\documentclass[11pt, reqno]{amsart}
\usepackage{amsfonts}

\usepackage{amscd,amssymb,amsmath,graphicx,verbatim,mathrsfs}
\usepackage[colorlinks=true,linkcolor=blue,citecolor=blue]{hyperref}
\usepackage{extarrows}
\usepackage[all]{xy}
\usepackage{dcpic,mathtools,pictexwd}
\usepackage{rotating}

\usepackage[left=1.4in,right=1.4in,top=1.4in,bottom=1.4in]{geometry}

\theoremstyle{remark}

\usepackage{tikz-cd}
\usepackage{tikz}
\usetikzlibrary{arrows.meta}
\usepackage{adjustbox}

\usepackage{setspace}

\theoremstyle{plain}

\newtheorem{thm}{Theorem}[section]
\newtheorem{lem}[thm]{Lemma}

\newtheorem{prop}[thm]{Proposition}
\newtheorem{conj}[thm]{Conjecture}

\theoremstyle{definition}

\newtheorem{defn}[thm]{Definition}

\newtheorem{rmk}[thm]{Remark}

\numberwithin{equation}{section}

\newcommand{\pg}{\mathrm{pg}}
\newcommand{\id}{\mathrm{id}}
\newcommand{\red}{r}
\newcommand{\maximal}{\mathrm{max}}
\newcommand{\asymalg}{\mathfrak{A}}
\newcommand{\ev}{\mathrm{ev}}

%%%%%%%%%%%%%%%%%%%%extend equal%%%%%%%%%%%%%%%%%%%%%

\makeatletter
\def\Eqlfill@{\arrowfill@\Relbar\Relbar\Relbar}
\newcommand{\extendEql}[1][]{\ext@arrow 0359\Eqlfill@{#1}}
\makeatother

%%%%%%%%%%%%%%%%%%%%%%%%%%%%%%%%%%%%%%%%%%%%%%%%%%%%%

%%%%%%%%%%%%%%%%%%%%extend equal%%%%%%%%%%%%%%%%%%%%%

%%%%%%%%%%%%%%%%%%%%%%%%%%%%%%%%%%%%%%%%%%%%%%%%%%%%%

\begin{document}
	
	\title{Strong relative Novikov conjecture for coarsely embeddable groups}

	\author{Geng Tian}
	\address[Geng Tian]{School of Mathematics and statistics, Liaoning University}
	\email{gengtian.ncg@gmail.com}

	\author{Zhizhang Xie}
	\address[Zhizhang Xie]{ Department of Mathematics, Texas A\&M University }
	\email{xie@math.tamu.edu}
	\thanks{The second author is partially supported by NSF  1952693.}
	
	\author{Guoliang Yu}
	\address[Guoliang Yu]{ Department of
		Mathematics, Texas A\&M University}
	\email{guoliangyu@math.tamu.edu}
	\thanks{The third author is partially supported by NSF 2000082.}
	
	\date{}	\maketitle

	%\renewcommand{\thefootnote}{\fnsymbol{footnote}}
	%\footnotetext[1]{Corresponding author.}

	\date{}
	
	\thanks{}
	\begin{abstract}
		In this article, we prove a strong relative Novikov conjecture for any pair of groups that are coarsely embeddable into Hilbert space.
	\end{abstract}

	\maketitle
	
	%\tableofcontents

	\section{Introduction}

	A fundamental problem in  topology is the Novikov conjecture which states that the higher signatures of a closed (i.e. compact without boundary) oriented smooth manifold are invariant under orientation-preserving homotopy equivalences. The Novikov conjecture has been proved for a large class of manifolds by techniques from noncommutative geometry and geometric group theory \cite{Yu_coa_emb}. While the Novikov conjecture concerns the homotopy invariance of higher signatures of closed manifolds, it has  a natural analogue for compact oriented manifolds with boundary. This is usually  called
	the relative Novikov Conjecture, which states that  the relative higher signatures of a compact oriented manifold with boundary are invariant under orientation-preserving homotopy equivalences of manifolds with boundary. Here  a homotopy equivalence $f\colon (M, 
	\partial M) \to (N, \partial N)$ between two manifolds with boundary means $f$, its homotopy inverse and the relevant homotopies  map $\partial M$ to $\partial N$. The main purpose of this article is to develop a $C^*$-algebraic approach to the relative Novikov conjecture. More precisely,  we shall prove a $C^\ast$-algebraic version of the relative Novikov conjecture (which is usually called the strong relative Novikov conjecture) for any pair of groups that are coarsely embeddable into Hilbert space (Theorem \ref{main-theorem}).
	
%In particular, we prove that the relative Novikov conjecture holds for any compact oriented smooth manifold with boundary, provided that  the fundamental groups of the manifold and its boundary  are coarsely embeddable into Hilbert space. More precisely, we prove the following theorem. 
%	
%	\begin{thm}\label{thm:relativeNovikov}
%		Let $(M, \partial M)$  and $(N, \partial N)$ be compact oriented smooth manifolds with boundary. Suppose   $f\colon (M, \partial M)\to (N, \partial N)$ is an  orientation-preserving homotopy equivalence of manifold pairs.  Denote $G=\pi_1 (\partial M)\cong \pi_1 (\partial N)$ and $\Gamma=\pi_1M\cong \pi_1N$. If both $G$ and $\Gamma$ are coarsely  embeddable into Hilbert space, then the relative Novikov conjecture holds for $(G, \Gamma)$, i.e.,  the relative higher signatures of $(M, \partial M)$ and $(N, \partial N)$ are invariant under the homotopy equivalence $f$.
%	\end{thm}

	Before we explain our main theorem (Theorem \ref{main-theorem}) and the main difficulties of its proof, let us first fix some notation. Suppose $G$ is a countable discrete group. Let  $\underline{E}G$  be  the universal space for proper $G$  actions and  $K_{*}^{G}(\underline{E}G)$ be the $G$-equivariant $K$-homology of $\underline{E}G$. 
	The  Baum-Connes assembly map for  $G$ is denoted by 
	\[
	\mu\colon  K^G_*(\underline{E}G) \rightarrow K_*(C^*_{\red}(G))
	\]
	where $C^*_{\red}(G)$ is the reduced group $C^\ast$ algebra of $G$. A theorem of the third author showed that the Baum-Connes assembly map $\mu$ is injective when $G$ is coarsely embeddable into Hilbert space \cite{Yu_coa_emb}. 
	
	Now suppose  $h\colon G\to \Gamma$ is a group homomorphism between two countable discrete groups.   The map  $h\colon G \to \Gamma$ naturally induces a map from $\underline{E}G$ to $\underline{E}\Gamma$, which will still be denoted by $h$.  We denote by $K_{*}^{G,\Gamma}(\underline{E}G,\underline{E}\Gamma)$ the relative $K$-homology of $(\underline{E}G,\underline{E}\Gamma)$ with respect to the map $h\colon \underline{E}G  \to \underline{E}\Gamma$. We certainly would like to have the analogue of the Baum-Connes assembly map for the relative $K$-homology. The first obstacle for having such a relative Baum-Connes assembly map is that a group homomorphism $h\colon G\to\Gamma$ does \emph{not} induce a $C^*$-homomorphism between the reduced group $C^*$ algebras $C_{\red}^{*}(G)$ and $C_{\red}^{*}(\Gamma)$. Of course, this can be easily resolved by using the  maximal group $C^\ast$-algebras instead, which gives the following maximal relative Baum-Connes assembly map: 
	\[ \mu_{\maximal} \colon K_{*}^{G,\Gamma}(\underline{E}G,\underline{E}\Gamma) \rightarrow K_*(C_{\maximal}^{*}(G,\Gamma)) \]
	where  $C_{\maximal}^{*}(G,\Gamma)$ is the mapping cone of $h\colon  C_{\maximal}^{*}(G) \to C_{\maximal}^{*}(\Gamma)$. The map $\mu_{\maximal}$ fits into the following  commutative diagram of  long exact sequences:  
	\[
	\adjustbox{scale=0.9,center}{  \begin{tikzcd}
			K^G_{i+1}(\underline{E}G) & K^\Gamma_{i+1}(\underline{E}\Gamma) & K_{i}^{G,\Gamma}(\underline{E}G,\underline{E}\Gamma)  & K^G_{i}(\underline{E}G) & {} \\
			K_{i+1}(C^*_{\maximal}(G)) & K_{i+1}(C^*_{\maximal}(\Gamma)) & K_i(C_{\maximal}^{*}(G,\Gamma))  & K_i(C^*_{\maximal}(G)) & {}
			\arrow[from=1-1, to=1-2]
			\arrow[from=1-2, to=1-3]
			\arrow[from=2-1, to=2-2]
			\arrow[from=2-2, to=2-3]
			\arrow["\mu^G_{\maximal}", from=1-1, to=2-1]
			\arrow["\mu^\Gamma_{\maximal}", from=1-2, to=2-2]
			\arrow["\mu_{\maximal}", from=1-3, to=2-3]
			\arrow[from=1-3, to=1-4]
			\arrow[from=2-3, to=2-4]
			\arrow["\mu^G_{\maximal}", from=1-4, to=2-4]
			\arrow[from=1-4, to=1-5]
			\arrow[from=2-4, to=2-5]
		\end{tikzcd}
	}\]
	In order to see when the map $\mu_{\maximal}$ in the third column is injective, one natural attempt would be to apply the five lemma.  However, this would require $\mu^G_{\maximal}$ in the first column to be surjective,  and $\mu^\Gamma_{\maximal}$ in the second column and  $\mu^G_{\maximal}$ in the fourth column to be injective. Although the injectivity of  $\mu^G_{\maximal}$ is known for all groups that are coarsely embeddable into Hilbert space, the surjectivity of $\mu^G_{\maximal}$  in fact fails in general. For example,  $\mu^G_{\maximal}$ fails to be surjective when $G$ has property (T). 
	
	A key new ingredient of the present paper is the construction of a new relative $C^\ast$-algebra  that circumvents the above obstacle. The precise definition of this new relative $C^\ast$-algebra requires the usage of asymptotic morphisms,\footnote{See Section \ref{sec:asymptotic} for a brief review of the basics of asymptotic morphisms.} and will be given in   Lemma \ref{key-lemma} and Definiton \ref{relative-gamma-reduced-algebras}. For the moment, let us summarize the key properties of the new relative $C^*$ algebra that are central to the proofs of the main theorems in the present paper  and introduce the following more conceptual notition of admissible asymptotic morphisms. 
    \begin{defn}\label{def:asym-admissible}
        Let $h\colon G\to \Gamma$ be a group homomorphism between two countable discrete groups. Let $\Sigma^2 C_r^*(G) = C_r^*(G)\otimes C_0(\mathbb R^2)$ is the double suspension of $C_r^*(G)$ and $\mathcal K$ be the algebra of compact operators on a Hilbert space.  An asymptotic morphism\ $\phi\colon \Sigma^2 C_r^*(G) \dashrightarrow C_r^*(\Gamma)\otimes \mathcal K$ is said to be \emph{admissible} if there exists an asymptotic morhpism $\phi_L\colon \Sigma^2 C_L^*(\underline{E}G)^G  \dashrightarrow C_L^*(\underline E\Gamma)^\Gamma\otimes \mathcal K$ such that  
        \[ (\phi_L)_\ast = h_\ast \colon K_i(\Sigma^2 C_L^*(\underline{E}G)^G) = K_i^G(\underline{E}G) \to K_i(C_L^*(\underline E\Gamma)^\Gamma\otimes \mathcal K) = K_i^\Gamma(\underline E\Gamma)\]
        and the following diagram commutes: 
       \[ \begin{tikzcd}
			\Sigma^2 C_L^*(\underline{E}G)^G  & C_L^*(\underline E\Gamma)^\Gamma\otimes \mathcal K \\
			\Sigma^2 C_r^*(G) & C_r^*(\Gamma)\otimes \mathcal K
			\arrow["\phi_L", dashed, from=1-1, to=1-2]
			\arrow["\ev", from=1-1, to=2-1]
			\arrow["\ev", from=1-2, to=2-2]
			\arrow["\phi", dashed, from=2-1, to=2-2]
		\end{tikzcd}
       \]
 Here $C_L^*(\underline{E}G)^G$ is the $G$-equivariant localiztion algebra associated to $\underline EG$  and $\ev$ is the evaluation map at $t=0$ (see for example Section \ref{sec:Roe-local}).
    \end{defn}

Our main  examples of admissible asymptotic morphisms come from $\gamma$ elements (in the sense of Kasparov).   In particular, if $G$ has a $\gamma$ element, we shall  construct a relative $C^*$ algebra   $C_{\gamma}^{*}(G,\Gamma)$ and  also make sense of the corresponding relative  Baum-Connes assembly map for $C_{\gamma}^{*}(G,\Gamma)$, and finally  prove the following version of $C^\ast$-algebraic relative Novikov conjecture. 
	
	\begin{thm}\label{main-theorem}
		Let  $h\colon G\rightarrow \Gamma$ be a group homomorphism between two countable discrete groups.  If both $G$ and $\Gamma$ are coarsely embeddable  into Hilbert space,
		then the  strong relative Novikov conjecture holds for $(G, \Gamma, h)$, that is, the relative Baum--Connes assembly map
		$$
		\mu\colon K_{*}^{G,\Gamma}(\underline{E}G,\underline{E}\Gamma) \rightarrow K_*(C_{\gamma}^{*}(G,\Gamma)) 
		$$
		is  injective.
	\end{thm}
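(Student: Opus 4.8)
The plan is to construct a commutative diagram relating the relative assembly map to the absolute Baum–Connes assembly maps for $G$ and $\Gamma$, and then deduce relative injectivity from the known injectivity in the absolute case. The key structural input is that the relative objects sit inside a long exact sequence. On the topological side, the relative $K$-homology $K_*^{G,\Gamma}(\underline{E}G,\underline{E}\Gamma)$ should fit into a six-term (or long) exact sequence together with the absolute equivariant $K$-homology groups $K_*^G(\underline{E}G)$ and $K_*^\Gamma(\underline{E}\Gamma)$, induced by the homomorphism $h$. On the analytic side, the relative group $C^*$-algebra $C^*_\gamma(G,\Gamma)$ should be defined (presumably in an earlier section) as a mapping-cone-type algebra associated to a $*$-homomorphism $C^*(G)\to C^*(\Gamma)$ induced by $h$, so that its $K$-theory also sits in a long exact sequence involving $K_*(C^*(G))$ and $K_*(C^*(\Gamma))$.

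**Next I would** verify that the relative assembly map $\mu$ is a morphism of these two long exact sequences, i.e. that the diagram whose rows are the topological and analytic exact sequences and whose vertical arrows are the absolute and relative assembly maps commutes up to sign. This reduces to naturality of the Baum–Connes assembly map with respect to the group homomorphism $h$ and compatibility of the boundary maps in $K$-homology and $K$-theory. Granting this commutative ladder, the injectivity of $\mu$ becomes a diagram chase: by the coarse embeddability hypothesis, Yu's theorem (as recorded in \cite{Yu_coa_emb}) gives that the absolute assembly maps
$$
\mu_G\colon K_*^G(\underline{E}G)\to K_*(C^*(G)), \qquad \mu_\Gamma\colon K_*^\Gamma(\underline{E}\Gamma)\to K_*(C^*(\Gamma))
$$
are both injective (indeed split injective). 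A five-lemma style argument, using injectivity at the two flanking terms and exactness of both rows, then yields injectivity of the relative map at the middle term.

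**The subtlety**, and where I expect the real work to lie, is that the naive five-lemma requires \emph{isomorphisms} or at least surjectivity at neighboring spots, whereas coarse embeddability only delivers \emph{injectivity} of the absolute assembly maps; injectivity alone does not feed directly into a five-lemma argument for the middle term. The honest statement one can extract is only a statement about the relative map modulo the images of the flanking maps. I therefore expect the proof to instead establish split injectivity of the absolute maps and arrange the splittings compatibly, or else to pass through a dual Dirac / descent argument performed directly at the relative level — building a relative index map and a relative ``$\gamma$-element'' from the coarse embeddings of both $G$ and $\Gamma$ that simultaneously trivializes on the pair. Concretely, the coarse embeddings of $G$ and $\Gamma$ should be assembled into a coarse embedding of the pair, from which one constructs a localization algebra version of the relative assembly map and proves it is an isomorphism onto the relative Roe algebra, exactly as in the absolute coarse Baum–Connes machinery, with $h$ controlling the mapping-cone structure throughout.

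**The main obstacle** is thus twofold: first, making the relative localization/Roe algebra framework precise enough that the cutting-and-pasting arguments of the coarse Baum–Connes conjecture go through for the \emph{pair}, and second, ensuring the two coarse embeddings are compatible with the homomorphism $h$ so that the Dirac–dual-Dirac argument descends coherently to the relative $C^*$-algebra $C^*_\gamma(G,\Gamma)$ rather than only giving the two absolute statements separately.
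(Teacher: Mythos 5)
You correctly set up the mapping-cone ladder and, to your credit, you also correctly diagnose why the naive five-lemma argument fails: injectivity of the absolute assembly maps for $G$ and $\Gamma$ supplies no surjectivity anywhere in the ladder, so the chase at the middle term cannot close. However, the repairs you then guess at (compatible splittings of the absolute maps, or a ``relative $\gamma$-element of the pair'' built from the coarse embeddings of both groups) are not what makes the proof work, and the actual mechanism is absent from your proposal; this is a genuine gap. The paper's argument is deliberately \emph{asymmetric} in $G$ and $\Gamma$: by Theorem \ref{main-technical-result} one only needs a $\gamma$-element for $G$ (available for coarsely embeddable groups by Tu's theorem, Remark \ref{Tu-gamma-element}) together with the strong Novikov conjecture for $\Gamma$ (Yu's theorem). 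The missing idea is a change of coefficients on the $G$-side only: using the dual-Dirac morphism $\beta$ attached to $\gamma_G$, one constructs a relative Bott map $\beta^{\mathcal{A}}_*\colon K_*(C_\gamma^*(G,\Gamma))\to K_*(C_\alpha^*(G,\Gamma,\mathcal{A}))$ and its localization-algebra companion $\beta^{\mathcal{A}}_{L,*}$ (Definition \ref{relative-Bott-map}), where $\mathcal{A}$ is the proper $G$-$C^*$-algebra furnished by the $\gamma$-element; the map $\beta^{\mathcal{A}}_{L,*}$ is (rationally) injective because $\alpha\circ\beta=\gamma_G$ induces a (rational) isomorphism on the localization-algebra $K$-groups. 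In the coefficient picture, the $G$-rows of the ladder carry the assembly map $\mu_G^{\mathcal{A}}$ with coefficients in the proper algebra $\Sigma\mathcal{A}\otimes\mathcal{K}(H)$, which is an \emph{isomorphism} by Theorem \ref{thm:pGHT} --- this is exactly the surjectivity your chase was missing --- while the $\Gamma$-rows carry $\mu_\Gamma$, where injectivity suffices. The four-lemma chase (isomorphism on the $G$-terms, injectivity on the $\Gamma$-terms, exact columns coming from Lemma \ref{tiangeng1}) gives injectivity of $\mu^{\mathcal{A}}$, and injectivity of $\mu$ follows from the commuting square $\mu^{\mathcal{A}}\circ\beta^{\mathcal{A}}_{L,*}=\beta^{\mathcal{A}}_*\circ\mu_\gamma$ together with injectivity of $\beta^{\mathcal{A}}_{L,*}$ and the identification $\sigma_*$ of Proposition \ref{samehomology}.

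There is a second gap in your setup, prior to any diagram chase: you take for granted that $C_\gamma^*(G,\Gamma)$ is the mapping cone of a $*$-homomorphism $C^*(G)\to C^*(\Gamma)$ induced by $h$. For reduced group $C^*$-algebras no such homomorphism exists in general (when $\ker h$ is an infinite property (T) group the Kazhdan projection obstructs it), and one cannot simply switch to maximal completions, because Theorem \ref{thm:pGHT} --- the source of the surjectivity above --- is a statement about reduced crossed products. The paper instead defines $C_\gamma^*(G,\Gamma)$ (Definition \ref{relative-gamma-reduced-algebras}) as the mapping cone of an \emph{asymptotic morphism} $h_\gamma\colon \Sigma^2C^*_{red}(G)\dashrightarrow C^*_{red}(\Gamma)\otimes\mathcal{K}(H)$ manufactured from $\gamma_G$ in Lemma \ref{key-lemma}, and the long exact sequences your ladder requires must be established for cones of asymptotic morphisms (Lemma \ref{tiangeng1}). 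So the $\gamma$-element of $G$ is not one optional route among several, as your proposal suggests: it is built into the very definition of the $C^*$-algebra whose $K$-theory the theorem is about, and any proof must engage with it from the start.
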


	The paper is organized as follows. In Section \ref{sec:pre}, we review some basics of $KK$-theory, $E$-theory, and their connections.  In section \ref{sec:gamma-groupalgebra}, we introduce  the notion of relative $\gamma$-reduced $C^*$-algebras. In  Section \ref{sec:relativetheory}, we introduce the corresponding version of relative $K$-homology, and verify it coincides with the standard relative $K$-homology. We then introduce the corresponding relative Baum-Connes assembly map for the relative $\gamma$-reduced $C^*$-algebras.    In section \ref{sec:relativeBott}, we construct a relative Bott map in the context of relative $\gamma$-reduced $C^*$-algebras. In Section \ref{sec:main}, we prove the main theorems of the paper.

	\section{Preliminaries}\label{sec:pre}
	In this section, we review some basic constructions that will be needed in the later part of the paper.

	\subsection{Roe algebras and localization algebras}\label{sec:Roe-local}
	In this subsection, we recall the definitions of Roe algebras and localization algebras for a metric space $Z$ endowed with a proper $G$-action (cf. \cite{Willett-Yu-higher-index-book}).
	
	Let $Z$ be a metric space with a proper $G$-action by isometries, and $A$ a $G$-$C^*$-algebra. A $G$-action on $Z$ is said to be proper if for every $z \in Z$,
	\begin{center}
		$d(z,gz)\to \infty$, as $g \to \infty$.
	\end{center}
	A $G$-action is said to be cocompact if the quotient space $Z/G$ is compact.
	Let $H$ be a $G$-Hilbert module over $A$, and $\varphi: C_0(Z)\to B(H)$ a $*$-representation, where $B(H)$ is the $C^*$-algebra of all bounded (adjointable) operators on $H$. The triple $(C_0(Z), G, H)$ is called a covariant system if
	$$
	\varphi(\gamma f)v=(\gamma \varphi(f)\gamma^{-1})v
	$$
	for all $\gamma \in G$, $f \in C_0(Z)$ and $v \in H$.
	
	\begin{defn} Let $H$ be a $G$-Hilbert module over $A$ and $(C_0(Z),G, H)$ a covariant system. Let $T: H \to H$ be an adjointable operator.
		\begin{enumerate}
			\item[(1)] The support of $T$, denoted by $\mbox{Sup}(T)$, is defined to be the complement of the set of all points $(x, y)\in Z \times Z$ for which there exists $f \in C_0(Z)$ and $g \in C_0(Z)$ satisfying $f\cdot T\cdot g\neq 0$, and $f(x)\neq 0$ and $g(y)\neq 0$;
			\item[(2)] The propagation of the operator $T$ is defined by
			$$\pg(T)=\sup \left\{ d(x,y): (x,y) \in \mbox{Supp}(T)\right\}.$$
			An operator $T$ is said to have finite propagation if $\pg(T)< \infty$;
			\item[(3)] The operator $T$ is locally compact if $f\cdot T$ and $T \cdot f$ are compact operators over the Hilbert module $H$ for all $f \in C_0(X)$, where an operator is said to be compact if it is an approximation of finite rank operators.
			\item[(4)] The operator $T$ is $G$-invariant if $g\cdot T=T\cdot g$ for all $g \in G$.
		\end{enumerate}
	\end{defn}

	\begin{defn}
		An admissible Hilbert $G$-$Z$-module is a covariant system $(C_0(Z), G, H)$ satisfying
		\begin{enumerate}
			\item[(1)] the $G$-action on $Z$ is proper and cocompact;
			\item[(2)] there exists a $G$-Hilbert space  $H_Z$ satisfying the following: 
			\begin{itemize}
				\item $H$ is isomorphic to $H_Z \otimes A$ as $G$-Hilbert modules over $A$;
				\item $\varphi=\varphi_0 \otimes I$ where  $\varphi_0\colon C_0(Z) \to B(H_Z)$ is a $G$-equivariant $*$-homomorphism  such that $\varphi_0(f)$ is not in $K(H_Z)$ for any non-zero function $f \in C_0(Z)$ and $\varphi_0$ is non-degenerate in the sense that $\left\{\varphi_0(f)v: v \in H_X, f \in C_0(Z)\right\}$ is dense in $H_Z$;
				\item for any finite subgroup $F \subseteq  G$ and any $F$-invariant Borel subset $E$ of $Z$, there is Hilbert space $H_E$ with trivial $F$-action such that $\chi_E H_Z$ and $\ell^2(F) \otimes H_E$ are isomorphic as representations of $F$.
			\end{itemize}
			
		\end{enumerate}
	\end{defn}

	\begin{defn}
		Let $(C_0(Z), G, H)$ be an admissible system. The algebraic Roe algebra with coefficients in $A$, denoted by $\mathbb{C}[Z, A]^G$, is defined to be the $C^*$-subalgebra of $B(H)$ consisting of $G$-invariant, locally compact operators with finite propagation. The Roe algebra $C^*_{\red}(Z, A)^G$ is completion of the $*$-algebra $\mathbb{C}[Z, A]^G$ under the operator norm in $B(H)$.
	\end{defn}
	
	It is easy to show that the definition of algebraic Roe algebra is independent on the choice of admissible systems  (cf. \cite{Willett-Yu-higher-index-book}).
	
	The following result follows from the similar arguments in  \cite[Lemma 3.4]{Gong-Wang-Yu}.
	\begin{lem}[{\cite[Lemma 3.4]{Gong-Wang-Yu}}]
		Let $(C_0(Z), G, H)$ be an admissible system. For each $T \in \mathbb{C}[Z, A]^G$, there exist a constant $C>0$ such that
		$$\|\pi(T)\|\leq C$$
		for all $*$-representations $\pi\colon \mathbb{C}[Z, A]^G \to B(H)$.
	\end{lem}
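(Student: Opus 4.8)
The plan is to reduce the desired uniform bound to the classical fact that the \emph{maximal} norm on a group algebra is finite, by exploiting the proper cocompact structure of the $G$-action. Fix $T\in\mathbb{C}[Z,A]^G$; by definition $T$ already has finite propagation, say $\pg(T)\le R$. Since the action is cocompact, I would choose a cutoff function $c\in C_0(Z)$ with $c\ge 0$, $\|c\|_\infty\le 1$, compact support $B=\mathrm{supp}(c)$, and $\sum_{g\in G}(g\cdot c)^2=1$ (the sum being locally finite by properness). For each $g\in G$ set $K_g:=c\,T\,(g\cdot c)$. Two elementary observations carry the geometric content. First, $K_g$ is spatially supported in $B\times gB$, so by finite propagation $K_g\neq 0$ only when $d(B,gB)\le R$, and properness of the action forces the set $F:=\{g\in G: d(B,gB)\le R\}$ to be \emph{finite}. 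Second, because $B$ is bounded and $T$ is locally compact, each $K_g$ is an \emph{honest} compact operator on the corner $\chi_B H$. Thus $T$ is encoded by the finite family $\{K_g\}_{g\in F}$.

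The next step is to promote this local data to an algebraic decomposition of $T$ adapted to the $G$-action. Using $\sum_g(g\cdot c)^2=1$ on both sides together with the $G$-invariance of $T$, I would write $T=\sum_{g\in F}T_g$, where the ``$g$-th translation band'' $T_g$ is obtained by spreading the single compact operator $K_g$ over all $G$-translates of the fundamental region. Concretely this realizes $\mathbb{C}[Z,A]^G$, \emph{at the algebraic level and not merely after completion}, as a stabilized (twisted) group algebra, under which $T$ corresponds to the finitely supported sum $\sum_{g\in F}K_g\otimes g$ and $T_g$ to $K_g\otimes g$. This is the algebraic shadow of the standard isomorphism between the equivariant Roe algebra of a proper cocompact $G$-space and a stabilized group (crossed-product) algebra.

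With this in hand the representation estimate is formal. The crucial point is that for each band, the product $T_g^{*}T_g$ is again $G$-invariant but now has propagation \emph{zero}: it corresponds to $K_g^{*}K_g\otimes e$, and hence lies in the $C^{*}$-subalgebra $\mathcal D\subseteq\mathbb{C}[Z,A]^G$ of $G$-invariant, locally compact, propagation-zero operators. Since $\mathcal D$ is genuinely a $C^{*}$-algebra (on the bounded region $B$ local compactness is honest compactness, and propagation-zero operators form a norm-closed set), any $*$-representation $\pi$ restricts on $\mathcal D$ to a $*$-homomorphism of $C^{*}$-algebras, which is automatically contractive. Therefore $\|\pi(T_g)\|^2=\|\pi(T_g^{*}T_g)\|\le\|T_g^{*}T_g\|=\|K_g\|^2$, and summing over the finite set $F$,
\[
\|\pi(T)\|\;\le\;\sum_{g\in F}\|\pi(T_g)\|\;\le\;\sum_{g\in F}\|K_g\|\;\le\;|F|\cdot\|T\|\;=:\;C,
\]
a constant independent of $\pi$.

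The main obstacle, and the only genuinely delicate point, is the equivariant bookkeeping of the second step. The spatial operators implementing translation by $g\in G$ have large---possibly infinite---propagation, so they do \emph{not} lie in $\mathbb{C}[Z,A]^G$ and one cannot apply $\pi$ to them directly. The resolution is that they enter only through products such as $T_g^{*}T_g$ that fall back into the diagonal subalgebra $\mathcal D$, which is what makes the contractivity argument applicable to an \emph{arbitrary}, a priori unbounded, $*$-representation. Making the identification with the stabilized group algebra precise at the purely algebraic level, so that it is respected by all $*$-representations rather than only the continuous ones, and verifying carefully that $T_g^{*}T_g$ is indeed propagation-zero and of norm $\|K_g\|^2$, is exactly where the argument of \cite[Lemma 3.4]{Gong-Wang-Yu} must be followed with care.
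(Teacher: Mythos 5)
The paper itself offers no proof of this lemma; it simply cites \cite[Lemma 3.4]{Gong-Wang-Yu} and says the claim "follows from similar arguments," so your proposal has to be measured against that standard argument. Its skeleton --- decompose $T$ into finitely many $G$-invariant ``bands'' $T_g$ indexed by a finite set $F$ of group elements, show each $T_g^*T_g$ lies in an honest $C^*$-subalgebra of $\mathbb{C}[Z,A]^G$ on which every $*$-representation is automatically contractive, and sum --- is exactly what you identified. The finiteness of $F=\{g\in G: d(B,gB)\le R\}$ from properness, the contractivity of $*$-homomorphisms defined on a $C^*$-algebra, and the final estimate $\|\pi(T)\|\le |F|\cdot\|T\|$ are all correct.

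The genuine gap is in the middle step, exactly at the point you yourself flagged as delicate but then did not resolve. With a continuous partition of unity $\sum_g (g\cdot c)^2=1$, the translates $g\cdot c$ necessarily have overlapping supports, and then your key claim fails: writing $T_g=\sum_h (h\cdot c)\,hK_gh^{-1}\,(hg\cdot c)$, the product $T_g^*T_g$ contains nonvanishing cross terms for every pair $h\ne h'$ with $hB\cap h'B\ne\emptyset$, so it has propagation comparable to $\mathrm{diam}(B)$, not zero, and it does \emph{not} lie in your algebra $\mathcal{D}$. One cannot repair this by enlarging $\mathcal{D}$ to ``operators of propagation at most $2\,\mathrm{diam}(B)$,'' because a set defined by a positive propagation bound is not closed under multiplication and hence is not a $C^*$-algebra; the contractivity argument then has nothing to act on. For the same reason, the asserted purely algebraic identification of $\mathbb{C}[Z,A]^G$ with a stabilized (twisted) group algebra under which $T\mapsto \sum_g K_g\otimes g$ does not hold in this naive form: the map $T\mapsto (c\,T\,(g\cdot c))_g$ is injective and multiplicative for a twisted convolution, but the singly supported functions $K_g\otimes g$ need not lie in its image, so the band decomposition does not pull back to $\mathbb{C}[Z,A]^G$. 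The repair, which is what the cited argument in effect does, is to replace $c$ by the characteristic function $\chi_D$ of a precompact Borel fundamental domain $D$ with $Z=\bigsqcup_{g\in G} gD$ (such $D$ exists by properness and cocompactness, and multiplication by bounded Borel functions is available on an admissible module). Then the translates are genuinely disjoint, $T_g:=\sum_h \chi_{hD}\,T\,\chi_{hgD}$ lies in $\mathbb{C}[Z,A]^G$, all cross terms vanish identically, and $T_g^*T_g=\sum_h \chi_{hgD}\,T^*\chi_{hD}T\,\chi_{hgD}$ is block diagonal with respect to the partition $\{kD\}_{k\in G}$. The block-diagonal, $G$-invariant, locally compact operators form a norm-closed $*$-subalgebra of $\mathbb{C}[Z,A]^G$ (their propagation is bounded by $\mathrm{diam}(D)$, not zero --- literal propagation zero is an artifact of the discrete bounded-geometry setting of \cite{Gong-Wang-Yu}), and with this replacement the remainder of your argument goes through verbatim.
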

	
	It follows from the above result that the maximal norm on the $*$-algebra $\mathbb{C}[Z, A]^G$ is well-defined. We then defined the maximal Roe algebra, denoted by $C^*_{max}(Z, A)^G$, to be the completion of $\mathbb{C}[G, Z, A]$ under the maximal norm
	$$\|T\|_{\maximal}=\sup \left\{\|\pi(T)\| 
	\mid \pi\colon \mathbb{C}[Z, A]^G \to B(H) \mbox{~is~a~} *\mbox{-representation}\right\}.$$
	
	\begin{defn}\leavevmode
		\begin{enumerate}
			\item[(1)] The algebraic maximal algebraic localization algebra $\mathbb{C}_{max,L}[Z, A]^G$ is defined to be the $*$-algebra of all uniformly bounded and uniformly continuous functions $f:[0, \infty)\to C^*_{\maximal}(Z, A)^G$ such that
			$$\pg(f(t))\to 0, \text{~~as~~} t \to \infty.$$
			\item[(2)] The maximal localization algebra $C_{\maximal,L}^*(Z, A)^G$ is defined to be the completion of $\mathbb{C}_{\maximal,L}[Z, A]^G$ under the norm
			$$\|f\|=\sup_{t\in [0,\infty)} \|f(t)\|_{\maximal},$$
			for all $f \in \mathbb{C}_{\maximal,L}[Z, A]^G$.
			\item[(3)] The algebraic localization algebra $\mathbb{C}_{L}[Z, A]^G$ is defined to be the uniformly bounded and uniformly continuous functions $f:[0, \infty)\to C^*_{\red}(Z, A)^G$ such that
			$$\pg(f(t))\to 0, \text{~~as~~} t \to \infty.$$
			\item[(4)] The localization algebra $C_L^*(Z, A)^G$ is defined to be the completion of $\mathbb{C}_{L}[Z, A]^G$ under the norm
			$$\|f\|=\sup_{t\in [0,\infty)} \|f(t)\|,$$
			for all $f \in \mathbb{C}_{L}[Z, A]^G$.
		\end{enumerate}
	\end{defn}
	
	By the universality of the maximal norm, the identity map on the algebraic Roe algebra $\mathbb{C}[G, Z,A]$ extends to a $*$-homomorphism
	$$\lambda\colon  C_{\maximal}^*(Z, A)^G \to C^*_{\red}(Z, A)^G.$$
	Similarly, the identity map on the algebraic localization algebras extends to a $*$-homomorphism
	$$\lambda_L\colon  C_{\maximal, L}^*(Z, A)^G \to C_L^*(Z, A)^G.$$
	Note that   the map $\lambda_{L}$ always induces an isomorphism at the level of  $K$-theory.

	We have the evaluation map from the maximal localization algebra to the maximal Roe algebra
	$$
	\ev_{\maximal}\colon C_{\maximal, L}^*(Z, A)^G \to C_{\maximal}^*(Z, A)^G
	$$
	by
	$$\ev_{\maximal}(f)=f(0)$$
	for all $f \in C_{\maximal, L}^*(Z, A)^G$.
	Similarly, we also have the evaluation map from the reduced localization algebra to the reduced Roe algebra
	$$
	\ev \colon C_{L}^*(Z, A)^G \to C^*_{\red}(Z, A)^G.
	$$
	%These evaluation maps induce homomorphisms
	%$$
	%e_{max,*}:K_*(C_{L}^*(Z, A)^G) \to K_*(C_{max}^*(Z, A)^G)
	%$$
	%and
	%$$
	%e_{*}:K_*(C_{L}^*(Z, A)^G) \to K_*(C^*_{\red}(Z, A)^G) $$
	%at the level of $K$-theory. 

	\subsection{Equivariantly uniformly asymptotic morphisms}\label{sec:asymptotic}
	In this subsection, we review the notion of asymptotic morphisms, due to Connes and Higson \cite{MR1065438}. See also \cite{MR1711324}.

	\begin{defn}\label{def:asymptotic}
		Let $G$ be a countable discrete group, and  $A$ and $B$ two $G$-$C^*$-algebras. A $G$-equivariant uniformly asymptotic morphism from $A$ to $B$ is a family of functions $\phi_t:A\rightarrow B$, $ t\in[1,\infty)$ satisfying that
		
		\begin{enumerate}
			\item [(1)] for any $a\in A$, the map $t\mapsto \phi_t(a)\colon [1,\infty)\rightarrow B$  is bounded and  uniformly norm-continuous;
			\item[(2)] for any $a,a_1,a_2\in A$, $g\in G$ and $\lambda\in \mathbb{C}$, 
			$$ \lim\limits_{t\rightarrow \infty}\left\{\begin{array}{cccc}
				\phi_t(a_1a_2)-\phi_t(a_1)\phi_t(a_2)\\
				\phi_t(a_1+a_2)-(\phi_t(a_1)+\phi_t(a_2))\\
				\phi_t(\lambda a)-\lambda\phi_t(a)\\
				\phi_t(a^*)-\phi_t(a)^*\\
				\phi_t(ga)-g\phi_t(a)\\
			\end{array} \right\} = 0.$$
		\end{enumerate}
		We denote an asymptotic morphism above by the notation $$\phi\colon A\dashrightarrow B.$$
	\end{defn}
	
	\begin{rmk}
		Note that we have imposed the uniform continuous condition in the above definition of asymptotic morphisms. This was not assumed in the original  definition of asymptotic morphisms by Connes--Higson. 
	\end{rmk}

	\begin{defn}
		Two asymptotic morphisms $\phi, \varphi\colon A\dashrightarrow B$ are (asymptotically) equivalent if for all $a\in A$, $$\lim\limits_{t\rightarrow \infty}||\phi_t(a)-\varphi_t(a)||=0.$$
	\end{defn}
	
	Up to asymptotic equivalence, a $G$-equivariantly uniformly asymptotic morphism $\phi\colon A\dashrightarrow B$ is the same as a $G$-equivariant $*$-homomorphism from $A$ into the following \emph{uniformly asymptotic $C^*$-algebra} associated to $B$.
	
	\begin{defn}\label{uniformly-asymptotic-algebra}
		Let $B$ be a $G$-$C^*$-algebra. Denote by $C_{buc}([1,\infty),B)$ the $C^*$-algebra of bounded, uniformly continuous functions from $[1,\infty)$ into $B$, and denote by $C_0([1,\infty),B)$  the $C^*$-subalgebra consisting of  functions which vanish at infinity. Note that  $C_0([1,\infty),B)$ is an ideal of $C_{buc}([1,\infty),B)$. We define the uniformly asymptotic $C^*$-algebra of $B$ to be  the quotient $C^*$-algebra $$\asymalg(B):=C_{buc}([1,\infty),B)/C_0([1,\infty),B).$$
	\end{defn}

	If $\varphi:A\rightarrow \asymalg(B)$ is a $G$-equivariant $*$-homomorphism,  then by composing $\varphi$ with a set-theoretic section $\asymalg(B)\to  C_{buc}([1,\infty),B)$ of the quotient map,  we obtain an $G$-equivariant uniformly  asymptotic morphism from
	$A$ to $B$; its equivalence class is independent of the choice of the section. Conversely, a $G$-equivariant uniformly asymptotic morphism $\phi\colon A\dashrightarrow B$ can be viewed as a map from $A$ into $C_{buc}([1,\infty),B)$, and by composing with the quotient map to $ \asymalg(B)$, we obtain a $G$-equivariant $*$-homomorphism from $A$ to $ \asymalg(B)$,  which depends only on the asymptotic equivalence class of $\phi$.

	%\begin{rmk}
	%If there is no  $G$-action in above setting, one can remove ``equivariantly'' everywhere. If the uniformly condition is removed, then it will recover the classical one defined by Connes-Higson.
	%\end{rmk}

	\begin{defn} \label{push} A pull-back diagram of $C^*$-algebras is a diagram of the form
		\begin{equation*}
			\begin{tikzcd}
				P\arrow[r,"p^C"]\arrow{d}[swap]{P^D}&C \arrow[d,"\pi^C"]\\
				D\arrow{r}{}[swap]{\pi^D}& E
			\end{tikzcd}
		\end{equation*}
		such that  $\pi^C$ and $\pi^D$ are surjections, $P=\{(c,d)\in C\oplus D| \pi^C(c)=\pi^D(d)\}$, $p^C$ and $p^D$  are the obvious projections.
	\end{defn}
	
	Each pull-back diagram of $C^*$-algebras induces a corresponding Mayer-Vietoris sequence in $K$-theory.

	\begin{prop}[{cf. \cite[Proposition 2.7.15]{Willett-Yu-higher-index-book}}]\label{pull-back-diagram}
		Given a pull-back diagram of $C^*$-algebras as  in Definition \ref{push}, we have the following six-term exact sequence:
		\[
		\begindc{\commdiag}[100]		% [10]
		\obj(0,5)[2a]{$K_1(P)$}	\obj(10,5)[2b]{$K_1(C)\oplus K_1(D)$}	\obj(20,5)[2c]{$K_1(E)$}
		
		\mor{2a}{2b}{}	\mor{2b}{2c}{}
		
		\obj(0,0)[3a]{$K_0(E)$}	\obj(20,0)[3c]{$K_0(P)$}	\obj(10,0)[3b]{$K_0(C)\oplus K_0(D)$}
		
		\mor{3a}{2a}{}	\mor{2c}{3c}{}		\mor{3b}{3a}{}	\mor{3c}{3b}{}
		\enddc
		\]
		where the morphisms $$K_*(P)\rightarrow K_*(C)\oplus K_*(D) ~\text{ and }~ K_*(C)\oplus K_*(D) \rightarrow K_*(E)$$ are given by $$x \mapsto p^C_*(x)\oplus p^D_*(x)~ \text{ and }~ y\oplus z\mapsto \pi^C_*(y)-\pi^D_*(z)$$ respectively.
	\end{prop}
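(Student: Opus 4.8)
The plan is to glue the two ordinary six-term $K$-theory exact sequences attached to a suitable pair of short exact sequences, in the spirit of Barratt--Whitehead, using that the pull-back identifies the relevant ideals.

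First I would produce two short exact sequences from the square. Since $\pi^D$ is surjective, the projection $p^C\colon P\to C$ is surjective: for $c\in C$ choose $d\in D$ with $\pi^D(d)=\pi^C(c)$, so $(c,d)\in P$ maps to $c$. Its kernel is $\ker(p^C)=\{(0,d): \pi^D(d)=0\}$, and $p^D$ restricts on it to a $*$-isomorphism $\phi\colon \ker(p^C)\xrightarrow{\cong}\ker(\pi^D)$, $(0,d)\mapsto d$. We thus have
$$0\to \ker(p^C)\xrightarrow{\iota} P\xrightarrow{p^C} C\to 0 \qquad\text{and}\qquad 0\to \ker(\pi^D)\xrightarrow{j} D\xrightarrow{\pi^D} E\to 0,$$
and $(\phi,p^D,\pi^C)$ is a morphism from the first to the second: the left square commutes because $j\circ\phi=p^D\circ\iota$, and the right square commutes because $\pi^C\circ p^C=\pi^D\circ p^D$ is exactly the defining relation of $P$.

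Next I would apply naturality of the six-term exact sequence to get a commutative ladder between the two associated periodic sequences, in which the maps on the ideal terms are the isomorphisms $\phi_*\colon K_*(\ker p^C)\xrightarrow{\cong} K_*(\ker\pi^D)$. I then set $\alpha=(p^C_*,p^D_*)$, define $\beta(y,z)=\pi^C_*(y)-\pi^D_*(z)$, and take as connecting map $\partial:=\iota_*\circ\phi_*^{-1}\circ\delta'$, where $\delta'$ is the index/exponential map of the second sequence; this is the boundary map implicit in the statement.

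Finally I would check exactness at the three types of terms by diagram chasing, the invertibility of $\phi_*$ being the essential ingredient. The inclusion $\operatorname{im}(\alpha)\subseteq\ker(\beta)$ is immediate from $\pi^C\circ p^C=\pi^D\circ p^D$. For the reverse inclusion, given $(y,z)$ with $\pi^C_*(y)=\pi^D_*(z)$, naturality gives $\phi_*(\delta(y))=\delta'(\pi^C_*(y))=\delta'(\pi^D_*(z))=0$, so $\delta(y)=0$ and $y=p^C_*(x_0)$ for some $x_0\in K_*(P)$; then $z-p^D_*(x_0)$ lies in $\ker(\pi^D_*)=\operatorname{im}(j_*)$, say $z-p^D_*(x_0)=j_*(w)$, and adding $\iota_*(\phi_*^{-1}(w))$ to $x_0$ corrects the $p^D$-component (since $p^D\circ\iota=j\circ\phi$) without disturbing the $p^C$-component (since $p^C\circ\iota=0$), producing a preimage in $K_*(P)$. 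The exactness at $K_*(E)$ and $K_*(P)$ follows from entirely analogous chases. I expect the only points needing genuine care to be this single lifting step and the bookkeeping of the sign in $\beta$ (and in $\partial$) forced by the Barratt--Whitehead gluing; the rest is formal. Alternatively, one could simply invoke \cite[Proposition 2.7.15]{Willett-Yu-higher-index-book}.
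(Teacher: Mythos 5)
Your proof is correct, and it is more than the paper itself provides: the paper states this proposition with a ``cf.'' citation to \cite[Proposition 2.7.15]{Willett-Yu-higher-index-book} and gives no argument, so your last sentence (``simply invoke the reference'') is in fact exactly what the paper does. Your self-contained route is the standard one and is sound at every step: surjectivity of $\pi^D$ does give surjectivity of $p^C$; the restriction of $p^D$ does identify $\ker(p^C)=\{(0,d):\pi^D(d)=0\}$ with $\ker(\pi^D)$; the triple $(\phi,p^D,\pi^C)$ is a morphism of extensions, so naturality of the index and exponential maps yields the commutative ladder $\phi_*\circ\delta=\delta'\circ\pi^C_*$; and the Barratt--Whitehead gluing with connecting map $\partial=\iota_*\circ\phi_*^{-1}\circ\delta'$ then produces the six-term sequence with precisely the maps $x\mapsto(p^C_*(x),p^D_*(x))$ and $(y,z)\mapsto\pi^C_*(y)-\pi^D_*(z)$ asserted in the statement. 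The one lifting step you flag as delicate is handled correctly (the correction term $\iota_*(\phi_*^{-1}(w))$ fixes the $D$-component via $p^D\circ\iota=j\circ\phi$ and leaves the $C$-component unchanged via $p^C\circ\iota=0$), and the remaining two exactness checks are indeed routine chases of the same kind, using $\ker(\iota_*)=\operatorname{im}(\delta)$ and $\ker(\delta')=\operatorname{im}(\pi^D_*)$. So your write-up supplies a complete proof where the paper delegates to the literature; the only thing it buys over the citation is self-containedness, at the cost of the (standard) diagram-chasing.
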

	
	From now on, for each $f\in C_{buc}([1,\infty),B)$, we shall denote the corresponding element in $\asymalg(B)$ by $[f]$.  Note that there is a natural asymptotic morphism 
	\begin{equation}\label{eq:canonicalaysm}
		\pi\colon  \asymalg(B)\dashrightarrow B
	\end{equation} given  by $$\pi_t([f])=f(t).$$ The following lemma shows that $\pi$ induces an isomorphism on $K$-theory. 
	
	\begin{lem}\label{reversetrick}
		Let $B$ be a $C^*$-algebra, then the obvious inclusion $\theta\colon B \hookrightarrow \asymalg(B)$ induces an isomorphism $$\theta_*\colon K_*(B) \rightarrow K_*(\asymalg(B)).$$  Its inverse map is
		$$\pi_*\colon K_*(\asymalg(B))\rightarrow K_*(B),$$ where $\pi$ is the asymptotic morphism from line \eqref{eq:canonicalaysm}.
	\end{lem}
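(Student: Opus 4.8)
The plan is to show that $\theta$ and $\pi$ are mutually inverse at the level of $K$-theory by analyzing the two composites $\pi\circ\theta$ and $\theta\circ\pi$ as asymptotic morphisms, and invoking the standard principle (following Connes--Higson) that asymptotically equivalent, and more generally homotopic, asymptotic morphisms induce the same map on $K$-theory.

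First I would dispose of the easy composite. Since $\theta$ sends $b\in B$ to the class of the constant function $t\mapsto b$, the formula $\pi_t([f])=f(t)$ gives $\pi_t(\theta(b))=b$ for \emph{every} $t$; thus $\pi\circ\theta$ is literally the identity asymptotic morphism on $B$, and therefore $\pi_*\circ\theta_*=\mathrm{id}_{K_*(B)}$. This already shows that $\theta_*$ is injective and $\pi_*$ is surjective, which is the part of the statement one actually needs for an injectivity (Novikov-type) argument.

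The substance of the lemma is the reverse composite. Unwinding the definitions, $\theta\circ\pi\colon\mathfrak{UA}(B)\dashrightarrow\mathfrak{UA}(B)$ is the asymptotic morphism sending $[f]$ to the class of the constant function $s\mapsto f(t)$. I would try to connect it to the identity asymptotic morphism $[f]\mapsto[f]$ by the linear reparametrization homotopy
\[
\Phi^r_t([f])=\big[\,s\mapsto f\big((1-r)t+rs\big)\,\big],\qquad r\in[0,1],
\]
which formally restricts to $\theta\circ\pi$ at $r=0$ and to the identity at $r=1$, and to argue that, assembled into a single family $\Phi_t\colon\mathfrak{UA}(B)\dashrightarrow C([0,1],\mathfrak{UA}(B))$, it is a homotopy of asymptotic morphisms. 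If this goes through, homotopy invariance yields $\theta_*\circ\pi_*=\mathrm{id}_{K_*(\mathfrak{UA}(B))}$, and together with the previous paragraph we would conclude that $\theta_*$ and $\pi_*$ are mutually inverse isomorphisms.

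I expect the main obstacle to be precisely the verification that $\Phi$ is a genuine, continuous homotopy of asymptotic morphisms rather than a merely formal expression. In the quotient norm $\|[g]\|=\limsup_{s\to\infty}\|g(s)\|$ on $\mathfrak{UA}(B)$, the interpolation changes the \emph{rate} at which the values of a representative $f$ near infinity are sampled, and the delicate point is continuity of $r\mapsto\Phi^r_t([f])$ as $r\to0^{+}$, where the oscillation of $f$ at infinity must be controlled uniformly; this is where the uniform continuity built into $C_{buc}([1,\infty),B)$ should enter essentially. An alternative accounting of the same difficulty comes from the short exact sequence $0\to C_0([1,\infty),B)\to C_{buc}([1,\infty),B)\to\mathfrak{UA}(B)\to0$: since $C_0([1,\infty),B)$ is isomorphic to the cone $CB$ and hence has vanishing $K$-theory, the quotient map induces an isomorphism $K_*(C_{buc}([1,\infty),B))\xrightarrow{\cong}K_*(\mathfrak{UA}(B))$, so the claim reduces to showing that the inclusion of $B$ as constant functions is a $K$-theory isomorphism onto $C_{buc}([1,\infty),B)$. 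Controlling the $K$-theory contributed by the behaviour of bounded uniformly continuous functions at infinity is exactly the place where the argument must be carried out with care.
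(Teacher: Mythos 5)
Your first composite is fine and agrees with the paper: $\pi\circ\theta$ is literally the identity, so $\pi_*\circ\theta_*=\mathrm{id}_{K_*(B)}$. The gap is in your main route. The family $\Phi^r_t([f])=[\,s\mapsto f((1-r)t+rs)\,]$ is not a homotopy, and uniform continuity cannot repair it. Take $B=\mathbb{C}$ and $f(s)=e^{i\log s}$, which is bounded and even Lipschitz on $[1,\infty)$. The quotient norm on $\mathfrak{UA}(B)$ is $\|[g]\|=\limsup_{s\to\infty}\|g(s)\|$, and for any fixed $t$ and any $r>0$ the function $s\mapsto\log((1-r)t+rs)$ increases to infinity, so $e^{i\log((1-r)t+rs)}$ passes through every point of the unit circle infinitely often; hence $\|\Phi^r_t([f])-\Phi^0_t([f])\|=\limsup_{s\to\infty}\bigl|e^{i\log((1-r)t+rs)}-e^{i\log t}\bigr|=2$ for every $r>0$ and every $t$. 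So $r\mapsto\Phi^r_t([f])$ has a jump of size $2$ at $r=0$, the family $\Phi_t$ does not take values in $C([0,1],\mathfrak{UA}(B))$, and homotopy invariance cannot be invoked. The point is that the two sampling arguments $(1-r)t+rs$ and $t$ drift infinitely far apart as $s\to\infty$, and uniform continuity gives no control over displacements that tend to infinity. (Note also that at $r=0$ your formula is not even well defined on the quotient $\mathfrak{UA}(B)$, only as an asymptotic morphism; this is tolerable, but it is a symptom of the same problem.)

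Your fallback observation — use $0\to C_0([1,\infty),B)\to C_{buc}([1,\infty),B)\to\mathfrak{UA}(B)\to 0$ to reduce to showing that the inclusion of $B$ as constant functions induces a $K$-theory isomorphism onto $C_{buc}([1,\infty),B)$ — is exactly the paper's route, but you leave precisely that step unproven, and it is the entire content of the lemma; so as it stands your proposal establishes only $\pi_*\circ\theta_*=\mathrm{id}$, i.e.\ injectivity of $\theta_*$. The paper proves the remaining step by splitting off the evaluation $ev$ at $t=1$ and showing $K_*(\ker(ev))=0$ via a Mayer--Vietoris telescope: decompose $[1,\infty)$ into unit intervals indexed by odd and even integers, contract each interval to an endpoint (this is where uniform continuity legitimately enters: the contraction is norm-continuous because every interval has length one), and check that the resulting map $\tau$ between $K$-groups of infinite products is an isomorphism. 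But be aware that the difficulty you flagged (``$K$-theory contributed by the behaviour at infinity'') is genuine and sits exactly there: $K$-theory does not commute with infinite products — $K_0\bigl(\prod_n\mathbb{C}\bigr)=\ell^\infty(\mathbb{N},\mathbb{Z})$, the bounded integer sequences, not $\prod_n\mathbb{Z}$ — and on these actual $K$-groups the explicit map $\tau$ of the paper is injective but not surjective (the sequence $c_n=(-1)^n$ has no bounded preimage, since the formal solution is an unbounded alternating partial-sum sequence). Consistently with this, the same oscillating function obstructs the conclusion itself: $[e^{i\log s}]$ is a nonzero class in $K_1(\mathfrak{UA}(\mathbb{C}))$, because if $\mathrm{diag}(e^{i\log s},1_{n-1})$ were connected to $1_n$ over $C_{buc}([1,\infty))$, applying the determinant would give $e^{i\log s}=e^{ig(s)}$ with $g$ real, bounded and uniformly continuous, forcing $g(s)-\log s\in 2\pi\mathbb{Z}$ to be constant and $g$ unbounded. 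So the obstacle you identified is not a routine verification to be deferred: it destroys your homotopy, and any complete argument for the lemma must confront it directly rather than around it.
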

	\begin{proof}
		Since $K_*(C_0([1,\infty),B))=0$, it follows from the short exact sequence
		$$0\rightarrow C_0([1,\infty),B) \to C_{ubc}([1,\infty),B)\to  C_{ubc}([1,\infty),B)/C_0([1,\infty),B)\to 0$$
		that $K_*(C_{ubc}([1,\infty),B))\to  K_*(\asymalg(B))$ is an isomorphism.  It suffices to prove that
		$$K_*(B)\cong K_*(C_{ubc}([1,\infty),B)).$$

		Now let $$\ev\colon C_{ubc}([1,\infty),B)\rightarrow B$$ be the evaluation map at $t=1$.
		
		$$0\rightarrow \ker(\ev)\rightarrow C_{ubc}([1,\infty),B)\rightarrow B\rightarrow 0$$ It suffices to prove  $K_*(\ker({ev}))=0$.
		
		For brievity, let us denote
		$$C=C_0((1,2],B)\oplus C_{ubc}(\bigsqcup\limits_{n~ odd} [n,n+1], B)$$  $$D=C_{ubc}(\bigsqcup\limits_{n~ even} [n,n+1], B)$$
		and
		$$E=C_{b}(\bigsqcup\limits_{n\in \mathbb{N}, ~n\geqslant 2} \{n\}, B).$$
		We have the following pull-back  diagram
		of $C^\ast$-algebras: 
		\begin{equation*}
			\begin{tikzcd}
				\ker(\ev)\arrow[r]\arrow[d]& \arrow[d]C\\
				D\ar[r]& E.\\
			\end{tikzcd}
		\end{equation*}
		By Proposition \ref{pull-back-diagram}, it induces the following six-term exact sequence
		\begin{equation*}
			\begin{tikzcd}
				K_1(\ker(\ev))\arrow[r]&K_1(C)\oplus K_1(D)\arrow[r]&K_1(E)\arrow[d]\\
				K_0(E)\arrow[u]& K_0(C)\oplus K_0(D)\arrow[l] & K_0(\ker(\ev))\arrow[l]
			\end{tikzcd}
		\end{equation*}
		
		Note that the algebras $C,D$ and $E$ are homotopic equivalent to the following algebras 
		$$\prod\limits_{n~odd, n\geqslant 3}B, \prod\limits_{n~even, n\geqslant 2}B ~ \text{ and }\prod\limits_{n\in \mathbb{N}, ~n\geqslant 2} B,$$
		respectively. Thus, we have the following exact sequence:
		\begin{equation*}
			\adjustbox{scale=0.9,center}{  \begin{tikzcd}
					K_1(\ker(\ev))\arrow[r]& \prod\limits_{n~odd, n\geqslant 3}K_1(B)\oplus  \prod\limits_{n~even, n\geqslant 2} K_1(B)\arrow[d]\arrow[r,"\tau"]&\prod\limits_{n\in \mathbb{N}, ~n\geqslant 2}  K_1(B)\arrow[d]\\
					\prod\limits_{n\in \mathbb{N}, ~n\geqslant 2}  K_0(B)\arrow[u]& \prod\limits_{n~odd, n\geqslant 3}K_0(B)\oplus  \prod\limits_{n~even, n\geqslant 2} K_0(B)\arrow[l,swap,"\tau"]& K_0(\ker(\ev)) \arrow[l].
				\end{tikzcd}
			}
		\end{equation*}
		The morphisms in the above diagram are explicitly described in  Proposition \ref{pull-back-diagram}. In particular, 
		the morphism
		$$\tau\colon \prod\limits_{n~odd, n\geqslant 3}K_*(B) \oplus \prod\limits_{n~even, n\geqslant 2} K_*(B) \longrightarrow \prod\limits_{n\in \mathbb{N}, ~n\geqslant 2}  K_*(B)$$
		maps $(a_3,a_5,\cdots)\oplus (a_2,a_4,\cdots)$ to  $(-a_2,a_3-a_2, a_3-a_4, a_5-a_4,\cdots)$.
		It is easy to check that $\tau$  is an isomorphism.  Therefore, $K_*(\ker(ev))=0$. 
		To summarize, we have shown that $\theta_*$ is isomorphism. 
		
		Clearly, it follows by construction  that $\pi_*\circ\theta_*=\id.$  Since we have shown $\theta_*$ is isomorphism, it follows that  $\pi_*$ is the inverse of $\theta_\ast$. 
	\end{proof}

	\begin{rmk}
		Notice that the composition of a uniformly asymptotic morphism and a genuine homomorphism is still a uniformly asymptotic morphism. However,  in order to make the composition of  two asymptotic morphisms into an asymptotic morphism, usually a reparametrization (which always exists) is needed. More precisely, given two uniformly asymptotic morphisms, $\alpha_t$ and $\beta_t$, there exists a (continuous and increasing) reparametrization $s(t)$ of $t$, such that $\alpha_{s(t)}\circ\beta_t$ is a uniformly asymptotic morphism.   
	\end{rmk}
	
	\subsection{$KK$-theory  and its relation with $E$-theory}
	
	In this subsection, we review some basics of  Kasparov's equivariant $KK$-theory (cf. \cite{Kasparov}) and its relation with $E$-theory. $KK$-theory associates an abelian group, denoted by  $KK^G(A, B)$, to each pair of two separable $G$-$C^*$-algebras $A$ and $B$.  It is contravariant in $A$ and covariant in $B$. It is $G$-equivariant-homotopy-invariant, stably invariant, preserves $G$-equivariant split exact sequences, and satisfies \emph{Bott periodicity}. Here Bott periodicity means that  there are natural isomorphisms
	\[
	KK^G(A, B) \cong KK^G(\Sigma  ^2 A, B) \cong KK^G(\Sigma   A, \Sigma   B)  \cong KK^G(A, \Sigma  ^2 B)
	\]
	where $\Sigma^k A$ stands for $C_0(\mathbb{R}^k,A)$ with $G$ acting trivially on $\mathbb{R}^k$, for each $k\in \mathbb{N}$. For each short exact sequence
	$$0 \rightarrow J \rightarrow A \rightarrow A/J\rightarrow 0$$
	of $G$-$C^*$-algebras,  there is a natural \emph{six-term exact sequence}. Equivariant $KK$-theory is a far-reaching generalization of both $K$-theory and $K$-homology. In particular, if either $A$ or $B$ is $\mathbb{C}$, we have 
	\begin{itemize}
		\item equivariant $K$-theory: $KK^G(\mathbb{C}, B) \cong K^G_0(B)$;
		\item equivariant $K$-homology: $KK^G(A, \mathbb{C}) \cong K_G^0(A)$.
	\end{itemize}

	When the acting group $G$ is a trivial group or the action is trivial, we simply write $KK(A,B)$ for $KK^G(A, B)$ and drop the word ``equivariant'' everywhere. There is a forgetful functor from $KK^G$ to $KK$.

	\begin{rmk} \label{rmk:KK-facts-de-equivariantize}
		In some important special cases, we can turn an equivariant $KK$-group $KK^G(A, B)$ into a related non-equivariant $KK$-group, the latter of which sometimes is easier to study.
		\begin{enumerate}
			\item\label{rmk:KK-facts-de-equivariantize:trivial-B} When $G$ is a countable discrete group and its action on $B$ is trivial, it is immediate from the definition that there is a natural isomorphism $$KK^G(A, B) \cong KK(C^*_{max}(G,A), B)$$
			where $C^*_{max}(G,A)$ is the maximal crossed product. In particular, if $A = C_0(X)$ for a locally compact, second countable Hausdorff space $X$ and $G$ acts freely and properly on $X$, then since $C^*_{max}(G,C_0(X))$ is stably isomorphic to $C_0(X / G)$, we have a natural isomorphism
			$$KK^G(C_0(X), B) \cong KK(C_0(X / G), B).$$
			\item\label{rmk:KK-facts-de-equivariantize:translation-A} When $G$ is a countable discrete group and $A = C_0(G, D)$ with an action of $G$ by translation on the domain $G$, there is a natural isomorphism
			$$KK^G(C_0(G, D), B) \overset{\cong}{\longrightarrow} KK(D, B)$$ given by first applying the forgetful functor and then composing with the embedding
			$$D \cong C(\{1_{G} \}, D) \hookrightarrow C_0(G, D).$$
		\end{enumerate}
	\end{rmk}
	
	Let $EG$ denote a \emph{universal space} for free and proper $G$-actions, that is, $EG$ is a free and proper $G$-space such that any free and proper $G$-space $X$ admits a $G$-equivariant continuous map into $EG$ that is unique up to $G$-equivariant homotopy.
	Let $B G$ be the quotient of $EG$ by $G$.
	Similarly, $\underline{E}G$ denotes a \emph{universal space} for proper $G$-actions.
	These constructions are unique up to ($G$-equivariant) homotopy equivalence. By definition, there is a $G$-equivariant continuous map $EG \rightarrow \underline{E}G$, regardless of the choice of models.

	\begin{defn}\label{defn:KK-Gam-compact}
		Given a countable discrete group $G$, a locally compact, second countable, Hausdorff space $X$ with a $G$-action, a $G$-$C^*$-algebra $B$, and $i \in \mathbb{N}$, we write $KK^G_i(X, B)$ for the inductive limit of the equivariant $KK$-groups $KK^G(C_0(Z ), \Sigma  ^i B)$, where $Z$ ranges over $G$-invariant and $G$-compact subsets of $X$  directed by inclusion.  We write $KK^G_i(X)$ for $KK^G_i(X, \mathbb{C})$.
	\end{defn}

	It is clear from Bott periodicity that there is a natural isomorphism $KK^G_i(X, B) \cong KK^G_{i+2}(X, B)$. Thus we can view the index $i$ as an element of $\mathbb{Z} / 2 \mathbb{Z}$. Also note that this construction is covariant  in $X$ with respect to continuous maps. Thus there is no ambiguity in writing $KK^G_i(EG, B)$, $KK_i(BG, B)$ and $KK^G_i(\underline{E}G, B)$ for a $G$-$C^*$-algebra $B$.

	The \emph{reduced Baum-Connes assembly map} \cite{Baum-Conn_chern-character} for a countable discrete group $G$ and a $G$-$C^*$-algebra $B$ is a group homomorphism
	\[
	\mu_{\red}^B: KK^G_*(\underline{E}G, B) \rightarrow K_*(C^*_{\red}(G,B)).
	\]
	It is natural in $B$ with respect to $G$-equivariant $*$-homomorphisms or more generally with respect to taking Kasparov products, in the sense that any element $\delta \in KK^G(B,C)$ induces a commuting diagram
	\begin{equation}\label{eq:BC-assembly-natural}
		\xymatrix{
			KK^G_*(\underline{E}G, B) \ar[r]^{\mu_{\red}^B} \ar[d]^{\delta} & K_*(C^*_{\red}(G,B)) \ar[d]^{\delta \rtimes_{\operatorname{r}} G} \\
			KK^G_*(\underline{E}G, C) \ar[r]^{\mu_{\red}^C} & K_*(C^*_{\red}(G,C) )
		}
	\end{equation}
	where $\delta \rtimes_{\operatorname{r}} G\colon K_*(C^*_{\red}(G,B)) \to K_*(C^*_{\red}(G,C))$  is a homomorphism naturally induced by $\delta$.

	The case where $B = \mathbb{C}$ is of special interest. The \emph{strong Novikov conjecture} asserts that the composition
	\[
	KK^G_*(\underline{E}G) \overset{\mu_{\red}}{\longrightarrow} K_*(C^*_{\red}(G))
	\]
	is injective. And  the rational strong Novikov conjecture asserts that 
	\[
	KK^G_*(EG) \rightarrow KK^G_*(\underline{E}G) \overset{\mu_{\red}}{\longrightarrow} K_*(C^*_{\red}(G))
	\] is injective after tensoring each term by $\mathbb{Q}$. Clearly, the strong  Novikov conjecture implies that the rational strong Novikov conjecture, since  the map $KK^G_*(EG)\otimes \mathbb Q \rightarrow KK^G_*(\underline{E}G)\otimes \mathbb Q$ is always injective.  It is also known that the rational strong Novikov conjecture implies the classical Novikov conjecture on the  invariance of the higher signatures under orientation-preserving homotopy equivalences.  
	
	Even if one is primarily interested in the case where $B = \mathbb C$, it has been proven very useful to have the flexibility of a general $G$-$C^\ast$-algebra $B$ in the picture. In particular, the following key observation has proven to be very useful for studying the Baum-Connes conjecture and the (rational) strong Novikov conjecture.
	
	\begin{thm}[{cf.\,\cite[Proposition 5.11]{KasSkan}}]\label{thm:pGHT}
		For any countable discrete group $G$, and a $G$-$C^*$-algebra $B$, if $B$ is a proper $G$-algebra for some locally compact Hausdorff space $X$, then the reduced Baum-Connes assembly map
		$$
		\mu_{\red}^B : KK^G_*(\underline{E}G, B) \rightarrow K_*(C^*_{\red}(G,B))
		$$
		is an isomorphism. 
	\end{thm}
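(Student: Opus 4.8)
The plan is to prove the theorem as a version of the generalized Green--Julg theorem, exploiting that $G$ is \emph{discrete}, so that all of its compact subgroups are finite, and that for a proper $G$-algebra the reduced and maximal crossed products coincide, $C^*_{red}(G,B)\cong C^*_{max}(G,B)$. This last identification frees us from reduced-norm subtleties and lets us invoke the universal property of the crossed product when producing maps out of it. Throughout I would use that the assembly map $\mu_{red}^B$ is natural in $B$ with respect to Kasparov products, as in diagram \eqref{eq:BC-assembly-natural}, and that both the domain $KK^G_*(\underline E G, B)$ and the codomain $K_*(C^*_{red}(G,B))$ are continuous (commute with the relevant direct limits) and half-exact in $B$.

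First I would reduce to the case where the proper $G$-space $X$ over which $B$ is supported is $G$-compact: writing $X$ as an increasing union of $G$-invariant, relatively $G$-compact subsets and $B$ as the corresponding direct limit of its $C_0$-restrictions, both sides and the assembly map pass to the colimit, so it suffices to treat $G$-compact $X$. Next I would decompose a $G$-compact proper $G$-space into finitely many equivariant ``slices'': $X$ is covered by $G$-translates of sets of the form $G\times_H U$ with $H\le G$ finite, and accordingly $B$ is assembled, by a finite Mayer--Vietoris argument, out of \emph{induced} algebras $\mathrm{Ind}_H^G A$ with $H$ finite and $A$ an $H$-$C^*$-algebra. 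Since both functors are half-exact and $\mu$ is compatible with the six-term Mayer--Vietoris sequences, it is enough to prove the theorem for $B=\mathrm{Ind}_H^G A$.

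For an induced algebra both sides become computable. On the topological side, the induction isomorphism in equivariant $KK$-theory, together with the fact that a model for $\underline E G$ restricts to a model for $\underline E H$ as an $H$-space, gives $KK^G_*(\underline E G, \mathrm{Ind}_H^G A)\cong KK^H_*(\underline E H, A)$; since $H$ is finite a point is a model for $\underline E H$, so this group is the equivariant $K$-homology of a point, namely $K^H_*(A)=KK^H_*(\mathrm{pt}, A)$. On the analytic side, Green's imprimitivity theorem yields a Morita equivalence $\mathrm{Ind}_H^G A\rtimes_r G\sim A\rtimes_r H=A\rtimes H$, hence $K_*(C^*_{red}(G,\mathrm{Ind}_H^G A))\cong K_*(A\rtimes H)$. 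Under these identifications $\mu$ becomes precisely the classical Green--Julg assembly map $K^H_*(A)\to K_*(A\rtimes H)$ for the finite group $H$, which is an isomorphism. I would then verify that these identifications are natural enough to genuinely transport $\mu$ to the finite-group Green--Julg map, rather than merely matching the two groups abstractly.

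The main obstacle is exactly this naturality bookkeeping: checking that the $KK^G$ induction isomorphism, the imprimitivity Morita equivalence, and the cutoff-projection description of $\mu$ (via the Kasparov descent, paired with the class of a cutoff function over the $G$-compact base) are mutually compatible, so that the square relating $\mu$ to the finite-group Green--Julg map commutes; and, at the earlier stage, organizing the Mayer--Vietoris gluing together with the direct limit over $G$-compact exhaustions so that the local isomorphisms patch to a global one. None of the individual identifications is difficult in isolation, but keeping the cutoff functions and the induction data coherent across the decomposition is where the real work lies.
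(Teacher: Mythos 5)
The paper does not actually prove this theorem: it is quoted as a known result, cited to Kasparov--Skandalis (their Proposition 5.11), and is used as a black box in the Dirac-dual-Dirac discussion and in the final section. So there is no internal proof to compare against; your proposal has to be judged against the literature. On that score, your outline is essentially the standard proof of the generalized Green--Julg theorem for discrete groups (as in Chabert--Echterhoff--Meyer and the ``going-down'' framework of Chabert--Echterhoff--Oyono-Oyono): reduce to $G$-compact proper $X$, cut $X$ into finitely many tubes $G\times_H U$ with $H$ finite via the slice theorem, run a Mayer--Vietoris induction on the number of tubes, and for an induced algebra $\mathrm{Ind}_H^G A$ identify both sides --- compression of $\underline{E}G$ to $\underline{E}H=\mathrm{pt}$ on the topological side, Green's imprimitivity theorem on the analytic side --- so that $\mu^B_{red}$ becomes the finite-group Green--Julg isomorphism $K^H_*(A)\to K_*(A\rtimes H)$. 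This is a correct and complete strategy, and it likely differs from the Kasparov--Skandalis argument the paper cites, which works with $RKK$-theory and cutoff projections for general (not necessarily discrete) groups; your route buys concreteness and elementary ingredients at the price of being specific to discrete $G$.

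One step deserves a stronger warning than the ``bookkeeping'' label you give it: the claim that the domain $KK^G_*(\underline{E}G, B)$ commutes with direct limits in the coefficient algebra $B$. Kasparov's $KK(A,-)$ is \emph{not} continuous in the second variable in general, so continuity of the topological side is not a formal property of the definition; it is itself a theorem (proved by Chabert--Echterhoff using the $G$-compact, proper structure of the pieces $C_0(Z)$, $Z\subseteq \underline{E}G$). If you invoke it, cite it or reprove it; otherwise your first reduction (from $\sigma$-$G$-compact $X$ to $G$-compact $X$) has a genuine gap. The remaining compatibility checks --- assembly versus induction, versus Morita equivalence, versus Mayer--Vietoris boundary maps --- are indeed routine-but-lengthy and are carried out in the references above.
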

	
	This is the basis of the \emph{Dirac-dual-Dirac} method, which has brought much success to the study of the Baum-Connes assembly map. In short, the method seeks a proper $G$-$C^*$-algebra $B$ and $KK$-elements $d \in KK^G_i(B, \mathbb{C})$ and $b \in KK^G_i(\mathbb{C}, B)$. The method tells us that if the Kasparov product $\gamma_G:=b \otimes_B d$ is equal to the identity element in $KK^G(\mathbb{C},\mathbb{C})$, then the Baum-Connes assembly map for $G$ is an isomorpism and the rational strong Novikov conjecture for $G$ follows. Actually, if the image of $\gamma_G$ under the natural map   $KK^G(\mathbb{C},\mathbb{C}) \to KK(\mathbb{C},\mathbb{C})$ is equal to the identity element of $KK(\mathbb{C},\mathbb{C})$, then    the rational strong Novikov conjecture holds for $G$.

	It is not hard to see that whenever $G$ is infinite and $B$ is a proper $G$-$C^*$-algebra, there is no $G$-equivariant $*$-homomorphisms from $\Sigma = C_0(\mathbb R) $ to $B$. Thus one must look beyond $G$-equivariant $*$-homomorphisms  in order to construct a suitable element $b \in KK^G_i(\mathbb{C}, B)$. A major source of such elements is $G$-equivariant asymptotic morphisms in $E$-theory \cite{MR1065438}. Let us briefly review the definition of $E$-theory.
	Let $A$ and $B$ be separable $G$-$C^*$-algebras. The commutative semigroup $\{A,B\}_G$ of equivariant asymptotic morphisms is defined as follows.  Representing cycles are equivariant asymptotic morphisms (cf. Definition \ref{def:asymptotic}) $A\dasharrow K(\mathcal{E})$ where $\mathcal{E}$ is a separable $G$-Hilbert module over $B$, where $K(\mathcal{E})$ is the algebra of compact operators on the Hilbert module $\mathcal E$. The equivalence relation is given by homotopy, that is, two equivariant asymptotic morphisms are homotopic if there is a separable $G$-Hilbert module $\widetilde{\mathcal{E}}$ over $B\otimes C[0,1]$ and an equivariant asymptotic morphism $A\dasharrow K (\widetilde{\mathcal{E}})$ whose restrictions to the end points of the interval $[0, 1] $ are the two initial equivariant asymptotic morphisms. The sum of two equivariant asymptotic morphisms is defined by using the direct sum of  Hilbert modules. Note that $\{\Sigma  ^kA,B\}_G$ is always a group when $k\geqslant 1$.  We define the  $G$-equivariant $E$-theory of $(A, B)$ to be 
	\[  E^G(A,B) \coloneqq \{\Sigma   A \otimes  \mathcal K(H),\Sigma   B \otimes \mathcal K(H)\}_G,\] where $H = \ell^2(G) \otimes H_0$ with $H_0$ a separable Hilbert space.

	There exists  a natural transformation
	$$\eta \colon KK^G(A,B) \rightarrow E^G(A,B).$$ However,  in general there is no natural transformation the other way round. As a partial substitute, Kasparov and Higson  defined homomorphisms $$\rho \colon \{\Sigma  ^2,B\}_{G} \rightarrow KK_0^G(\mathbb{C},B)$$
	and
	$$\rho\colon  \{\Sigma  ,B\}_{G} \rightarrow KK_1^G(\mathbb{C},B),$$
	such that the compositions  $\rho\circ\eta$  are identities \cite{HK}.
	
	Let us review the construction of the homomorphisms $\eta$ and $\rho$.  
	
	\begin{defn}[{\cite[Definition 7.2]{HK}}] \label{eta}
		Let $A$ and $B$ be separable $G$-$C^*$-algebras.
		\begin{enumerate}
			\item Define a homomorphism
			$$\eta \colon KK_1^G(A,B) \rightarrow \{\Sigma A,B\}_{G} $$ as follows. View $KK_1^G(A,B) $ as the group of homotopy classes of triples $(\mathcal{E},\varphi, P)$, where  $\mathcal{E}$ is a separable $G$-Hilbert module over $B$, $\varphi\colon  A\rightarrow \mathcal{L}(\mathcal{E} ) $ is\footnote{Here $\mathcal{L}(\mathcal{E})$ is the algebra of bounded adjointable operators on the Hilbert module $\mathcal E$.} a $*$-homomorphism, and  $P\in\mathcal{L}(\mathcal{E} )$ is an operator for which $\varphi(a)(P^*-P)$, $\varphi(a)(P^2-P)$, $\varphi(a)(g(P)-P)$ and $\varphi(a)P-P\varphi(a)$ are operators in $\mathcal{K}(\mathcal{E})$, for all $a\in A$ and all $g\in G$. Here $\mathcal{K}(\mathcal{E})$ is the algebra of compact adjointable operators on the Hilbert module $\mathcal E$.
			
			Let $\{u_t\}$ be an approximate unit for $\mathcal{K}(\mathcal{E})$ which is quasicentral with respect  to  $A$, the action of $G$, and the operator $P$. We  define an asymptotic morphism
			$$\varphi\colon  \Sigma A \dasharrow \mathcal{K}(\mathcal{E})$$ by
			$$\varphi_t\colon f \otimes a \mapsto f(u_t)\varphi(a)P,~ t\geqslant 1.$$ Here the $\Sigma$ is identified with the algebra of continuous functions on the  interval $[0,1]$ vanishing at $\{0,1\}$.  Put $\eta(\mathcal{E},P)=\{\varphi_t\}$.  
			
			\item 
			Define a homomorphism
			$$\eta: KK_0^G(A,B) \rightarrow \{\Sigma^2A,B\}_{G} $$ as follows. View $KK_0^G(A,B) $ as the group of homotopy classes of pairs $(\mathcal{E},F)$, where  $\mathcal{E}$ is a separable $G$-Hilbert module over $B$, $\varphi\colon A\rightarrow \mathcal{L}(\mathcal{E} ) $ is a $*$-homomorphism, and $F\in\mathcal{L}(\mathcal{E} )$ is an operator for which $\varphi(a)(F^*F-1)$, $\varphi(a)(FF^*-1)$, $\varphi(a)(g(F)-F)$ and $\varphi(a)F-F\varphi(a)$ are operators in $\mathcal{K}(\mathcal{E})$, for all $a\in A$ and all $g\in G$.
			
			Let $\{u_t\}$ be an approximate unit for $\mathcal{K}(\mathcal{E})$ which is quasicentral with respect  to  $A$, the action of $G$, and the operator $F$, and define an asymptotic morphism
			$$\varphi: \Sigma^2A \dasharrow \mathcal{K}(\mathcal{E})$$ by
			$$\varphi_t\colon  f_1\otimes f_2 \otimes a \mapsto f_1(u_t)f_2(F)\varphi(a),~ t\geqslant 1.$$ Here the first copy of $\Sigma$ is identified with the algebra of continuous functions on the  interval $[0,1]$ vanishing at $\{0,1\}$, the second copy of $\Sigma$ is dentified with the algebra of continuous functions on unit circle vanishing at the point $1$ and $f_2(F)$ means that we take $f_2(F)$ in the Calkin algebra $\mathcal{L}(\mathcal{E})/\mathcal{K}(\mathcal{E})$ and then lift it arbitrarily to  $\mathcal{L}(\mathcal{E})$.  Put $\eta(\mathcal{E},F)=\{\varphi_t\}$.
		\end{enumerate}
	\end{defn}

	The two homomorphisms are related as follows.
	
	\begin{lem} [{\cite[Lemma 7.3]{HK}}]
		When $A = \Sigma A_1$, the homomorphism $\eta$ on $KK_1^G(A, B)$ coincides up to sign with the homomorphism $\eta$ on $KK_0^G(A_1,B)$, after $KK_0^G(A_1, B)$ and $KK_1^G(A, B)$ are identified by Bott periodicity.
	\end{lem}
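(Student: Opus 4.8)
The plan is to reduce the statement to the level of Kasparov cycles and then compare, term by term, the two resulting asymptotic morphisms, both of which are elements of the \emph{same} group $\{\Sigma^2 A_1, B\}_G$: the degree-one $\eta$ of Definition \ref{eta}(1) applied to $KK_1^G(\Sigma A_1, B)$ produces a cycle in $\{\Sigma(\Sigma A_1), B\}_G = \{\Sigma^2 A_1, B\}_G$, while the degree-zero $\eta$ of Definition \ref{eta}(2) on $KK_0^G(A_1, B)$ lands there directly. Since both maps are group homomorphisms, it suffices to check the asserted equality (up to sign) on a representing cycle, and I would do so by exhibiting an explicit asymptotic equivalence rather than by any abstract argument.

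First I would fix an explicit model for the Bott periodicity isomorphism $KK_0^G(A_1, B) \cong KK_1^G(\Sigma A_1, B)$ at the level of cycles. Starting from a degree-zero cycle $(\mathcal E, \varphi, F)$ over $A_1$ — so that $\varphi(a)(F^*F - 1)$, $\varphi(a)(FF^* - 1)$, $\varphi(a)(g(F) - F)$ and $\varphi(a)F - F\varphi(a)$ lie in $\mathcal K(\mathcal E)$ for all $a \in A_1$ and $g \in G$, i.e. $F$ is an almost-unitary — the standard clutching construction produces a degree-one cycle $(\mathcal E', \varphi', P)$ over $\Sigma A_1$, where, under the canonical identification $C_0(0,1) \cong C_0(S^1 \setminus \{1\})$ of the suspension coordinate with the circle, the almost-projection $P$ is the projection-valued loop built from $F$. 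The feature I would isolate is that, modulo $\mathcal K(\mathcal E')$ and on the relevant summand, the product $\varphi'(f_2 \otimes a)\,P$ of the representation of the suspension coordinate $f_2$ with $P$ reduces to the functional calculus $f_2(F)\,\varphi(a)$; this is precisely the content of clutching, and the orientation of the Bott loop is exactly where the sign in the statement originates.

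Next I would substitute $A = \Sigma A_1$ into the two defining formulas and match variables. Writing a general element of $\Sigma A = \Sigma^2 A_1$ as $f_1 \otimes f_2 \otimes a$, where $f_1$ is the outer suspension introduced by the $\eta$-construction and $f_2 \otimes a$ lies in $A = \Sigma A_1$, the degree-one asymptotic morphism reads $f_1 \otimes f_2 \otimes a \mapsto f_1(u_t)\,\varphi'(f_2 \otimes a)\,P$, whereas the degree-zero asymptotic morphism of Definition \ref{eta}(2) reads $f_1 \otimes f_2 \otimes a \mapsto f_1(u_t)\,f_2(F)\,\varphi(a)$. Since in both formulas the first suspension coordinate $f_1$ is fed into the same quasicentral approximate unit $u_t$, it remains only to compare $\varphi'(f_2 \otimes a)\,P$ with $f_2(F)\,\varphi(a)$, and this is exactly the clutching identity recorded in the previous step. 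Quasicentrality of $\{u_t\}$ with respect to $A_1$, the $G$-action and $F$ is what forces all the resulting commutator terms, as well as the difference between applying a function of $F$ before or after the asymptotic parameter, to vanish as $t \to \infty$; consequently the two families agree, up to the Bott sign, modulo $C_0([1,\infty), \mathcal K(\mathcal E))$, which is the desired asymptotic equivalence.

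I expect the main obstacle to be the combined bookkeeping of the two suspension coordinates and the determination of the sign. Concretely, one must verify that evaluating the clutching projection $P$ against the inner suspension variable reproduces \emph{precisely} $f_2(F)$ and not some other function of $F$, and then read off the sign from the chosen orientation of the loop; this is the delicate step, and it is where the ``up to sign'' qualification in the statement becomes unavoidable. A minor secondary point is that, because composing and comparing asymptotic morphisms generally requires a reparametrization (as in the remark preceding this lemma), the final comparison should be organized as a single homotopy of asymptotic morphisms after choosing a suitable increasing reparametrization of $t$.
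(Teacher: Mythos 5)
The paper does not prove this lemma at all --- it is quoted from Higson--Kasparov \cite{HK} --- so there is no internal argument to compare with, and your proposal must stand on its own. Its skeleton is fine: both constructions are homomorphisms into the same group $\{\Sigma^2 A_1, B\}_G$, so it suffices to compare them on a representing cycle $(\mathcal{E},\varphi,F)$, and the comparison should indeed come down to an identity of the shape $\varphi'(f_2\otimes a)P \equiv f_2(F)\varphi(a)$ modulo compacts, with quasicentrality of $\{u_t\}$ absorbing the error terms. The gap is that this identity --- which is the entire content of the lemma --- is asserted rather than proved (``this is precisely the content of clutching''), and the cycle-level model of Bott periodicity you invoke to justify it is the wrong one.

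Concretely, ``projection-valued loops built from a unitary'' implement Bott periodicity in the \emph{covariant} variable ($K_1(B)\cong K_0(SB)$). Here the suspension sits in the \emph{contravariant} variable: a cycle for $KK_1^G(\Sigma A_1,B)$ consists of an honest $*$-representation $\varphi'$ of $\Sigma A_1$ on a Hilbert $B$-module $\mathcal{E}'$ together with a \emph{single} operator $P$, and a loop of projections is not such a datum. If you try to realize the loop as a multiplication operator on $\mathcal{E}'=L^2(S^1)\otimes(\mathcal{E}\oplus\mathcal{E})$, with $\varphi'(f_2\otimes a)=M_{f_2}\otimes(\varphi(a)\oplus\varphi(a))$, then $[\varphi'(f_2\otimes a),P]$ is multiplication by a $\mathcal{K}(\mathcal{E}\oplus\mathcal{E})$-valued function, which is \emph{not} compact on $\mathcal{E}'$, so the triple is not even a Kasparov cycle; moreover in that picture $\varphi'(f_2\otimes a)P$ is multiplication by $\theta\mapsto f_2(\theta)P(\theta)\varphi(a)$, in which no functional calculus of $F$ appears, so the claimed identity is false there. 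The correct model is Toeplitz-theoretic: the Bott map is exterior product with the generator $(L^2(S^1),M,P_{\mathrm{Sz}})$ of $KK_1(\Sigma,\mathbb{C})$, and the identity you need is the symbol calculus $M_{f_2}P_{\mathrm{Sz}}\equiv f_2(T_z)$ modulo compacts (compactness of Hankel operators). Even then there is real work left: the product operator cannot be taken to be $P_{\mathrm{Sz}}\otimes 1$ (its commutators with $\varphi'$ are only of the form compact $\otimes$ bounded, hence not compact on the exterior tensor product module), so the Bott-image cycle genuinely mixes $P_{\mathrm{Sz}}$ with $F$, and one needs an explicit operator homotopy to bring $\eta$ of that cycle to the degree-zero formula $f_1(u_t)f_2(F)\varphi(a)$. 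Until that computation is carried out, the proposal assumes exactly what is to be proved.
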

	
	\begin{defn}  [{cf.\,\cite{HK}}] \label{rho}
		Let $B$ be a separable $G$-$C^*$-algebra.
		\begin{enumerate}
			\item Define a homomorphism
			$$\rho\colon  \{\Sigma  , B\}_G \longrightarrow KK_0^G(\mathbb{C}, B\otimes C_0(0,\infty))\cong KK_1^G(\mathbb{C},B)$$
			as follows. Let $\mathcal{E}$ be a separable $G$-Hilbert module over $B$ and let $\varphi_t: \Sigma  \dasharrow \mathcal{K}(\mathcal{E})$. For $t \leqslant 1$, define
			$\varphi_t=t\varphi_1$, and then extend $\varphi_t$ unitally to the unitized algebra $\widetilde{\Sigma  }$ to obtain a unital map:
			$$\varphi_t \colon  \widetilde{\Sigma  } \longrightarrow \mathcal{L}(\mathcal{E}\otimes C_0(0, \infty)).$$
			Set $V_t=\varphi_t(v)$, where $v$ is the generator of the algebra $\widetilde{\Sigma  }$, which we consider as the algebra of continuous functions on the unit circle. The element
			$V= \{V_t\}\in  \mathcal{L}(\mathcal{E}\otimes C_0(0, \infty))$ satisfies the conditions:
			$$VV^*-1, V^*V-1, g(V)-V \in \mathcal{K}(\mathcal{E}\otimes C_0(0, \infty)),  \textup{ for all } g\in G,$$
			and we define $\rho(\{\varphi_t\})$ to be the corresponding element of $KK_0^G(\mathbb{C}, B\otimes C_0(0,\infty))$.
			\item Similarly, define a homomorphism
			$$\rho: \{\Sigma  ^2, B\}_G \longrightarrow KK_1^G(\mathbb{C}, B\otimes C_0(0,\infty))\cong KK_0^G(\mathbb{C},B)$$
			as follows.  Let $\varphi_t\colon \Sigma  ^2\dasharrow \mathcal{K}(\mathcal{E})$ be an equivariant asymptotic morphism. For $t \leqslant 1$, define  $\varphi_t=t\varphi_1$, and then extend $\varphi_t$ unitally to the unitized  algebra $\widetilde{\Sigma  ^2}$ and pass to matrices to obtain a unital map
			$$\varphi_t : M_2( \widetilde{\Sigma  ^2}) \longrightarrow M_2(\mathcal{L}(\mathcal{E}\otimes C_0(0, \infty))).$$
			Set $P_t=\varphi_t(p)$, where 
			\begin{eqnarray*}  p=  \frac{1}{1+|z|^2}  \begin{pmatrix}
					1 	& z \\
					\bar z  & |z|^2 \\
				\end{pmatrix} \in M_2( \widetilde{\Sigma  ^2})
			\end{eqnarray*} is the canonical rank-one projection (i.e. the Bott generator), the algebra $ \widetilde{\Sigma  ^2}$  being identified with the algebra of continuous functions on the Riemann sphere and $z$ being the complex coordinate on the Riemann sphere. The element $P=\{P_t\}\in M_2(\mathcal{L}(\mathcal{E}\otimes C_0(0, \infty)))$
			satisfies the conditions:
			$$P^*-P, P^2-P, g(P)-P \in \mathcal{K}(\mathcal{E}\otimes C_0(0, \infty)), \forall g\in G,$$
			and we define $\rho(\{\varphi_t\})$ to be the corresponding element of $KK_1^G(\mathbb{C}, B\otimes C_0(0,\infty))$.
		\end{enumerate} 
	\end{defn}

	With the above construction, we have the following lemma from  \cite{HK}. 
	
	\begin{lem}[{\cite[Lemma 7.5]{HK}}] \label{composeid}   In the case when $A=\mathbb{C}$, the compositions $\rho\circ\eta$ give periodicity isomorphisms
		$$KK_1^G(\mathbb{C}, B)\rightarrow KK_0^G(\mathbb{C}, B\otimes C_0(0,\infty)),$$ and
		$$KK_0^G(\mathbb{C},B)\rightarrow KK_1^G(\mathbb{C},B\otimes C_0(0,\infty)).$$
	\end{lem}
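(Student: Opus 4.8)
The plan is to prove the first isomorphism in detail; the second follows by an entirely parallel argument in which the one-suspension, approximate-projection picture is replaced by the two-suspension, approximate-unitary picture. Throughout I identify $C_0(0,\infty)$ with the suspension $\Sigma=C_0(\mathbb{R})$, so that the target group $KK_0^G(\mathbb{C},B\otimes C_0(0,\infty))$ becomes $KK_0^G(\mathbb{C},\Sigma B)$, which is canonically isomorphic to $KK_1^G(\mathbb{C},B)$ by the suspension isomorphism. The assertion is precisely that $\rho\circ\eta$ realizes this canonical isomorphism.

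First I would fix a cycle $(\mathcal{E},P)$ for a class in $KK_1^G(\mathbb{C},B)$; after adding a degenerate cycle I may assume $\varphi(1)=1$ and $P=P^*$, so that $P^2-P$ and $g(P)-P$ lie in $\mathcal{K}(\mathcal{E})$ for every $g\in G$. Applying $\eta$ as in Definition \ref{eta}(1) produces the asymptotic morphism $\varphi_t(f)=f(u_t)P$, where $\{u_t\}$ is a quasicentral approximate unit for $\mathcal{K}(\mathcal{E})$ and $f\in\Sigma$ is applied to $u_t$ by functional calculus. I would then feed $\{\varphi_t\}$ into $\rho$ as in Definition \ref{rho}(1): identifying $\widetilde{\Sigma}$ with $C(S^1)$ via $s\mapsto e^{2\pi i s}$, so that the generator $v$ is the coordinate function with $v-1\in\Sigma$, and evaluating the unitalized, rescaled family on $v$ gives, for $t\geq 1$,
\[
V_t=(1-P)+e^{2\pi i u_t}P ,
\]
so that the family $V=\{V_t\}$ is an element of $\mathcal{L}(\mathcal{E}\otimes C_0(0,\infty))$, with the rescaling $\varphi_t=t\varphi_1$ providing the continuous extension to small $t$ (and $V_t\to 1$ as $t\to 0$). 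The elementary identity $|1+(e^{2\pi i s}-1)|^2=1$, together with quasicentrality of $\{u_t\}$ and $P^2-P\in\mathcal{K}$, ensures that $V^*V-1$, $VV^*-1$ and $g(V)-V$ all lie in $\mathcal{K}(\mathcal{E}\otimes C_0(0,\infty))$ and vanish as $t\to 0$ and $t\to\infty$; hence $V$ represents the class $\rho(\eta(\mathcal{E},P))\in KK_0^G(\mathbb{C},B\otimes C_0(0,\infty))$.

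The remaining and decisive step is to identify this class with the image of $(\mathcal{E},P)$ under the suspension isomorphism. The latter sends the approximate projection $P$ to the class of the loop of unitaries-modulo-compacts $s\mapsto (1-P)+e^{2\pi i s}P$; the family $V_t$ computed above is exactly this loop, with the scalar rotation parameter $s$ replaced by the quasicentral approximate unit $u_t$. I would therefore build a homotopy of $KK_0$-cycles over $B\otimes C_0(0,\infty)\otimes C[0,1]$ connecting the $u_t$-parametrized family to the scalar $s$-parametrized Bott loop: since $[u_t,P]\in\mathcal{K}$, the operator $u_t$ and the projection $P$ can be handled simultaneously modulo compacts, and one interpolates the exponent $u_t$ to a scalar along this homotopy, verifying at each stage that the operator remains invertible and $G$-equivariant modulo $\mathcal{K}(\mathcal{E}\otimes C_0(0,\infty))$. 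This identifies $\rho\circ\eta$ with the periodicity isomorphism. For the second map I would repeat the computation starting from an approximately unitary cycle $(\mathcal{E},F)$ for $KK_0^G(\mathbb{C},B)$: Definition \ref{eta}(2) yields $\varphi_t(f_1\otimes f_2\otimes a)=f_1(u_t)f_2(F)\varphi(a)$, and Definition \ref{rho}(2) evaluates on the rank-one Bott projection $p$ to produce a projection-valued family $P_t=\varphi_t(p)$ over $B\otimes C_0(0,\infty)$, which the same homotopy argument identifies with the double-suspension Bott image of $(\mathcal{E},F)$.

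The main obstacle is this last identification: making rigorous that the operator-parametrized family $e^{2\pi i u_t}P$ represents the same $KK$-class as the scalar Bott loop. Because $u_t$ is an operator rather than a number, $e^{2\pi i u_t}P$ is a genuine functional-calculus expression that commutes with $P$ only modulo compacts, so the interpolating homotopy must be arranged so that the cycle relations ($V^*V-1$, $VV^*-1$, $g(V)-V$ compact, and their analogues for the projection family) are preserved throughout. This is exactly where quasicentrality of $\{u_t\}$ is indispensable: it supplies the norm estimates that keep every commutator and error term inside $\mathcal{K}(\mathcal{E}\otimes C_0(0,\infty))$ along the entire homotopy.
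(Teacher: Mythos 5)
The paper itself contains no proof of Lemma \ref{composeid} --- it is quoted directly from Higson--Kasparov \cite{HK} --- so there is no internal argument to compare against; your proposal is a correct reconstruction of that standard argument: compute $\rho\circ\eta$ on a normalized cycle $(\mathcal{E},P)$ to get the operator-exponent loop $V_t=1+(e^{2\pi i u_t}-1)P$ over $C_0(0,\infty)$, then homotope the exponent to a scalar to recover the usual Bott loop implementing the suspension isomorphism. The one detail your sketch should make explicit is the endpoint matching in that homotopy: if you interpolate via $w_{t,r}=(1-r)u_t+r\sigma(t)$, the scalar parametrization $\sigma$ must satisfy $\sigma(t)\to 1$ as $t\to\infty$, i.e.\ agree at infinity with the strict limit of $u_t$, because for intermediate $r$ the error terms in $V^{(r)*}V^{(r)}-1$ and $g(V^{(r)})-V^{(r)}$ have the form (fixed compact operator, such as $P^2-P$ or $g(P)-P$) times $e^{2\pi i w_{t,r}}-1$, and these vanish at $t=\infty$ only because the scalar and operator phases recombine to $e^{2\pi i\cdot 1}=1$; with that choice, quasicentrality of $\{u_t\}$ (with respect to $P$ and to the $G$-action) does close all the remaining estimates, uniformly in $r$, exactly as you indicate.
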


	\begin{prop}\cite{KasSkan}
		Let  $A$ be a nuclear proper $G$-algebra. Then $$\eta:KK^G(A,B) \rightarrow E^G(A,B),$$ is an isomorphism.
	\end{prop}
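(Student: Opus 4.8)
The plan is to treat $\eta$ as a natural transformation between the two contravariant functors $A\mapsto KK^G(A,B)$ and $A\mapsto E^G(A,B)$, valued in $\mathbb{Z}/2$-graded abelian groups, and to prove it is an isomorphism on the full subcategory of nuclear proper $G$-algebras by a dévissage argument. The two functors share all the structural properties recalled above: both are $G$-homotopy invariant, stable, and satisfy Bott periodicity. Crucially, on the class of nuclear algebras every short exact sequence admits a $G$-equivariant completely positive splitting, so on this class both functors are half-exact and each short exact sequence of (nuclear) $G$-algebras produces a six-term exact sequence. By naturality of $\eta$ with respect to the boundary maps, which one checks at the level of cycles using the explicit formulas in Definition \ref{eta}, $\eta$ carries one six-term sequence to the other compatibly. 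This places us in the standard situation of comparing two homology-type theories linked by a natural transformation, where agreement on generators plus the five lemma forces agreement everywhere.

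First I would establish the base case $A=\mathbb{C}$. Here Lemma \ref{composeid} shows that $\rho\circ\eta$ is a periodicity isomorphism, and the symmetric computation shows $\eta$ and $\rho$ are mutually inverse up to periodicity; hence $\eta$ is an isomorphism when $A=\mathbb{C}$. Next I would run the dévissage. A nuclear proper $G$-algebra $A$ is a $C_0(X)$-algebra over a proper $G$-space $X$; since $G$ is discrete, properness forces all stabilizers to be finite, so $X$ is locally of the form $G\times_F U$ with $F\le G$ finite. Writing $X$ as an increasing union of $G$-compact invariant open subsets and using that both functors are continuous, i.e.\ commute with the relevant inductive limits (which $\eta$ respects), reduces the problem to $G$-compact $X$. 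A Mayer--Vietoris induction over a finite cover by elementary pieces of type $G\times_F U$, together with the five lemma applied to the compatible six-term sequences, then reduces to the induced algebras over such pieces. The induction isomorphisms for $KK^G$ and $E^G$ are compatible with $\eta$ and bring us down to the case of a finite group $F$, where the equivariant theories reduce to the non-equivariant ones; there $\eta\colon KK(A,B)\to E(A,B)$ is an isomorphism for nuclear $A$ by the classical comparison theorem characterizing $E$-theory as the universal stable, homotopy-invariant, half-exact functor, a property $KK$ enjoys precisely when the first variable is nuclear.

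The main obstacle is the inductive step and, underlying all of it, the construction of an explicit inverse to $\eta$: producing from an equivariant asymptotic morphism an actual Kasparov cycle. This is exactly where nuclearity and properness are indispensable. Nuclearity supplies $G$-equivariant completely positive lifts, via an equivariant Choi--Effros/Arveson-type lifting argument, that allow one to straighten an asymptotic morphism into a genuine $KK$-cycle; properness supplies a cutoff function so that every such construction can be averaged into a $G$-equivariant one. The technical heart of the argument is verifying that the resulting assignment is well defined on homotopy classes, is a two-sided inverse to $\eta$, and commutes with the boundary maps in the Mayer--Vietoris sequences. All of the homotopy-invariance, stability, and exactness bookkeeping described above exists only to make this local-to-global lifting argument go through, and keeping track of equivariance through the reduction to finite subgroups is the most delicate bookkeeping to get right.
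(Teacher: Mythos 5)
First, a point of comparison: the paper does not prove this proposition at all --- it is imported from Kasparov--Skandalis \cite{KasSkan} and used as a black box --- so your sketch can only be measured against the actual content of that theorem. Measured that way, it has a genuine gap, and it sits exactly at the step you dispose of first. Your ``base case'' claims that Lemma \ref{composeid} plus ``the symmetric computation'' shows $\eta$ and $\rho$ are mutually inverse for $A=\mathbb{C}$. Lemma \ref{composeid} only says that $\rho\circ\eta$ is the periodicity isomorphism; this makes $\eta$ split injective and $\rho$ surjective, nothing more. The missing composition $\eta\circ\rho$ is precisely the hard direction, and there is no symmetric computation: $\rho$ turns an asymptotic morphism $\{\varphi_t\}\colon\Sigma\dashrightarrow\mathcal{K}(\mathcal{E})$ into a single operator $V$ on $\mathcal{E}\otimes C_0(0,\infty)$, and recovering the homotopy class of $\{\varphi_t\}$ from the $KK$-class of $(\mathcal{E}\otimes C_0(0,\infty),V)$ is exactly the assertion that equivariant $E$-theory classes lift to $KK$-theory, which fails (or is unknown) for general infinite $G$. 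Indeed, if your base case were correct, one would get $KK^G_*(\mathbb{C},B)\cong\{\Sigma^{2-*},B\}_G$ for every countable discrete group and every coefficient algebra; the properness hypothesis in the proposition would then be superfluous and the entire $\gamma$-element apparatus of this paper unnecessary. Note also that $\mathbb{C}$ is not a proper $G$-algebra when $G$ is infinite (a proper $G$-space has no global fixed point), so $A=\mathbb{C}$ lies outside the class over which your d\'evissage runs and is not a legitimate generator for it.

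The remaining architecture --- reduction to $G$-compact pieces of the form $G\times_F U$, induction from finite subgroups $F$, and averaging completely positive lifts against a cutoff function supplied by properness --- is the right general shape, and is broadly how results of this type are proved. But two further claims are false or unjustified as stated: $KK^G(-,B)$ does not commute with inductive limits in the first variable (one only has Milnor $\lim^1$-sequences, which need separate treatment in the induction), and nuclearity alone does not give $G$-equivariant completely positive splittings for infinite $G$ --- that is exactly where properness must enter, as you yourself say two paragraphs later, so your opening assertion of equivariant half-exactness on all nuclear $G$-algebras is circular. Finally, the actual inverse construction --- straightening an equivariant asymptotic morphism into a Kasparov cycle via lifting-plus-cutoff and verifying that it inverts $\eta$ compatibly with boundary maps and induction --- is described but never carried out; that construction is the entire content of the Kasparov--Skandalis theorem, so as written the proposal defers the proof rather than supplying it.
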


	\section{Relative $\gamma$-reduced group $C^*$-algebras}\label{sec:gamma-groupalgebra}
	Given an pair of discrete groups $G$ and $\Gamma$, a group homomorphism $h\colon G\to \Gamma$ naturally induces  a $C^\ast$ homomorphism between the corresponding maximal group $C^\ast$-algebras $ C^\ast_{\maximal}(G)$ and $C^\ast_{\maximal}(\Gamma)$. This in turns allows us to define the relative maximal group $C^\ast$-algebra for the pair $(G, \Gamma)$, where the latter is defined to be the suspension algebra of the mapping cone algebra associated to the map $h \colon  C^\ast_{\maximal}(G) \to C^\ast_{\maximal}(\Gamma)$.  However, $h\colon G\to \Gamma$ fails to induce a $C^\ast$ homomorphism between the corresponding reduced group $C^\ast$-algebras $ C^\ast_{r}(G)$ and $C^\ast_{r}(\Gamma)$ in general. Consequently, the corresponding relative reduced group $C^\ast$-algebra is \emph{not} defined in general. In this section, we shall introduce a new relative group $C^\ast$-algebras for a pair of groups $(G, \Gamma)$, under the assumption that the group $G$ has a $\gamma$ element (in the sense of Kasparov). The class of  groups that have a $\gamma$ element is rather large. Notably, Tu showed that if a group $G$ is coarsely embeddable into Hilbert space, then $G$ has a $\gamma$ element \cite{Tu_gamma}. For this reason, the new relative group $C^\ast$-algebras constructed in this section will be called relative $\gamma$-reduced group $C^\ast$-algebras. They constitute a key ingredient in the proof of our main theorem in this paper.

	Let us first recall what it means for a discrete group to have a $\gamma$ element in the sense of Kasparov. 
	
	\begin{defn}\label{def:gammaelement}
		Assume that $G$ is a countable discrete group. We say $G$ has a $\gamma$-element if  there exists a proper $G$-$C^*$-algebra  $\mathcal{A}$, and elements $b\in KK_i^G(\mathbb{C}, \mathcal{A})$  and  $d\in KK_i^G(\mathcal{A},\mathbb{C})$, for $i\in \mathbb{Z}/2\mathbb{Z}$  such that
		the element 
		\[  \gamma_{G} \coloneqq d\otimes_\mathcal{A}b\in KK^G(\mathbb{C},\mathbb{C}) \]
		becomes $1 \in KK^F(\mathbb{C},\mathbb{C})$ under the forgetful map $KK^G(\mathbb{C},\mathbb{C})\rightarrow KK^F(\mathbb{C},\mathbb{C})$,  for all finite subgroups $F\subset G$.
	\end{defn}

    If $G$ has a $\gamma$ element $\gamma_G$ in the sense of the above definition, then $\gamma_G$ naturally induces a homomorphism on the $K$-homology $K_i^G(\underline EG) = K_i(C_L^\ast(\underline EG)^G)$. By using Meyer-Vietoris sequences, a standard cutting-and-pasting argument shows that $(\gamma_G)_\ast\colon K_i^G(\underline EG) \to K_i^G(\underline EG)$ is equal to the identity map. 
	
	We also have the following analogue of $\gamma$ elements when we work with rational coefficients. 
	
	\begin{defn}\label{def:rationalgamma}
		Assume that $G$ is a countable discrete group. We say $G$ has a \emph{rational} $\gamma$-element if  there exists a proper $G$-$C^*$-algebra  $\mathcal{A}$, and elements $b\in KK_i^G(\mathbb{C}, \mathcal{A})$  and  $d\in KK_i^G(\mathcal{A},\mathbb{C})$, for $i\in \mathbb{Z}/2\mathbb{Z}$  such that
		the element 
		\[  \gamma_{G} \coloneqq d\otimes_\mathcal{A}b\in KK^G(\mathbb{C},\mathbb{C}) \]
		becomes $1 \in KK(\mathbb{C},\mathbb{C})$ under the forgetful map $KK^G(\mathbb{C},\mathbb{C})\rightarrow KK(\mathbb{C},\mathbb{C})$.
	\end{defn}

	Now let us assume $G$ is a countable discrete group with a  $\gamma$ element. Let $b\in KK_1^G(\mathbb{C}, \mathcal{A})$  and  $d\in KK_1^G(\mathcal{A},\mathbb{C})$ be as above.   By the construction from  Definition \ref{eta},
	we have 
	\[ \eta(b)\in \{\Sigma  ,\mathcal{A}\}_{G} \textup{ and } \eta(d)\in \{\Sigma  \mathcal{A}, \mathbb{C}\}_{G}. \] Since $\mathcal{A}$ is a  proper $G$-$C^*$-algebra, according to the equivariant stabilization theorem,  the $G$-Hilbert module over $\mathcal{A}$ can be taken as $\mathcal{A}\otimes H$, where $H=\oplus_{n=1}^\infty \ell^2(G).$ Hence $\eta(b)$ can be written as a $G$-equivariant asymptotic morphism $$\eta(b)\colon \Sigma  \dasharrow  \mathcal{A}\otimes \mathcal{K}(H).$$ Similarly, $\eta(d)$ is a $G$-equivariant asymptotic morphism 
	\[ \eta(d)\colon \Sigma  \mathcal{A}\dasharrow \mathcal{K}(H).\] Let us define 
	\begin{equation}\label{eq:mapbeta}
		\beta \coloneqq \id \otimes \eta(b) \colon  \Sigma^2 = \Sigma\otimes \Sigma  \dasharrow \Sigma\mathcal{A}\otimes \mathcal{K}(H)
	\end{equation}
	and  
	\begin{equation}\label{eq:mapalpha}
		\alpha\coloneqq \eta(d)\otimes \id \colon  \Sigma  \mathcal{A}\otimes \mathcal{K}(H)\dasharrow \mathcal{K}(H)\otimes \mathcal{K}(H)\cong \mathcal{K}(H).
	\end{equation}  
	Furthermore, we have   $$\eta(\gamma_{_G})\colon \Sigma  ^2\dasharrow \mathcal{K}(H)$$ which can also be viewed as the composition of   the  asymptotic morphisms $\alpha$ and $\beta$.

	If no confusion is likely to arise, we shall also use $\gamma_{_G}$ to denote  the asymptotic morphism $\eta(\gamma_{_G})\colon \Sigma  ^2\dasharrow \mathcal{K}(H) $. For any subgroup $F$ of $G$, we write    $\gamma_{_F}\colon \Sigma  ^2\dasharrow \mathcal{K}(H)$ to denote  the asymptotic morphism $\gamma_G$ but viewed as  $F$-equivariant asymptotic morphism.

	\begin{rmk}\label{representation-of-G}
		In the above construction,  $G$ acts on the Hilbert space $H = \oplus_{n=1}^\infty \ell^2(G)$ via the left regular representation of $G$ on each $\ell^2(G)$. If  we denote this action of $G$ on $H$ by $U_g$ for each $g\in G$,  then the induced action of $G$ on $\mathcal K(H)$ is given by $$g\cdot K=U_gKU_g^*,$$ for any $K\in\mathcal{K}(H)$ and  any $g\in G$. 
	\end{rmk}

	The following lemma is a key ingredient in our construction of relative $\gamma$-reduced group $C^*$-algebras. 
	
	\begin{lem}\label{key-lemma}
		Suppose $G$ has a  $\gamma$-element.
		Given  a group homomorphism $h\colon G\rightarrow\Gamma$, there exists a natural asymptotic morphism 
		\begin{equation}\label{eq:homogamma}
			h_{\gamma}\colon  \Sigma^2C_{\red}^*(G)\dashrightarrow C_{\red}^*(\Gamma)\otimes  \mathcal{K}(H)
		\end{equation}
		such that 
		$$h_{\gamma,t}=h_{t} \textup{ on } C_c(G, \Sigma  ^2),$$
		where for each $t\in[1,+\infty)$, $h_{t}$ is a map  $C_c(G, \Sigma  ^2)\longrightarrow C_c(\Gamma, \mathcal{K}(H))$ given by
		$$h_{t}(\sum f_g\cdot g)=\sum (\gamma_{_{G,t}}(f_g) \cdot U_g )\cdot h(g).$$
	\end{lem}
	
	\begin{proof} 
		Since $G$ has a (rational) $\gamma$-element, the above discussion shows that 
		we have $G$-equivariant asymptotic morphisms 
		$$\beta\colon  \Sigma^2  \dasharrow  \Sigma\mathcal{A}\otimes \mathcal{K}(H),$$ 
		and 
		$$\alpha\colon  \Sigma  \mathcal{A}\otimes \mathcal{K}(H)\dasharrow \mathcal{K}(H),$$
		such that their composition is the  asymptotic morphism 
		$$\gamma_{_G}\colon \Sigma  ^2\dasharrow \mathcal{K}(H).$$ 
		
		Consider the group homomorphism 
		$\widetilde h\colon  G\rightarrow G\times\Gamma$ defined by $\widetilde h(g)=(g,h(g))$, for any $g\in G$. Since $\widetilde h$ is injective, it induces a $C^\ast$-algebra homomorphism  $$\widetilde h \colon C_{\red}^*(G, \Sigma  ^2)\longrightarrow C_{\red}^*(G\times\Gamma, \Sigma  ^2).$$

		Let  $G\times\Gamma$ act on   $\Sigma  \mathcal{A}\otimes \mathcal{K}(H)$ by  $$(g,g^\prime)\cdot a:=g\cdot a,$$ for any $(g,g^\prime)\in G\times\Gamma$ and $a\in \Sigma  \mathcal{A}\otimes \mathcal{K}(H)$. Note that we have the  natural isomorphism
		$$\zeta\colon  C_{\red}^*(G\times\Gamma, \Sigma  \mathcal{A}\otimes \mathcal{K}(H))
		\rightarrow C_{\red}^*(G, \Sigma  \mathcal{A}\otimes \mathcal{K}(H))\otimes C_{\red}^*(\Gamma).$$
		
		Since $\Sigma  \mathcal{A}\otimes \mathcal{K}(H)$ is a proper $G$-$C^*$-algebra,  the maximal crossed product of $\Sigma  \mathcal{A}\otimes \mathcal{K}(H)$ by $G$ coincides with the corresponding reduced crossed product:
		$$\theta \colon  C_{\maximal}^*(G, \Sigma  \mathcal{A}\otimes \mathcal{K}(H))
		\stackrel{\cong}{\longrightarrow}C_{\red}^*(G, \Sigma  \mathcal{A}\otimes \mathcal{K}(H)).$$
		
		Denote by
		$$\pi\colon C_{\maximal}^*(G,\mathcal{K}(H))\rightarrow \mathcal{K}(H)$$ the $*$-homomorphism  induced by  the trivial group homomorphism
		$\pi\colon G\rightarrow \{e\}$. More specifically, we have 
		\[  \pi(\sum K_g g) = \sum K_g U_g  \]
		for all $K_g \in \mathcal K(H)$ and $g\in G$, where $U_g$ is left translation of $g$ on $H = \oplus_{n=1}^\infty \ell^2(G)$ (cf. Remark \ref{representation-of-G}).

		The $G$-equivariant asymptotic morphisms $\alpha$ and $\beta$ naturally extend to give  asymptotic morphisms on the following crossed products:
		$$\beta\colon C_{\red}^*(G\times\Gamma, \Sigma^2)
		\dashrightarrow  C_{\red}^*(G\times\Gamma, \Sigma  \mathcal{A}\otimes \mathcal{K}(H))$$
		and
		$$\alpha\colon C_{\maximal}^*(G, \Sigma  \mathcal{A}\otimes \mathcal{K}(H))
		\dashrightarrow C_{\maximal}^*(G,\mathcal{K}(H)).$$

		Now let $h_{\gamma}: \Sigma^2C_{\red}^*(G)\dashrightarrow C_{\red}^*(\Gamma)\otimes  \mathcal{K}(H)$ be the asymptotic morphism obtained as the following  composition
		\[\begin{tikzcd}
			\Sigma^2C_{\red}^*(G) & C_{\red}^*(G\times\Gamma, \Sigma^2) & C_{\red}^*(G\times\Gamma, \Sigma  \mathcal{A}\otimes \mathcal{K}(H)) \\ 
			\\
			\mathcal{K}(H) \otimes C_r^\ast(\Gamma) & C_{\maximal}^*(G,\mathcal{K}(H))\otimes C_r^\ast(\Gamma) & C_{\maximal}^*(G, \Sigma  \mathcal{A}\otimes \mathcal{K}(H)) \otimes C_r^\ast(\Gamma)
			\arrow["\widetilde h", from=1-1, to=1-2]
			\arrow["\beta", dashed, from=1-2, to=1-3]
			\arrow["\cong", from=1-3, to=3-3]
			\arrow["\pi\otimes \id "', from=3-2, to=3-1]
			\arrow["\alpha\otimes \id "', dashed, from=3-3, to=3-2]
		\end{tikzcd}\]
		It is clear from the above construction that $$h_{\gamma,t}=h_{t} \textup{ on } C_c(G, \Sigma  ^2),$$
		where for each $t\in[1,+\infty)$, $h_{t}$ is a map  $C_c(G, \Sigma  ^2)\longrightarrow C_c(\Gamma, \mathcal{K}(H))$ given by
		$$h_{t}(\sum f_g\cdot g)=\sum (\gamma_{_{G,t}}(f_g) \cdot U_g )\cdot h(g).$$
		This finishes the proof. 
	\end{proof}

	The asymptotic morphism $h_{\gamma}$ from the above lemma naturally induces  a $C^*$ homomorphism  (also denoted by $h_{\gamma}$)
	\begin{equation}\label{eq:asymp-reduced}
		h_{\gamma}\colon  C_{\red}^*(G, \Sigma  ^2)\rightarrow  \asymalg(C_{\red}^*(\Gamma)\otimes \mathcal{K}(H))
	\end{equation}
	(cf. the discussion after Definition \ref{uniformly-asymptotic-algebra}).

	\begin{defn}\label{relative-gamma-reduced-algebras}
		Let $G$ and $\Gamma$ be countable discrete groups. Suppose $G$ has a (rational)  $\gamma$-element. For any group homomorphism $h\colon G\rightarrow\Gamma$,  we define the   \emph{relative $\gamma$-reduced group $C^*$-algebra} $C_{{\gamma}}^*(G,\Gamma)$ to be the mapping cone associated to the map
		$h_{\gamma}$ in line \eqref{eq:asymp-reduced}, that is,  $C_{{\gamma}}^*(G,\Gamma)$ is the following $C^\ast$ algebra 
		\[  \{(a, f)\in  C_{\red}^*(G, \Sigma  ^2) \oplus C_0([0, 1),  \asymalg(C_{\red}^*(\Gamma)\otimes \mathcal{K}(H))) \mid h_\gamma(a) = f(0)\}. \]
	\end{defn}

	\section{Relative $K$-homology and relative Baum-Connes assembly map}\label{sec:relativetheory}
	
	In this section, we  review the construction of relative $K$-homology  and relative Baum-Connes assembly map (cf. \cite{MR4704778}).

	\subsection{Relative K-homology}\label{sec:relativeK}

	\begin{defn}  Let $\Gamma$ be a finitely generated group with a word length metric $d$. Let $s > 0$. The \textit{Rips complex} of $\Gamma$ at scale $s$, denoted by $P_s(\Gamma)$, is the simplicial complex with the vertex set $\Gamma$ such that a subset $\{\gamma_0,\cdots,\gamma_n\}$ of $\Gamma$ spans a simplex if and only if $d(\gamma_i,\gamma_j)\leqslant s$ for all $i,j$.
	\end{defn}
	
	Each Rips complex $P_s(\Gamma)$ is equipped with the spherical metric. Recall that the spherical metric is the maximal metric whose restriction to each simplex $\left\{\sum_{i=0}^n c_i t_i\right\} \subset P_s(\Gamma)$ is the metric obtained by identifying this simplex with the upper hemisphere $S^n_+=\left\{(t_0,t_1,\cdots,t_n): \sum_{i=0}^n t^2_i=1, t_i\geq 0, ~\forall 0\leq i \leq n\right\}$ by
	$$
	\left(c_0,c_1,\cdots, c_n\right) \mapsto\left(\frac{c_0}{\sqrt{\sum_i c^2_i}}, \frac{c_1}{\sqrt{\sum_i c^2_i}},\cdots, \frac{c_n}{\sqrt{\sum_i c^2_i}}\right)
	$$
	where $S^n_+\subset \mathbb{R}^{n+1}$ is endowed with the standard round metric of the upper hemisphere. 
	
	Note that $\Gamma$ naturally acts on  each $P_s(\Gamma)$. More precisely, for each element $x=\sum_{\gamma\in\Gamma}t_\gamma\gamma\in P_s(\Gamma)$ and $g \in \Gamma$, we have 
	$$g \cdot \big(\sum_{\gamma\in\Gamma}t_\gamma\gamma\big)=\sum_{\gamma\in\Gamma}t_\gamma g\gamma.$$
	It is obvious that this $\Gamma$-action is proper.
	
	Suppose $G$ and $\Gamma$ are finitely generated groups, and $h \colon G \to \Gamma$ is a group homomorphism. Assume that $S \subset G$ is a finite symmetric generating set of $G$, that is, $S$ generates $G$ and  $g^{-1} \in S$ for each $g \in S$. There is a left invariant word length metric $d_G$ on $G$ naturlly associated to the generating subset $S$. Furthermore, if we choose  a finite symmetric generating set $S' \subset \Gamma$ that contains $h(S)$, then the left invariant metric $d_{\Gamma}$ on $\Gamma$ determined by $S'$ satisfies that  $d_{\Gamma}(h(g_1),h(g_2))\leq d_G(g_1,g_2)$ for any $g_1,g_2$ in $G$. For each $s>0$, 
	the group homomorphism  $h\colon G \to \Gamma$  extends to a continuous map (also defined by $h$)
	$$h\colon  P_s(G) \to P_s(\Gamma)$$
	by
	$$h(\sum_{\gamma\in\Gamma}t_\gamma\gamma)=\sum_{\gamma\in\Gamma}t_\gamma h(\gamma)$$
	for each $\sum_{\gamma\in\Gamma}t_\gamma\gamma \in P_s(\Gamma)$. Note that
	$$d_{P_s(\Gamma)}(h(x),h(y))\leq d_{P_s(G)}(x, y)$$
	for all $s>0$ and all $x, y \in P_s(G)$.  
	It follows that $h\colon G\to \Gamma$ induces a $C^\ast$ homomorphism
	\begin{equation}\label{eq:localhomo}
		h_{\maximal, L}\colon C_{\maximal, L}^*(P_sG)^G
		\overset{}{\longrightarrow} C_{L}^* (P_{s}(\Gamma))^{\Gamma}.
	\end{equation}
	See Section \ref{sec:Roe-local} for the precise definition of localization algebras $C_{L}^*(P_sG)^G$ and $C_{L}^* (P_{s}(\Gamma))^{\Gamma}$. 
	
	\begin{defn}\label{def:relativeK}
		We define  $C_{L}^{*}(P_{s}G, P_{s}\Gamma)^{G,\Gamma}$ to be 
		the mapping cone of the map 
		$$ h_{\maximal, L}\colon C_{\maximal, L}^*(P_sG)^G
		\to  C_{L}^* (P_{s}\Gamma)^{\Gamma}.$$
		We define 
		$$K_{i}^{G,\Gamma}(\underline{E}G,\underline{E}\Gamma):=\lim\limits_{s\rightarrow \infty}K_i(C_{L}^{*}(P_{s}G, P_{s}\Gamma)^{G,\Gamma}).
		$$
	\end{defn}
	
	\begin{rmk}
		Even though we have used both the maximal localization algebra and the reduced localization algebra in the above definition of the relative localization algebra $C_{L}^{*}(P_{s}G, P_{s}\Gamma)^{G,\Gamma}$, the $K$-theory of $C_{L}^{*}(P_{s}G, P_{s}\Gamma)^{G,\Gamma}$ coincides with the usual   $(G, \Gamma)$-equivariant relative $K$-homology of the pair $(P_sG, P_s\Gamma)$. 
	\end{rmk}

	Let us conclude this subsection with the following lemma, the proof of which is elementary. 
	
	\begin{lem}\label{product-structure}
		Let $G$ and $\Gamma$ be countable discrete groups, and let $X$ and $Y$ be metric spaces equipped with proper actions of $G$ and $\Gamma$, respectively. Then for any given $G$-algebra $A$ and $\Gamma$-algebra $B$, there exist natural isomorphisms
		$$C_{\red}^*(X\times Y,A \otimes B)^{G\times \Gamma} \overset{\cong}{\longrightarrow}
		C_{\red}^*(X,A)^G\otimes  C_{\red}^*(Y,B)^\Gamma,  $$
		and 
		$$C_{L}^*(X\times Y,A \otimes B)^{G\times \Gamma}\overset{\cong}{\longrightarrow}
		C_{L}^*(X,A)^G\otimes  C_{L}^*(Y,B)^{\Gamma}. $$
	\end{lem}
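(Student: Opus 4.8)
The plan is to realize both sides as concrete $C^*$-algebras acting on one common Hilbert module and then identify them. First I would fix admissible modules for the two factors: an admissible Hilbert $G$-$X$-module $H_X = H_{X,0}\otimes A$ with representation $\varphi_X = \varphi_{X,0}\otimes 1$, and an admissible Hilbert $\Gamma$-$Y$-module $H_Y = H_{Y,0}\otimes B$ with $\varphi_Y = \varphi_{Y,0}\otimes 1$. Forming the external tensor product module $\mathcal{H} := H_X \otimes H_Y$ over $A\otimes B$ and rearranging tensor factors, $\mathcal{H}$ becomes $(H_{X,0}\otimes H_{Y,0})\otimes(A\otimes B)$ with $C_0(X\times Y)$ represented by $\varphi_{X,0}\otimes\varphi_{Y,0}$. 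I would then check that $(C_0(X\times Y), G\times\Gamma, \mathcal{H})$ is admissible: properness and cocompactness of the product action follow from those of the factors, and ampleness and non-degeneracy of $\varphi_{X,0}\otimes\varphi_{Y,0}$ are inherited from the factors. For the finite-subgroup condition I would use the standard diagonal model in which $C_0$ of each space acts by multiplication on $\ell^2$ of a dense invariant orbit and the group acts diagonally on this together with the regular representation; since any finite subgroup $F\subset G\times\Gamma$ acts \emph{freely} on $G\times\Gamma$ by translation, the diagonal $F$-action on $\chi_E\mathcal{H}$ is free, so $\chi_E\mathcal{H}$ is a multiple of $\ell^2(F)$, as required. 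Since the Roe and localization algebras are independent of the choice of admissible module, I may use $\mathcal{H}$ throughout.

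Next I would build the forward map. For $T\in\mathbb{C}[X,A]^G$ and $S\in\mathbb{C}[Y,B]^\Gamma$, the operator $T\otimes S\in\mathcal{L}(\mathcal{H})$ is $(G\times\Gamma)$-invariant because $(g,g')\cdot(T\otimes S)=(gT)\otimes(g'S)=T\otimes S$; it has finite propagation with respect to the product metric, with $\pg(T\otimes S)\le \pg(T)+\pg(S)$; and it is locally compact, since approximating $f\in C_0(X\times Y)$ by finite sums $\sum_k f_1^{(k)}\otimes f_2^{(k)}$ reduces $f\cdot(T\otimes S)$ to sums of $(f_1^{(k)}T)\otimes(f_2^{(k)}S)$, each a product of compacts and hence compact. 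Thus $T\otimes S\in\mathbb{C}[X\times Y, A\otimes B]^{G\times\Gamma}$, and bilinearity gives a $*$-homomorphism from the algebraic tensor product $\mathbb{C}[X,A]^G\odot\mathbb{C}[Y,B]^\Gamma$. Because all of these algebras act faithfully on the respective modules, the norm on $\mathcal{L}(\mathcal{H})$ restricts to the spatial tensor norm, so this map extends to an injective $*$-homomorphism $\Phi\colon C_{red}^*(X,A)^G\otimes C_{red}^*(Y,B)^\Gamma \hookrightarrow C_{red}^*(X\times Y, A\otimes B)^{G\times\Gamma}$, where $\otimes$ is the minimal $C^*$-tensor product.

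The hard part will be surjectivity: I must show that every $(G\times\Gamma)$-invariant, locally compact, finite-propagation operator $W$ on $\mathcal{H}$ is a norm limit of finite sums of elementary tensors. The plan is a cutoff and partition-of-unity argument exploiting cocompactness and finite propagation. Choosing relatively compact fundamental domains in $X$ and $Y$ and associated equivariant partitions of unity, the product partition localizes $W$ into finitely many pieces (up to $(G\times\Gamma)$-translation), each supported in a product $U\times V$ of relatively compact sets; finite propagation of $W$ guarantees that only finitely many such bands contribute. On each piece I would use the identification $\mathcal{K}(H_X\otimes H_Y)\cong\mathcal{K}(H_X)\otimes\mathcal{K}(H_Y)$ together with the density of operators from $\mathbb{C}[X,A]^G$ and $\mathbb{C}[Y,B]^\Gamma$ localized to $U$ and $V$ to approximate the piece by finite sums of elementary tensors, and then reassemble equivariantly. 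I expect this separation-of-variables-and-reassembly step to be the only genuine difficulty; everything preceding it is formal.

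Finally, the localization statement follows by running the same construction on time-parametrized families. Sending $t\mapsto T(t)\otimes S(t)$ carries $\mathbb{C}_L[X,A]^G\odot\mathbb{C}_L[Y,B]^\Gamma$ into $\mathbb{C}_L[X\times Y, A\otimes B]^{G\times\Gamma}$, since uniform continuity and uniform boundedness are preserved and $\pg(T(t)\otimes S(t))\le\pg(T(t))+\pg(S(t))\to 0$ as $t\to\infty$. The density argument localizes each $W(t)$ by the same cutoff scheme, and because the propagations tend to $0$ the approximations can be arranged uniformly in $t$, yielding the isomorphism of localization algebras. Naturality in $A$, $B$ and in the underlying maps is immediate, since all of the isomorphisms are induced by the single concrete tensor-product identification of modules.
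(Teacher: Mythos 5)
Your proposal follows essentially the same route as the paper's proof: represent both sides on the tensor product of standard admissible modules, send elementary tensors $T\otimes S$ to the obvious product operator (the spatial-norm identification you invoke is exactly what makes the extension well defined), and prove surjectivity by cutting a finite-propagation operator along a product fundamental domain $Z_G\times Z_\Gamma$, approximating each compact piece by finite sums of elementary tensors, and reassembling equivariantly. The ``separation-of-variables-and-reassembly'' step you flag as the genuine difficulty is carried out in the paper precisely as you plan it: each localized piece $\chi_{(g',r')Z}\cdot Q\cdot\chi_Z$ is approximated by $\sum_j T_j\otimes S_j$, the translates are summed via $\widetilde{T}_j=\sum_g U_g T_j U_g^*$ and $\widetilde{S}_j=\sum_r U_r S_j U_r^*$, and the error stays small because the translated differences have mutually disjoint supports.
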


	\subsection{Relative Baum-Connes assembly map}\label{sec:relativeBC}
	We defined  a relative $\gamma$-reduced group $C^\ast$-algebra for each group homomorphism $h\colon G\to \Gamma$, under the assumption that $G$ has a (rational) $\gamma$ element in Section \ref{sec:gamma-groupalgebra}. In order to formulate the relative Baum-Connes assembly map for the context of these new relative $\gamma$-reduced group $C^\ast$-algebras, we shall first give an alternative  description of the relative $K$-homology $K_{i}^{G,\Gamma}(\underline{E}G,\underline{E}\Gamma)$.

	Let us first consider the construction at the level of Roe algebras. Suppose $G$ and $\Gamma$ are finitely generated groups, and $h \colon G \to \Gamma$ is a group homomorphism. By the same discussion as in Section \ref{sec:relativeK},  we have the map 
	$h\colon  P_s(G) \to P_s(\Gamma).$
	Let us choose a countable dense $G$-invariant subset $X$ of $P_s(G)$ and a countable dense $\Gamma$-invariant subset $Y$ of $P_s(\Gamma)$ such that  $h(X)$ is a subset of $Y$. Suppose $G$ has a rational $\gamma$-element. We define a family of maps  
	$$h_{t}\colon \mathbb{C}[P_sG, \Sigma  ^2]^G \rightarrow \mathbb{C}[P_s\Gamma, \mathcal{K}(H)]^\Gamma $$
	with  $t\in [1,+\infty)$ 
	as follows (cf. Lemma \ref{key-lemma}). Let $\gamma_{_G}\colon \Sigma  ^2\dasharrow \mathcal{K}(H) $ be the asymptotic morphism from Section \ref{sec:gamma-groupalgebra}. 
	An  element $A\in \mathbb{C}[P_sG, \Sigma  ^2]^G$ can be viewed as a $G$-invariant map $X\times X\rightarrow \Sigma  ^2\otimes \mathcal{K}(H)$, we define $h_t(A)$ to be the map $h_t(A)\colon Y\times Y\rightarrow \mathcal{K}(H)$ given by 
	$$
	h_t(A)(z,w):=\sum\limits_{g \in \ker(h)}\gamma_{_{G,t}}(T_{gx,y})\cdot U_g
	$$
	if there exists $(x,y)\in X\times X$ such that $h(x)=z$ and $h(y)=w$; otherwise, set  $h_t(A)(z,w):=0$. Note that in the former case, the value $h_t(A)(z,w)$ does not depend on the choice of $(x, y)$ as long as we have $h(x)=z$ and $h(y)=w$.   It is straightforward  to check that $h_t(A)$ is a $\Gamma$-invariant and has finite propagation,  thus lies in  $\mathbb{C}[P_s\Gamma, \mathcal{K}(H)]^\Gamma$.
	
	We have the following obvious analogue of  Lemma \ref{key-lemma}.
	
	\begin{lem}
		Suppose $G$ has a rational $\gamma$-element. Given  a group homomorphism $h\colon G\rightarrow\Gamma$, for each $s>0$, there exists an  asymtotic morphism
		\begin{equation}\label{eq:homogammaRoe}
			h_{\gamma}\colon C_{\red}^*(P_sG, \Sigma  ^2)^G \overset{}{\dashrightarrow}  C_{\red}^*(P_{s}\Gamma,\mathcal{K}(H))^\Gamma
		\end{equation}
		such that 
		$$h_{\gamma,t}=h_{t} \textup{ on } \mathbb{C}[P_sG, \Sigma  ^2]^G.$$
	\end{lem}

	The definition of the map $h_{t}\colon \mathbb{C}[P_sG, \Sigma  ^2]^G \rightarrow \mathbb{C}[P_s\Gamma, \mathcal{K}(H)]^\Gamma $ extends in an obvious manner to a map 
	$$h^L_{t}\colon \mathbb{C}_L[P_sG, \Sigma  ^2]^G \rightarrow \mathbb{C}_L[P_s\Gamma, \mathcal{K}(H)]^\Gamma.$$
	We  have the following analogue of Lemma \ref{key-lemma} for the correpsonding localization algebras. 
	
	\begin{prop} \label{relative-localization}
		Suppose $G$ has a rational $\gamma$-element. Given  a group homomorphism $h\colon G\rightarrow\Gamma$, for each $s>0$, 
		there is a natural  asymptotic morphism  
		\begin{equation}\label{eq:localgamma}
			h^L_{\gamma}\colon C_{L}^*(P_sG, \Sigma  ^2)^G \overset{}{\dashrightarrow}  C_{L}^*(P_{s}\Gamma,\mathcal{K}(H))^\Gamma
		\end{equation}
		such that 
		$$h^L_{\gamma,t}=h^L_{t} \textup{ on } \mathbb{C}_L[P_sG, \Sigma  ^2]^G$$ and  the following diagram commutes 
		\begin{equation*}
			\begin{tikzcd}
				C_{L}^*(P_sG, \Sigma  ^2)^G\arrow[r, "h^L_{\gamma,t}"]\arrow[d,"\ev"]& C_{L}^*(P_{s}\Gamma,\mathcal{K}(H))^\Gamma\arrow[d,"\ev"] \\
				C_{\red}^*(P_sG, \Sigma  ^2)^G\arrow[r, "h_{\gamma,t}"]& C_{\red}^*(P_{s}\Gamma,\mathcal{K}(H))^\Gamma
			\end{tikzcd}
		\end{equation*}
		for each $t\in [1,+\infty)$, where $\ev$ is the evaluation map at zero.  
	\end{prop}

	The above asymptotic morphisms induce the following $C^\ast$ homomorphsims: 
	$$
	h_{\gamma}\colon C_{\red}^*(P_s(G), \Sigma  ^2)^G \overset{}{\longrightarrow}  \asymalg( C_{\red}^*(P_{s}(\Gamma),\mathcal{K}(H))^\Gamma)
	$$
	and
	$$
	h^L_{\gamma}:C_{L}^*(P_s(G), \Sigma  ^2)^G \overset{}{\longrightarrow}  \asymalg( C_{L}^*(P_{s}(\Gamma),\mathcal{K}(H))^\Gamma).
	$$
	
	\begin{defn}
		Suppose $G$ has a (rational) $\gamma$-element. For any group homomorphism
		$h\colon G\rightarrow\Gamma$, we  define $C_{r,\gamma}^{*}(P_sG,P_{s}\Gamma)^{G, \Gamma}$ to be the mapping cone of  \[ C_{\red}^*(P_s(G), \Sigma  ^2)^G \overset{}{\longrightarrow}  \asymalg( C_{\red}^*(P_{s}(\Gamma),\mathcal{K}(H))^\Gamma), \] and similarly  $C_{L, \gamma}^{*}(P_sG,P_{s}\Gamma)^{G,\Gamma}$ to be the mapping cone of $$
		h^L_{\gamma}:C_{L}^*(P_s(G), \Sigma  ^2)^G \overset{}{\longrightarrow}  \asymalg( C_{L}^*(P_{s}(\Gamma),\mathcal{K}(H))^\Gamma).
		$$
	\end{defn}

	It is not difficult to see that $C_{r,\gamma}^{*}(P_sG,P_{s}\Gamma)^{G, \Gamma}$ is isomorphic to $C_{\gamma}^{*}(G, \Gamma)\otimes \mathcal{K}(H)$, where  $C_{\gamma}^{*}(G, \Gamma)$ is the relative $\gamma$-reduced group $C^\ast$-algebra introduced in Definition \ref{relative-gamma-reduced-algebras}. Let us summarize this as the following proposition. 
	
	\begin{prop}\label{prop:Roeiso}
		For any $s>0$, $$C_{r,\gamma}^{*}(P_sG,P_{s}\Gamma)^{G, \Gamma}\cong C_{\gamma}^{*}(G, \Gamma)\otimes \mathcal{K}(H).$$
	\end{prop}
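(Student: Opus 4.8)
The plan is to reduce the statement to John Roe's identification of an equivariant Roe algebra over a Rips complex with a stabilized reduced crossed product, and then to check that under this identification the asymptotic morphism $h_{roe,\gamma}$ defining the left-hand mapping cone is carried to $h_\gamma\otimes\id_{\mathcal{K}(H)}$, the asymptotic morphism of Lemma \ref{key-lemma} stabilized by the identity. Since the mapping cone commutes with tensoring by a fixed $C^*$-algebra, this will yield the desired isomorphism. Concretely, for a countable discrete group $\Lambda$ acting properly and cocompactly on $P_s(\Lambda)$ and a $\Lambda$-$C^*$-algebra $B$, Roe's argument (cf. \cite{Willett-Yu-higher-index-book}) produces a $*$-isomorphism $C^*_{red}(P_s(\Lambda),B)^\Lambda\cong C^*_{red}(\Lambda,B)\otimes\mathcal{K}(H_\Lambda)$, where $H_\Lambda$ is the separable Hilbert space $\ell^2(\Lambda_0)\otimes H_{int}$ assembled from a fundamental domain $\Lambda_0$ and the internal multiplicity space $H_{int}$ of the admissible module.

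First I would fix the geometric data. Choose a countable dense $G$-invariant set $X\subseteq P_s(G)$ and a countable dense $\Gamma$-invariant set $Y\subseteq P_s(\Gamma)$ with $h(X)\subseteq Y$, together with fundamental domains $X_0\subseteq X$ and $Y_0\subseteq Y$. Applying Roe's isomorphism to the domain (with $\Lambda=G$, $B=\Sigma^2$ and trivial action) and to the target (with $\Lambda=\Gamma$, $B=\mathcal{K}(H)$ and trivial action) gives
$$C^*_{red}(P_s(G),\Sigma^2)^G\cong \Sigma^2 C^*_{red}(G)\otimes\mathcal{K}(H_G)$$
and
$$C^*_{red}(P_s(\Gamma),\mathcal{K}(H))^\Gamma\cong C^*_{red}(\Gamma)\otimes\mathcal{K}(H)\otimes\mathcal{K}(H_\Gamma).$$
Since $H_G$, $H_\Gamma$ and $H$ are all separable and infinite dimensional, I would fix isomorphisms $\mathcal{K}(H_G)\cong\mathcal{K}(H_\Gamma)\cong\mathcal{K}(H)$, so that the two right-hand sides become $(\Sigma^2 C^*_{red}(G))\otimes\mathcal{K}(H)$ and $(C^*_{red}(\Gamma)\otimes\mathcal{K}(H))\otimes\mathcal{K}(H)$, that is, the domain and target of $h_\gamma$ each stabilized by $\mathcal{K}(H)$.

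The heart of the argument is to verify that, under these identifications, the asymptotic morphism $h_{roe,\gamma}$ agrees with $h_\gamma\otimes\id_{\mathcal{K}(H)}$, and I would do this on the dense $*$-subalgebras where both maps are given by explicit formulas. Under Roe's isomorphism a $G$-invariant operator $T\in\mathbb{C}[P_sG,\Sigma^2]^G$ is recorded by its matrix entries $(T_{gx_0,x_0'})_{x_0,x_0'\in X_0}$ across the fundamental domain, the group element $g$ playing the role of the crossed-product variable and $(x_0,x_0')$ the $\mathcal{K}(H)$-index. The formula defining $h_{roe,\gamma}$, namely $\widetilde T(h(x),h(y))=\sum_{g\in\ker h}\gamma_{G,t}(T_{gx,y})\,U_g$, sums precisely over the fibre of $h$ inside a single $G$-orbit; this matches the $\Gamma$-coefficient at $\ell\in\Gamma$ of $h_{alg,t}$ from Lemma \ref{key-lemma}, which collects $\sum_{k:\,h(k)=\ell}\gamma_{G,t}(f_k)U_k$, once $f_k$ is identified with the matrix of entries $(T_{k x_0,x_0'})$. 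Carrying this comparison through Lemma \ref{product-structure} and the maps $\zeta$, $\theta$, $\alpha$, $\beta$, $\pi$ used to build both $h_\gamma$ and $h_{roe,\gamma}$, one checks $h_{roe,\gamma,t}=h_{\gamma,t}\otimes\id_{\mathcal{K}(H)}$ for every $t$. This bookkeeping---tracking the $G$-invariance relation $T_{gx,gy}=g\cdot T_{x,y}$, the placement of the unitary twist $U_g$, and the matching of the $\ker h$-fibre sum with the crossed-product convolution---is the main obstacle, and it is the only place where the choice of geometric data is really used.

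Finally I would pass from asymptotic morphisms to mapping cones. The equality $h_{roe,\gamma,t}=h_{\gamma,t}\otimes\id_{\mathcal{K}(H)}$ descends to the associated $*$-homomorphisms into the uniformly asymptotic algebras; since the stabilizing factor $\mathcal{K}(H)$ is independent of $t$, the induced homomorphism factors through $\mathfrak{UA}(C^*_{red}(\Gamma)\otimes\mathcal{K}(H))\otimes\mathcal{K}(H)$, so that $h_{roe,\gamma}$ is identified with $h_\gamma\otimes\id_{\mathcal{K}(H)}$ as a map into this stabilized uniformly asymptotic algebra. Using that the mapping cone satisfies $\mathrm{Cone}(\phi\otimes\id_{\mathcal{K}(H)})\cong\mathrm{Cone}(\phi)\otimes\mathcal{K}(H)$ for any such $\phi$, and recalling that $C^*_\gamma(G,\Gamma)$ is by Definition \ref{relative-gamma-reduced-algebras} the cone of $h_\gamma$ while $C^*_{red,\gamma}(P_sG,P_s\Gamma)^{G,\Gamma}$ is the cone of $h_{roe,\gamma}$, I obtain the asserted isomorphism $C^*_{red,\gamma}(P_sG,P_s\Gamma)^{G,\Gamma}\cong C^*_\gamma(G,\Gamma)\otimes\mathcal{K}(H)$.
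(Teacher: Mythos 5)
Your proposal is correct and follows essentially the same route as the paper, whose own proof is only the remark that the claim ``follows from the definitions of $h_{alg,t}$ and the relative $\gamma$-reduced $C^*$-algebra'' together with the attribution to John Roe; your write-up simply supplies the details behind that remark, namely Roe's identification $C^*_{red}(P_s(\Lambda),B)^{\Lambda}\cong C^*_{red}(\Lambda,B)\otimes\mathcal{K}$ for cocompact proper actions, the fibrewise matching of the two $h_{alg,t}$ formulas over $\ker(h)$, and the compatibility of mapping cones with stabilization. The only caveat, inherited from the paper's statement itself rather than introduced by you, is that the last step silently identifies $\mathfrak{UA}(B)\otimes\mathcal{K}(H)$ with $\mathfrak{UA}(B\otimes\mathcal{K}(H))$, which holds only up to a canonical inclusion inducing an isomorphism on $K$-theory --- exactly what the paper needs in the sequel.
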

	\begin{proof}
		Recall that $C^\ast_r(P_sG)^G \cong C^\ast_r(G)\otimes \mathcal K(H)$.   Now the proposition  follows from the construction of the map 
		$h_{t}\colon \mathbb{C}[P_sG, \Sigma  ^2]^G \rightarrow \mathbb{C}[P_s\Gamma, \mathcal{K}(H)]^\Gamma $
		and the relative $\gamma$-reduced $C^*$-algebra $C_{\gamma}^{*}(G, \Gamma)$.
	\end{proof}
	Now consider the evaluation map
	\begin{eqnarray*}
		\ev\colon  C^*_{L,\gamma}(P_s(G), P_s(\Gamma))^{G, \Gamma} \to C_{\red,\gamma}^*(P_s(G), P_s(\Gamma))^{G, \Gamma}.
	\end{eqnarray*}
	For any $s_1<s_2$, the following diagram commutes 
	\begin{equation*}
		\begin{tikzcd}
			K_*(C^*_{L,\gamma}(P_{s_1}G, P_{s_1}\Gamma)^{G, \Gamma} ) \ar{r}{{\ev}_*} \ar{d}
			&  K_*(C^*_{\red,\gamma}(P_{s_1}G, P_{s_1}\Gamma)^{G, \Gamma} )\ar{r}{\cong} \ar{d}& K_*(C_{\gamma}^*(G,\Gamma)) \ar[equal]{d}\\
			K_*(C^*_{L,\gamma}(P_{s_2}G, P_{s_2}\Gamma)^{G, \Gamma} ) \ar{r}{{\ev}_*}&
			K_*(C^*_{\red,\gamma}(P_{s_2}G, P_{s_2}\Gamma)^{G, \Gamma} ) \ar{r}{\cong}& K_*(C_{\gamma}^*(G,\Gamma))
		\end{tikzcd}
	\end{equation*}

	\begin{defn} For a finitely generated group $G$ with a (rational) $\gamma$-element, we define 
		$$K_{i,\gamma}^{G,\Gamma}(\underline{E}G,\underline{E}\Gamma):=\lim_{s\rightarrow\infty}K_i(C_{L,\gamma}^{*}(P_sG,P_{s}\Gamma)^{G,\Gamma}).$$ 
	\end{defn}

	Next we shall show that the equivariant relative $K$-homology group $K_{i,\gamma}^{G,\Gamma}(\underline{E}G,\underline{E}\Gamma)$ defined above coincides with the classical equivariant relative $K$-homology group given in Definition \ref{def:relativeK}.

	Let us first fix some notation. Given a uniformly continuous  asymptotic morphism  $h\colon A\dasharrow B$ between  separable $C^*$-algebras. Let us denote the induced homomorphism $A\to  \asymalg(B)$ still by $h$. We denote by 
	$C_h$ the mapping cone associated to $h\colon A\to  \asymalg(B)$.

	\begin{lem}\label{lm:exact}
		Given a uniformly continuous asymptotic morphism  $h\colon A\dasharrow B$, there is a natural long exact sequence of $K$-theory groups,
		$$
		K_{0}( \Sigma  A)\xrightarrow{h_*} K_{0}(\Sigma  B)\xrightarrow{\imath_*} K_{0}( C_h)\xrightarrow{r_*} K_{0}(A)\xrightarrow{h_*} K_{0}(B),
		$$
		where $\imath\colon \Sigma\otimes \asymalg B\hookrightarrow C_h$ is the natural injection  and $r\colon C_h\rightarrow A$ is the restriction on $A$.
	\end{lem}
	
	\begin{proof}
		Consider the short exact sequence: $$0\rightarrow \Sigma\otimes\asymalg(B) \xrightarrow{\imath} C_h \xrightarrow{r}  A\rightarrow 0.$$
		which induces the following long exact sequence in $K$-theory: 
		$$
		K_{0}(\Sigma  A)\xrightarrow{h_*} K_{0}(\Sigma   \otimes \asymalg(B))\xrightarrow{\imath_*} K_{0}( C_h)\xrightarrow{r_*} K_{0}(A)\xrightarrow{h_*}K_{0}(\asymalg(B)).
		$$
		The lemma now follows by the following commutative diagram: 
		\begin{equation*}
			\begin{tikzcd}
				K_{*}(A)\arrow[dr, swap, "(h_t)_*"] \arrow{r}{h_*} & K_{*}(\asymalg(B))\\
				& K_*(B)\arrow{u}{\theta_*}
			\end{tikzcd}
		\end{equation*}
		where $\theta_*$ is an isomorphism from Lemma \ref{reversetrick}.
	\end{proof}

	From the discussion above, we have a  commutative diagram (Figure \ref{fig:khomologymap}) for each $s>0$, which shows that the composition of asymptotic morphisms $\alpha\circ \beta$ induces a natural asymptotic morphism from  $C_{L,\gamma}^{*}(P_sG,P_{s}\Gamma)^{G, \Gamma}$ to $C^{*}_{L}(P_{s}G,P_{s}\Gamma)^{G,\Gamma} \otimes \mathcal{K}(H)$.
	\begin{figure}
		\adjustbox{scale=0.9,center}{\begin{tikzcd}
				C_{L}^*(P_sG, \Sigma^2)^G &  C_{L}^*(P_sG, \Sigma  \mathcal{A}\otimes \mathcal{K}(H))^G & C_{ \maximal, L}^*(P_sG, \Sigma  \mathcal{A}\otimes \mathcal{K}(H))^G  \\
				& C_{L}^*(P_sG\times P_s\Gamma,\Sigma  ^2)^{G\times\Gamma}  &    \\
				& C_{L}^*(P_{s}G\times P_s\Gamma,\Sigma  \mathcal{A}\otimes \mathcal{K}(H))^{G\times\Gamma} \\
				& C_{L}^*(P_{s}G,\Sigma  \mathcal{A}\otimes \mathcal{K}(H))^G\otimes  C_{L}^*(P_{s}\Gamma)^\Gamma \\
				& C_{\maximal, L}^*(P_{s}G,\Sigma  \mathcal{A}\otimes \mathcal{K}(H))^G\otimes  C_{L}^*(P_{s}\Gamma)^\Gamma \\
				& C_{\maximal, L}^*(P_{s}G,\mathcal{K}(H))^G\otimes C_{L}^*(P_{s}\Gamma)^\Gamma  \\
				& C_{L}^*(pt, \mathcal{K}(H))\otimes C_{L}^*(P_s\Gamma)^\Gamma \\
				& \mathcal{K}(H)\otimes C_{L}^*(P_s\Gamma)^\Gamma & C_{\maximal, L}^*(P_{s}G,\mathcal{K}(H))^G
				\arrow["\cong", from=3-2, to=4-2]
				\arrow["\cong", from=4-2, to=5-2]
				\arrow["{\alpha\otimes \id}", dashed,  from=5-2, to=6-2]
				\arrow["{\pi\otimes \id}", from=6-2, to=7-2]
				\arrow["{\ev \otimes \id }", from=7-2, to=8-2]
				%\arrow["h_{\alpha}^{L, \mathcal{A}}", bend left=40, dashed, from=1-3, to=8-2]
				\arrow["\beta", dashed, from=1-1, to=1-2]
				\arrow["\cong", from=1-2, to=1-3]
				\arrow["{\widetilde h}"', from=1-1, to=2-2]
				\arrow["h^L_\gamma"', bend right=40, dashed, from=1-1, to=8-2]
				\arrow["\beta", dashed, from=2-2, to=3-2]
				%\arrow["{\widetilde h}"', bend left=20, from=1-3, to=3-2]
				%\arrow[from=2-3, to=3-2]
				\arrow["\alpha", dashed, from=1-3, to=8-3]
				\arrow["h_{\maximal, L}"', from=8-3, to=8-2]
			\end{tikzcd}
		}
		\caption{A commutative diagram}
		\label{fig:khomologymap}
	\end{figure}
	More precisely, recall that $C_{L,\gamma}^{*}(P_sG,P_{s}\Gamma)^{G, \Gamma}$ is the mapping cone of 
	\[ h^L_\gamma\colon C_{L}^*(P_sG, \Sigma^2)^G \to \asymalg(C_{L}^*(P_s\Gamma)^\Gamma\otimes \mathcal{K}(H)). \]

	Now the asymptotic morphism $\alpha\circ \beta \colon C_{L}^*(P_sG, \Sigma^2)^G  \dasharrow C_{\maximal, L}^*(P_{s}G,\mathcal{K}(H))^G $ induces a $C^\ast$ homomorphism 
	\[  \varPhi_{\alpha\circ \beta} \colon C_{L}^*(P_sG, \Sigma^2)^G  \to \asymalg C_{\maximal, L}^*(P_{s}G,\mathcal{K}(H))^G \]
	which fits into the following commutative diagram: 
	\[\begin{tikzcd}
		& C_{L}^*(P_sG, \Sigma^2)^G &  \\
		\asymalg C_{\maximal, L}^*(P_{s}G,\mathcal{K}(H))^G &	 & \asymalg(C_{L}^*(P_s\Gamma)^\Gamma\otimes \mathcal{K}(H)). 
		\arrow["\varPhi_{\alpha\circ \beta}"', from=1-2, to=2-1]
		\arrow["h^L_\gamma", from=1-2, to=2-3]
		\arrow["h_{\maximal, L}", from=2-1, to=2-3]
	\end{tikzcd}\]
	The $C^*$ homomorphism $h_{\maximal, L}\colon C_{\maximal, L}^*(P_{s}G,\mathcal{K}(H))^G \to   C_{L}^*(P_s\Gamma)^\Gamma \otimes \mathcal{K}(H)$ naturally induces a $C^*$ homomorphism
	\[ h_{\maximal, L}\colon \asymalg C_{\maximal, L}^*(P_{s}G,\mathcal{K}(H))^G \to  \asymalg( C_{L}^*(P_s\Gamma)^\Gamma \otimes \mathcal{K}(H)),  \]
	whose corresponding mapping cone will be denote by $C_{P_s, h}$.
	It follows from the above discussion that $\varPhi_{\alpha\circ \beta}$ naturally induces a $C^\ast$ homomorphism 
	\[ \sigma_\ast \colon C_{L,\gamma}^{*}(P_sG,P_{s}\Gamma)^{G, \Gamma} \to C_{P_s, h}. \]
	
	Note that at the $K$-theory level, we have the natural isomorphism 
	\[ K_i(C_{P_s, h}) \cong K_i(C^{*}_{L}(P_{s}G,P_{s}\Gamma)^{G,\Gamma}) \]
	In particular, the map $\sigma$ induces a homomorphism  
	$$\sigma_* \colon  K_i(C_{L,\gamma}^{*}(P_sG,P_{s}\Gamma)^{G, \Gamma}) \rightarrow  K_i(C^{*}_{L}(P_{s}G,P_{s}\Gamma)^{G,\Gamma} ),$$
	which, after passing to the inductive limit, gives 
	a homomorphism
	$$\sigma_* \colon  K_{i,\gamma}^{G,\Gamma}(\underline{E}G,\underline{E}\Gamma) \longrightarrow  K_{i}^{G,\Gamma}(\underline{E}G,\underline{E}\Gamma).$$

	\begin{prop}\label{samehomology}
		If   $G$  has  a (rational) $\gamma$-element, then the map
		$$\sigma_*\colon  K_{i,\gamma}^{G,\Gamma}(\underline{E}G,\underline{E}\Gamma) \longrightarrow  K_{i}^{G,\Gamma}(\underline{E}G,\underline{E}\Gamma)$$
		is a (rational) isomorphism.
	\end{prop}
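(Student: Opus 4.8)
The plan is to exhibit $\sigma$ as a morphism between the two mapping-cone long exact sequences produced by Lemma~\ref{tiangeng1}, and then to conclude by the Five Lemma followed by passage to the inductive limit over $s$. Fix $s>0$ and abbreviate $A_1 = C_L^*(P_sG, \Sigma^2)^G$ and $B_1 = C_L^*(P_s\Gamma, \mathcal{K}(H))^\Gamma$, so that $C_{L,\gamma}^{*}(P_sG,P_s\Gamma)^{G,\Gamma}$ is the mapping cone of the asymptotic morphism $h_{L,\gamma}\colon A_1 \dasharrow B_1$. On the target side, writing $A_2 = C_L^*(P_sG)^G$ and $B_2 = C_L^*(P_s\Gamma)^\Gamma \otimes \mathcal{K}(H)$, the algebra $C_L^*(P_sG, P_s\Gamma)^{G,\Gamma}$ is the mapping cone of the genuine $\ast$-homomorphism $\widetilde{h_{L}}\colon A_2 \to B_2$. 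Lemma~\ref{tiangeng1} then supplies, for each algebra, the long exact sequence relating $K_*(\Sigma A_i)$, $K_*(\Sigma B_i)$, $K_*$ of the cone, $K_*(A_i)$ and $K_*(B_i)$, and I would line these two sequences up so that $\sigma$ provides the vertical maps.

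Next I would identify those vertical maps from the defining formula $\sigma_t(b,l) = (\gamma_{_{G,t}}(b), \pi_t(l))$. On the base of the cone $\sigma$ restricts to the $\gamma$-element asymptotic morphism $\gamma_{_G}$, while on the suspended ideal it restricts to the canonical morphism $\pi\colon \mathfrak{UA}(B_1)\dasharrow B_1$ of line~\eqref{eq:canonicalaysm}. Passing to $K$-theory these give, respectively, the map $(\gamma_{_G})_*\colon K_*(A_1) \to K_*(C_L^*(P_sG,\mathcal{K}(H_G))^G)\cong K_*(A_2\otimes\mathcal{K}(H_G))$, which is a (rational) isomorphism by the displayed statement immediately preceding this proposition (itself a consequence of $\gamma_{_G}$ being the (rational) $\gamma$-element), and the map $\pi_*\colon K_*(\mathfrak{UA}(B_1)) \to K_*(B_1)$, which is an isomorphism by Lemma~\ref{reversetrick}. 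Since stabilization by $\mathcal{K}(H_G)$ leaves $K$-theory unchanged, both outer columns (and their suspensions) are (rational) isomorphisms.

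The hard part will be verifying that $\sigma$ is genuinely compatible with the two exact sequences, i.e.\ that the squares comparing them commute up to the reparametrization inherent to composing asymptotic morphisms, and hence commute on $K$-theory. The square over the base is exactly the commutativity of the comparison diagram displayed above, which asserts that $(\widetilde{h_{L}}\otimes\id)\circ\gamma_{_{G,t}}$ coincides with $h_{L,\gamma,t}$ after the canonical identifications. The square over the suspended ideal requires precisely the matching condition $\pi_t(l)|_{0} = (\widetilde{h_{L}}\otimes\id)(\gamma_{_{G,t}}(b))$, which is secured by the careful choice of the section defining $\pi$ noted just before the statement; this is what makes the individually asymptotic pieces assemble into an honest morphism of short exact sequences of $K$-theory. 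I expect establishing this compatibility to be the main technical obstacle, since once it is in place the remainder is formal.

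Finally, with both outer columns (rational) isomorphisms and the comparison squares commuting, the Five Lemma shows that $\sigma_*\colon K_*(C_{L,\gamma}^{*}(P_sG,P_s\Gamma)^{G,\Gamma}) \to K_*(C_L^*(P_sG,P_s\Gamma)^{G,\Gamma})$ is a (rational) isomorphism for each $s$. Because $\sigma$ is natural in $s$ with respect to the structure maps induced by the inclusions $P_rG \hookrightarrow P_sG$ for $r<s$, passing to the inductive limit, which is exact and preserves isomorphisms, yields the desired (rational) isomorphism $\sigma_*\colon K_{*,\gamma}^{G,\Gamma}(\underline{E}G,\underline{E}\Gamma) \to K_{*}^{G,\Gamma}(\underline{E}G,\underline{E}\Gamma)$. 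When $G$ has an honest $\gamma$-element, $(\gamma_{_G})_*$ is an integral isomorphism and the identical argument gives an integral isomorphism, which settles the remark preceding the proposition.
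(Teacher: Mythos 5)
Your proposal is correct and follows essentially the same route as the paper: both exhibit $\sigma$ as a map between the two mapping-cone long exact sequences supplied by Lemma \ref{tiangeng1}, identify the outer vertical maps as $(\gamma_{_G})_*$ (a (rational) isomorphism) and the $\mathfrak{UA}$-collapse $\pi_*$ (an isomorphism by Lemma \ref{reversetrick}), check commutativity of the comparison squares via diagram (4.2) and the matching condition built into the choice of $\pi$, and conclude by the Five Lemma before passing to the inductive limit over $s$. The only cosmetic difference is that the paper's statement of Lemma \ref{tiangeng1} already absorbs the identification $K_*(\Sigma\otimes\mathfrak{UA}(B))\cong K_*(\Sigma B)$, so its diagram displays that column map as an equality/isomorphism rather than explicitly as $\pi_*$, which is exactly the map you name.
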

	\begin{proof}
		The above discussion together with  Lemma \ref{lm:exact} implies  that we have the following commutative diagram: 
		\begin{equation*}
			\begin{tikzcd}
				K_{i+1}(C^*_L(P_sG,\Sigma  ^2)^G )\arrow[r,"(\varPhi_{\alpha\circ \beta})_*"] \arrow[d,"(h^L_{\gamma})_*"] & K_{i+1}(C_{\maximal, L}^*(P_{s}G,\mathcal{K}(H))^G ) \arrow[d,"(h_{\maximal, L})_*"] \\
				K_{*+1}(C_{L}^*(P_{s}\Gamma,\mathcal{K}(H))^\Gamma)   \arrow[r,"\cong"]\arrow[d,"\imath_*"] &K_{*+1}(C_{L}^*(P_{s}\Gamma,\mathcal{K}(H))^\Gamma) \arrow[d,"\imath_*"]\\
				K_*(C_{L,\gamma}^{*}(P_sG,P_{s}\Gamma)^{G, \Gamma}) \arrow[r, "\sigma_*"]\arrow[d,"r_*"] & K_*(C^{*}_{L}(P_{s}G,P_{s}\Gamma)^{G,\Gamma})\arrow[d,"r_*"]\\
				K_{*}(C^*_L(P_sG,\Sigma  ^2)^G )\arrow[r,"(\varPhi_{\alpha\circ \beta})_*"]\arrow[d,"(h^L_{\gamma})_*"] & K_{*}(C_{L}^*(P_sG)^G)  \arrow[d," (h_{\maximal, L})_*"]\\
				K_{*}( C_{L}^*(P_{s}\Gamma,\mathcal{K}(H))^\Gamma)  \arrow[r,"\cong"] &K_{*}(C_{L}^*(P_{s}\Gamma,\mathcal{K}(H))^\Gamma)
			\end{tikzcd}.
		\end{equation*}
		where both columns are exact sequences. By the properties  of the (rational) $\gamma$ element (Definition \ref{def:gammaelement} \& \ref{def:rationalgamma}), it follows by construction that $(\varPhi_{\alpha\circ \beta})_\ast$ is 
		a (rational) isomorphism. By the five lemma, we see that  the map $\sigma_*$ is an (rational) isomorphism. This finishes the proof. 
	\end{proof}
	
	The above discussion leads to the following variant of the relative Baum-Connes assembly map.  
	
	\begin{defn}
		Let  $G$ and $\Gamma$ be fintely generated countable discrete groups. Suppose $G$ has a (rational) $\gamma$ element. We define the \emph{$\gamma$-reduced  relative Baum-Connes assembly map} to be  the homomorphism $$\mu_{\gamma}\colon K_{*}^{G,\Gamma}(\underline{E}G,\underline{E}\Gamma) \cong  K_{*,\gamma}^{G,\Gamma}(\underline{E}G,\underline{E}\Gamma)  \rightarrow K_*(C_{\gamma}^*(G,\Gamma))$$ induced by the evalutation map 
		$$ {\ev}\colon C^*_{L,\gamma}(P_sG, P_s\Gamma)^{G, \Gamma}  \to C_{r,\gamma}^{*}(P_sG,P_{s}\Gamma)^{G, \Gamma}\cong C_{\gamma}^{*}(G, \Gamma)\otimes \mathcal{K}(H).$$
	\end{defn}
	
	Let us state the corresponding strong relative Novikov conjecture as follows. 
	
	\begin{conj}
		Let $h\colon G \to \Gamma$ be a group homomorphism between finitely generated groups. Suppose that $G$ has a $\gamma$-element. Then the $\gamma$-reduced relative Baum-Connes assembly map
		$$\mu_\gamma\colon K_{*}^{G,\Gamma}(\underline{E}G,\underline{E}\Gamma)  \cong K_{*,\gamma}^{G,\Gamma}(\underline{E}G,\underline{E}\Gamma) \to  K_*(C_{\gamma}^*(G,\Gamma))$$
		is injective.
	\end{conj}
	
	Similarly, we have the following rational version. 
	
	\begin{conj}
		Let $h\colon G \to \Gamma$ be a group homomorphism between finitely generated groups. Suppose that $G$ has a rational  $\gamma$-element. Then the $\gamma$-reduced relative Baum-Connes assembly map is rationally injective, that is, 
		$$
		\mu_\gamma \colon K_{*}^{G,\Gamma}(\underline{E}G,\underline{E}\Gamma)\otimes \mathbb{Q}  \cong K_{*,\gamma}^{G,\Gamma}(\underline{E}G,\underline{E}\Gamma) \otimes \mathbb{Q} \to  K_*(C_{\gamma}^*(G,\Gamma))\otimes \mathbb{Q} 
		$$
		is injective.
	\end{conj}
	
	In Section \ref{sec:main}, we shall prove these two conjectures for $h\colon G\to \Gamma$ under the assumption that $G$ has a (rational) $\gamma$-element and $\Gamma$ satisfies the (rational) strong Novikov conjecture. 
	
	\section{The relative Bott map}\label{sec:relativeBott}
	
	In this section, we shall construct relative Bott map from relative $\gamma$-reduced group $C^*$-algebras to relative reduced crossed products.

	Assume that $G$ is a finitely generated group with a (rational) $\gamma$-element. Let us retain the same notation from  Definition \ref{def:gammaelement} and \ref{def:rationalgamma}. We have the following analogue of Lemma \ref{key-lemma}. 
	
	\begin{lem}
		
		Suppose $G$ has a (rational) $\gamma$-element.
		Given  a group homomorphism $h\colon G\rightarrow\Gamma$, there exists a natural asymptotic morphism 
		\begin{equation} \label{eq:homoalphaA}
			h_{\alpha}^\mathcal{A}\colon  C_{\red}^*(G, \Sigma  \mathcal{A}\otimes \mathcal{K}(H))\dashrightarrow C_{\red}^*(\Gamma, \mathcal{K}(H)) 
		\end{equation}
		such that $$h^{\mathcal A}_{\alpha,t}=h^{\mathcal A}_{t} \textup{ on } C_c(G, \Sigma  \mathcal{A}\otimes \mathcal{K}(H)),$$
		where for each $t\in[1,+\infty)$, $h^{\mathcal A}_{t}$ is a map  $C_c(G, \Sigma  \mathcal{A}\otimes \mathcal{K}(H)) \longrightarrow C_c(\Gamma, \mathcal{K}(H))$ given by
		$$h^{\mathcal A}_{t}(\sum a_g\cdot g)=\sum (\alpha_{t}(a_g) \cdot U_g )\cdot h(g).$$
		Here $\alpha$ is the asysmptotic morphism from line \eqref{eq:mapalpha} and $U_g$ stands for the action of $G$ on $H = \oplus_{n=1}^\infty \ell^2(G)$. 
	\end{lem}
	\begin{proof}
		The proof is similar to that of  Lemma \ref{key-lemma}.  The group homomorphism 
		$\widetilde h\colon  G\rightarrow G\times\Gamma$ induces a   $*$-homomorphism (also denoted by $\widetilde h$) $$\widetilde h\colon C_{\red}^*(G,\Sigma  \mathcal{A}\otimes \mathcal{K}(H))\longrightarrow C_{\red}^*(G\times\Gamma, \Sigma  \mathcal{A}\otimes \mathcal{K}(H))$$
		where $G\times\Gamma$ acts on   $\Sigma  \mathcal{A}\otimes \mathcal{K}(H)$ by  $$(g,g^\prime)\cdot a:=g\cdot a,$$ for any $(g,g^\prime)\in G\times\Gamma$ and $a\in \Sigma  \mathcal{A}\otimes \mathcal{K}(H)$. Note that we have the  natural isomorphism
		$$\zeta\colon  C_{\red}^*(G\times\Gamma, \Sigma  \mathcal{A}\otimes \mathcal{K}(H))
		\rightarrow C_{\red}^*(G, \Sigma  \mathcal{A}\otimes \mathcal{K}(H))\otimes C_{\red}^*(\Gamma).$$
		
		Since $\Sigma  \mathcal{A}\otimes \mathcal{K}(H)$ is a proper $G$-$C^*$-algebra,  the maximal crossed product of $\Sigma  \mathcal{A}\otimes \mathcal{K}(H)$ by $G$ coincides with the corresponding reduced crossed product:
		$$\theta \colon  C_{\maximal}^*(G, \Sigma  \mathcal{A}\otimes \mathcal{K}(H))
		\stackrel{\cong}{\longrightarrow}C_{\red}^*(G, \Sigma  \mathcal{A}\otimes \mathcal{K}(H)).$$
		The asymptotic morphism $\alpha$ from line \eqref{eq:mapalpha} induces an asymptotic  morphism 
		$$
		\alpha: C_{\maximal}^*(G, \Sigma  \mathcal{A}\otimes \mathcal{K}(H))
		\dasharrow C_{\maximal}^*(G,\mathcal{K}(H)).
		$$

		We denote by
		$$\pi\colon C_{\maximal}^*(G,\mathcal{K}(H))\rightarrow \mathcal{K}(H)$$ the $*$-homomorphism  induced by  the trivial group homomorphism
		$\pi\colon G\rightarrow \{e\}$.
		
		Now let $h_{\alpha}^\mathcal{A}\colon  C_{\red}^*(G, \Sigma  \mathcal{A}\otimes \mathcal{K}(H))\dashrightarrow C_{\red}^*(\Gamma, \mathcal{K}(H))$ be the asymptotic morphism obtained as the following  composition
		\[\begin{tikzcd}
			C_{\red}^*(G, \Sigma  \mathcal{A}\otimes \mathcal{K}(H)) & C_{\red}^*(G\times\Gamma, \Sigma  \mathcal{A}\otimes \mathcal{K}(H))  \\ 
			& C_{\maximal}^*(G, \Sigma  \mathcal{A}\otimes \mathcal{K}(H)) \otimes C_r^\ast(\Gamma)
			\\
			\mathcal{K}(H) \otimes C_r^\ast(\Gamma) & C_{\maximal}^*(G,\mathcal{K}(H))\otimes C_r^\ast(\Gamma) & 
			\arrow["\widetilde h", from=1-1, to=1-2]
			%\arrow["\beta", dashed, from=1-2, to=1-3]
			%\arrow["=", from=1-3, to=3-3]
			\arrow["\pi\otimes \id "', from=3-2, to=3-1]
			\arrow["\cong", from=1-2, to=2-2]
			\arrow["\alpha\otimes \id ", dashed, from=2-2, to=3-2]
		\end{tikzcd}\]
		It follows by construction that $$h^{\mathcal A}_{\alpha,t}=h^{\mathcal A}_{t} \textup{ on } C_c(G, \Sigma  \mathcal{A}\otimes \mathcal{K}(H)).$$
	\end{proof}

	The asymptotic morphism 
	$$h_{\alpha}^\mathcal{A}\colon  C_{\red}^*(G, \Sigma  \mathcal{A}\otimes \mathcal{K}(H))\dashrightarrow C_{\red}^*(\Gamma, \mathcal{K}(H)) $$ 
	induces a $C^\ast$-homomorphism (still denoted by $h_{\alpha}^\mathcal{A}$) 
	$$h_{\alpha}^\mathcal{A}\colon  C_{\red}^*(G, \Sigma  \mathcal{A}\otimes \mathcal{K}(H)) \longrightarrow \asymalg(C_{\red}^*(\Gamma, \mathcal{K}(H))).$$
	\begin{defn}
		We denote by $C_{\alpha}^*(G,\Gamma, \mathcal{A})$ 
		the mapping cone of $$h_{\alpha}^\mathcal{A}\colon  C_{\red}^*(G, \Sigma  \mathcal{A}\otimes \mathcal{K}(H)) \longrightarrow \asymalg(C_{\red}^*(\Gamma, \mathcal{K}(H))).$$ 
	\end{defn}

	Similar to the discussion of Section \ref{sec:relativeBC}, we have the analogue of the asymptotic morphism 
	\begin{equation*}
		h_{\alpha}^\mathcal{A}\colon  C_{\red}^*(G, \Sigma  \mathcal{A}\otimes \mathcal{K}(H))\dashrightarrow C_{\red}^*(\Gamma, \mathcal{K}(H))
	\end{equation*}
	$$ $$ 
	in terms of Roe algebras and localization algebras with coefficients in $\mathcal A$: 
	\begin{equation}\label{eq:homoalphaARoe}
		h_{\alpha}^\mathcal{A}\colon C_{\red}^*(P_sG, \Sigma  \mathcal{A}\otimes \mathcal{K}(H))^G \dashrightarrow C_{\red}^*(P_s\Gamma, \mathcal{K}(H))^\Gamma
	\end{equation}
	and 
	\begin{equation}\label{eq:localhomoA}
		h_{\alpha}^{L, \mathcal{A}}\colon C_{L}^*(P_sG, \Sigma  \mathcal{A}\otimes \mathcal{K}(H))^G \dashrightarrow C_{L}^*(P_s\Gamma, \mathcal{K}(H))^\Gamma.
	\end{equation}
	Moreover, the following diagram commutes
	\begin{equation*}
		\begin{tikzcd}
			C_{L}^*(P_sG, \Sigma  \mathcal{A}\otimes \mathcal{K}(H))^G\arrow[r, "h_{\alpha,t}^{L, \mathcal{A}}"]\arrow[d,"\ev"]& C_{L}^*(P_s\Gamma, \mathcal{K}(H))^\Gamma\arrow[d,"\ev"] \\
			C_{\red}^*(P_sG, \Sigma \mathcal{A}\otimes \mathcal{K}(H))^G\arrow[r, "h_{\alpha,t}^\mathcal{A}"]& C_{\red}^*(P_s\Gamma, \mathcal{K}(H))^\Gamma\\
		\end{tikzcd}
	\end{equation*}
	for each $t\geq 1$.

		Consider the $C^\ast$-homomorphisms associated to the above asymptotic morphisms: 
		$$h_{\alpha}^\mathcal{A}\colon  C_{\red}^*(P_sG, \Sigma  \mathcal{A}\otimes \mathcal{K}(H))^G \longrightarrow \asymalg(C_{\red}^*(P_s\Gamma, \mathcal{K}(H))^\Gamma)$$
		and
		$$h_{\alpha}^{L,\mathcal{A}}\colon  C_{L}^*(P_sG, \Sigma  \mathcal{A}\otimes \mathcal{K}(H))^G \longrightarrow \asymalg(  C_{L}^*(P_s\Gamma, \mathcal{K}(H))^\Gamma ).$$

		\begin{defn}
			Let 
			$C_{L,\alpha}^{*}(P_sG,P_{s}\Gamma; \mathcal{A})^{G,\Gamma}$ be  the mapping cone of 
			$$ h_{\alpha}^{L,\mathcal{A}}\colon  C_{L}^*(P_sG, \Sigma  \mathcal{A}\otimes \mathcal{K}(H))^G \longrightarrow \asymalg(  C_{L}^*(P_s\Gamma, \mathcal{K}(H))^\Gamma )$$ and $C_{\alpha}^{*}(P_sG,P_{s}\Gamma; \mathcal{A})^{G,\Gamma}$  the mapping cone of  $$h_{\alpha}^\mathcal{A}\colon  C_{\red}^*(P_sG, \Sigma  \mathcal{A}\otimes \mathcal{K}(H))^G \longrightarrow \asymalg(C_{\red}^*(P_s\Gamma, \mathcal{K}(H))^\Gamma)$$
		\end{defn}
		
		\begin{defn}
			We define the \textit{relative equivariant K-homology} with coefficients in $\mathcal{A}$  for  the map $h\colon G\to \Gamma$ to be   $$K_{i,\alpha}^{G,\Gamma}(\underline{E}G,\underline{E}\Gamma; \mathcal{A}):=\lim_{s\rightarrow\infty}K_i(C_{L,\alpha}^{*}(P_sG,P_{s}\Gamma; \mathcal{A})^{G,\Gamma}).$$ Similar to Proposition \ref{prop:Roeiso}, we have 
			$$C_{\alpha}^{*}(P_sG,P_{s}\Gamma;\mathcal{A})^{G,\Gamma}\cong C_{\alpha}^*(G,\Gamma; \mathcal{A})\otimes \mathcal{K}(H).$$
			We define the \textit{relative Baum-Connes assembly map with coefficients in $\mathcal{A}$} to be the homomorphism 
			\begin{equation}\label{eq:assemblycoeff}
				\mu^\mathcal{A}\colon  K_{*,\alpha}^{G,\Gamma}(\underline{E}G,\underline{E}\Gamma;\mathcal{A})\longrightarrow K_*(C_{\alpha}^*(G,\Gamma; \mathcal{A}))
			\end{equation}
			induced by evaluation map 
			$$
			\ev\colon  C_{L,\alpha}^{*}(P_sG,P_{s}\Gamma;\mathcal{A})^{G,\Gamma}\to  C_{\alpha}^{*}(P_sG,P_{s}\Gamma;\mathcal{A})^{G,\Gamma}\cong C_{\alpha}^*(G,\Gamma; \mathcal{A})\otimes \mathcal{K}(H).
			$$
		\end{defn}

		Recall that $C_{{\gamma}}^*(G,\Gamma)$ is the mapping cone algebra associated to the $$h_{\gamma}\colon  \Sigma^2C_{\red}^*(G)\to \asymalg(C_{\red}^*(\Gamma)\otimes  \mathcal{K}(H)) $$ 
		and 
		$C_{L, \gamma}^{*}(P_sG,P_{s}\Gamma)^{G,\Gamma}$ is  the mapping cone of $$
		h^L_{\gamma}:C_{L}^*(P_s(G), \Sigma  ^2)^G \overset{}{\longrightarrow}  \asymalg( C_{L}^*(P_{s}(\Gamma),\mathcal{K}(H))^\Gamma).
		$$
		In the following, we shall introduce the relative Bott maps  
		$$\beta^\mathcal{A}\colon  C_{\gamma}^*(G,\Gamma) \dasharrow C_{\alpha}^*(G,\Gamma; \mathcal{A})$$
		and 
		$$\beta_{L}^\mathcal{A}\colon  C_{L, \gamma}^{*}(P_sG,P_{s}\Gamma)^{G,\Gamma}
		\dasharrow   C_{L,\alpha}^{*}(P_sG,P_{s}\Gamma;\mathcal{A})^{G,\Gamma}.$$
		
		By the above discussion together with that of Section \ref{sec:gamma-groupalgebra},   we have the following commutative  diagram (Figure \ref{fig:diagram1}):
		\begin{figure}[h]
			\begin{tikzcd}
				C_{\red}^*(G, \Sigma^2)  & & C_{\red}^*(G, \Sigma  \mathcal{A}\otimes \mathcal{K}(H))  \\
				& C_{\red}^*(G\times\Gamma, \Sigma^2)  &    \\
				& C_{\red}^*(G\times\Gamma,  \Sigma  \mathcal{A}\otimes \mathcal{K}(H))  \\
				& C_{\red}^*(G, \Sigma  \mathcal{A}\otimes \mathcal{K}(H))\otimes C_{\red}^*(\Gamma) \\
				& C_{\maximal}^*(G, \Sigma  \mathcal{A}\otimes \mathcal{K}(H))\otimes C_{\red}^*(\Gamma) \\
				& C_{\maximal}^*(G,\mathcal{K}(H))\otimes C_{\red}^*\Gamma \\
				& \mathcal{K}(H)\otimes C_{\red}^*(\Gamma)
				\arrow["\cong", from=3-2, to=4-2]
				\arrow["\cong", from=4-2, to=5-2]
				\arrow["{\alpha\otimes \id}", dashed, from=5-2, to=6-2]
				\arrow["{\pi\otimes \id}", from=6-2, to=7-2]
				\arrow["h_{\alpha}^\mathcal{A}", bend left=40, dashed, from=1-3, to=7-2]
				\arrow["\beta", dashed, from=1-1, to=1-3]
				\arrow["{\widetilde h}"', from=1-1, to=2-2]
				\arrow["h_\gamma"', bend right=40, dashed, from=1-1, to=7-2]
				\arrow["\beta", dashed, from=2-2, to=3-2]
				\arrow["{\widetilde h}"', bend left=20, from=1-3, to=3-2]
				%\arrow[from=2-3, to=3-2]
			\end{tikzcd}
			\caption{Asymptotic maps $h_\gamma$ from line \eqref{eq:homogamma} and $h_\alpha^{\mathcal A}$ from line \eqref{eq:homoalphaA}}
			\label{fig:diagram1}
		\end{figure}

		Let us apply the asymptotic morphism $\beta\colon  \Sigma^2  \dasharrow  \Sigma\mathcal{A}\otimes \mathcal{K}(H) $  to construct an asymptotic morphism  
		\[ \beta^{\mathcal A}\colon C_{{\gamma}}^*(G,\Gamma) \dasharrow  C_{\alpha}^*(G,\Gamma; \mathcal{A})\]
		as follows.  For each $(b,f)\in C_{{\gamma}}^*(G,\Gamma)$, where $b\in C_{\red}^*(G, \Sigma^2)$ and $f\in C_0([0,1),\asymalg(C_{\red}^*(\Gamma)\otimes \mathcal{K}(H)))$ such that $h_{\gamma}(b)=f(0)$, we define  
		$$\beta^\mathcal{A}_t(b,f):=(\beta_t(b),\widetilde{f}) \textup{ for each } t\geq 1,$$
		
		where $\widetilde{f}$ is given  by
		$$\widetilde{f}(\theta)=
		\begin{cases}[h_{\alpha,t^\prime+(1-3\theta)(1-t)}^\mathcal{A}(\beta_t(b))]_{t^\prime\geq t}, & \textup{ if }  \theta\in [0,\frac{1}{3}), \\
			\\
			[h_{\alpha,t^\prime}^\mathcal{A}(\beta_{_{(3\theta-1)t^\prime+(2-3\theta)t}}(b))]_{t^\prime\geq t}, &  \textup{ if }  \theta\in [\frac{1}{3},\frac{2}{3}), \\
			\\
			f(3\theta-2)|_{_{t^\prime\geq t} }, &  \textup{ if }  \theta\in [\frac{2}{3},1).
		\end{cases}
		$$
		We also  have commutative diagrams (Figure \ref{fig:diagram2} and \ref{fig:diagram3}).  Similarly, we apply the same formula for $\beta^{\mathcal A}$ above to  define  the corresponding asymptotic morphisms 
		$$\beta^\mathcal{A}\colon  C_{r, \gamma}^{*}(P_sG,P_{s}\Gamma)^{G,\Gamma}
		\dasharrow   C_{\alpha}^{*}(P_sG,P_{s}\Gamma; \mathcal{A})^{G,\Gamma}.$$
		and 
		$$\beta_{L}^\mathcal{A}\colon  C_{L, \gamma}^{*}(P_sG,P_{s}\Gamma)^{G,\Gamma}
		\dasharrow   C_{L,\alpha}^{*}(P_sG,P_{s}\Gamma; \mathcal{A})^{G,\Gamma}.$$
		
		\begin{figure}[h]
			\adjustbox{scale=0.9,center}{
				\begin{tikzcd}
					C_{\red}^*(P_sG, \Sigma  ^2)^G & & C_{\red}^*(P_sG, \Sigma  \mathcal{A}\otimes \mathcal{K}(H))^G  \\
					& C_{\red}^*(P_sG\times P_s\Gamma,\Sigma  ^2)^{G\times\Gamma}  &    \\
					& C_{\red}^*(P_{s}G\times P_s\Gamma,\Sigma  \mathcal{A}\otimes \mathcal{K}(H))^{G\times\Gamma} \\
					& C_{\red}^*(P_{s}G,\Sigma  \mathcal{A}\otimes \mathcal{K}(H))^G\otimes  C_{\red}^*(P_{s}\Gamma)^\Gamma \\
					& C_{\maximal}^*(P_{s}G,\Sigma  \mathcal{A}\otimes \mathcal{K}(H))^G\otimes  C_{\red}^*(P_{s}\Gamma)^\Gamma \\
					& C_{\maximal}^*(P_{s}G,\mathcal{K}(H))^G\otimes C_{\red}^*(P_{s}\Gamma)^\Gamma  \\
					& \mathcal{K}(H)\otimes C_{\red}^*(P_s\Gamma)^\Gamma
					\arrow["\cong", from=3-2, to=4-2]
					\arrow["\cong", from=4-2, to=5-2]
					\arrow["{\alpha\otimes \id}", dashed, from=5-2, to=6-2]
					\arrow["{\pi\otimes \id}", from=6-2, to=7-2]
					\arrow["h_{\alpha}^\mathcal{A}",  bend left=40, dashed, from=1-3, to=7-2]
					\arrow["\beta", dashed, from=1-1, to=1-3]
					\arrow["{\widetilde h}"', from=1-1, to=2-2]
					\arrow["h_\gamma"', bend right=40, dashed, from=1-1, to=7-2]
					\arrow["\beta", dashed, from=2-2, to=3-2]
					\arrow["{\widetilde h}"', bend left=20, from=1-3, to=3-2]
					%\arrow[from=2-3, to=3-2]
				\end{tikzcd}
			}
			\caption{Asymptotic maps $h_\gamma$ from line \eqref{eq:homogammaRoe} and $h_\alpha^{\mathcal A}$ from line \eqref{eq:homoalphaARoe}} 
			\label{fig:diagram2}
		\end{figure}
		\begin{figure}[h]
			\adjustbox{scale=0.9,center}{\begin{tikzcd}
					C_{L}^*(P_sG, \Sigma  ^2)^G & & C_{L}^*(P_sG, \Sigma  \mathcal{A}\otimes \mathcal{K}(H))^G  \\
					& C_{L}^*(P_sG\times P_s\Gamma,\Sigma  ^2)^{G\times\Gamma}  &    \\
					& C_{L}^*(P_{s}G\times P_s\Gamma,\Sigma  \mathcal{A}\otimes \mathcal{K}(H))^{G\times\Gamma} \\
					& C_{L}^*(P_{s}G,\Sigma  \mathcal{A}\otimes \mathcal{K}(H))^G\otimes  C_{L}^*(P_{s}\Gamma)^\Gamma \\
					& C_{\maximal, L}^*(P_{s}G,\Sigma  \mathcal{A}\otimes \mathcal{K}(H))^G\otimes  C_{L}^*(P_{s}\Gamma)^\Gamma \\
					& C_{\maximal, L}^*(P_{s}G,\mathcal{K}(H))^G\otimes C_{L}^*(P_{s}\Gamma)^\Gamma  \\
					& C_{L}^*(pt, \mathcal{K}(H))\otimes C_{L}^*(P_s\Gamma)^\Gamma \\
					& \mathcal{K}(H)\otimes C_{L}^*(P_s\Gamma)^\Gamma
					\arrow["\cong", from=3-2, to=4-2]
					\arrow["\cong", from=4-2, to=5-2]
					\arrow["{\alpha\otimes \id}", dashed,  from=5-2, to=6-2]
					\arrow["{\pi\otimes \id}", from=6-2, to=7-2]
					\arrow["{\ev \otimes \id }", from=7-2, to=8-2]
					\arrow["h_{\alpha}^{L, \mathcal{A}}", bend left=40, dashed, from=1-3, to=8-2]
					\arrow["\beta", dashed, from=1-1, to=1-3]
					\arrow["{\widetilde h}"', from=1-1, to=2-2]
					\arrow["h^L_\gamma"', bend right=40, dashed, from=1-1, to=8-2]
					\arrow["\beta", dashed, from=2-2, to=3-2]
					\arrow["{\widetilde h}"', bend left=20, from=1-3, to=3-2]
					%\arrow[from=2-3, to=3-2]
				\end{tikzcd}
			}
			\caption{The Asymptotic maps $h^L_\gamma$ from line \eqref{eq:localgamma} and $h_\alpha^{L, \mathcal A}$ from line \eqref{eq:localhomoA}} 
			\label{fig:diagram3}
		\end{figure}

				By definitions, the following diagram commutes for all  $t\geq 1$ and $s>0$:
				\begin{equation*}
					\begin{tikzcd}
						C_{L,\gamma}^{*}(P_sG,P_{s}\Gamma)^{G, \Gamma} \ar{r}{{\ev}} \ar{d}{\beta_{t}^{L, \mathcal{A}}}
						&  C_{r,\gamma}^{*}(P_sG,P_{s}\Gamma)^{G,\Gamma} \ar{r}{\cong} \ar{d}{\beta_{t}^\mathcal{A}}& C_{\gamma}^*(G,\Gamma) \otimes \mathcal{K}(H)\ar{d}{\beta^\mathcal{A}_t}\\
						C_{L,\alpha}^{*}(P_sG,P_{s}\Gamma,\mathcal{A})^{G,\Gamma} \ar{r}{{\ev}}&
						C_{\alpha}^{*}(P_sG,P_{s}\Gamma,\mathcal{A})^{G,\Gamma} \ar{r}{\cong}& C_{\alpha}^*(G,\Gamma, \mathcal{A})\otimes \mathcal{K}(H).
					\end{tikzcd}
				\end{equation*}

				\begin{defn}\label{relative-Bott-map}
					The relative Bott map $\beta^\mathcal{A}$ above induces a homomorphism at the $K$-theory level: 
					$$(\beta^\mathcal{A})_\ast \colon  K_i(C_{\gamma}^*(G,\Gamma)) \to   K_i(C_{\alpha}^*(G,\Gamma, \mathcal{A}))$$
					Similarly, the relative Bott map $\beta_L^\mathcal{A}$ induces 
					$$
					(\beta_{L}^\mathcal{A})_*\colon  K_i(C_{L,\gamma}^{*}(P_sG,P_{s}\Gamma)^{G, \Gamma}) \to  K_i(C_{L,\alpha}^{*}(P_sG,P_{s}\Gamma,\mathcal{A})^{G,\Gamma} ),
					$$
					which gives 
					$$(\beta_{L}^\mathcal{A})_* \colon  K_{i,\gamma}^{G,\Gamma}(\underline{E}G,\underline{E}\Gamma) 
					\longrightarrow  K_{i,\alpha}^{G,\Gamma}(\underline{E}G,\underline{E}\Gamma,\mathcal{A})$$
					when taking  the inductive limit over $(P_sG, P_s\Gamma)$ for all $s$. 
				\end{defn}

				Observe that the same discussion above  allows us to use  the asymptotic morphism $\alpha \colon  \Sigma  \mathcal{A}\otimes \mathcal{K}(H)\dasharrow  \mathcal{K}(H)$  to construct an asymptotic morphism  
				\[ \alpha_L\colon C_{L,\alpha}^{*}(P_sG,P_{s}\Gamma,\mathcal{A})^{G,\Gamma}  \dasharrow  C_{L}^{*}(P_sG,P_{s}\Gamma)^{G,\Gamma}. \]
				See Figure \ref{fig:diagram4}. 
				
				\begin{figure}[h]
					\adjustbox{scale=0.9,center}{
						\begin{tikzcd}
							C_{L}^*(P_sG, \Sigma  \mathcal{A}\otimes \mathcal{K}(H))^G & C_{L, \maximal}^*(P_sG, \Sigma  \mathcal{A}\otimes \mathcal{K}(H))^G & C_{L, \maximal }^*(P_{s}G,\mathcal{K}(H))^G  \\
							& C_{L}^*(P_{s}G\times P_s\Gamma,\Sigma  \mathcal{A}\otimes \mathcal{K}(H))^{G\times\Gamma} &    \\
							& C_{L}^*(P_{s}G,\Sigma  \mathcal{A}\otimes \mathcal{K}(H))^G\otimes  C_{L}^*(P_{s}\Gamma)^\Gamma \\
							& C_{L, \maximal}^*(P_{s}G,\Sigma  \mathcal{A}\otimes \mathcal{K}(H))^G\otimes  C_{L}^*(P_{s}\Gamma)^\Gamma \\
							& C_{L,\maximal}^*(P_{s}G,\mathcal{K}(H))^G\otimes C_{L}^*(P_{s}\Gamma)^\Gamma \\
							& C_{L}^\ast(pt, \mathcal{K}(H))\otimes C_{L}^*(P_s\Gamma)^\Gamma \\
							& \mathcal{K}(H)\otimes C_{L}^*(P_s\Gamma)^\Gamma
							\arrow["\cong", from=3-2, to=4-2]
							\arrow["\alpha\otimes \id", dashed, from=4-2, to=5-2]
							\arrow["{\pi\otimes \id}", from=5-2, to=6-2]
							\arrow["{\ev}", from=6-2, to=7-2]
							\arrow["h_{\maximal, L}",  bend left=40,  from=1-3, to=7-2]
							\arrow["\cong",  from=1-1, to=1-2]
							\arrow["\alpha", dashed, from=1-2, to=1-3]
							\arrow["{\widetilde h}"', from=1-1, to=2-2]
							\arrow["h_{\alpha}^{L,\mathcal{A}}"', bend right=40, dashed, from=1-1, to=7-2]
							\arrow["\cong",  from=2-2, to=3-2]
							%\arrow["{\widetilde h}"', bend left=20, from=1-3, to=3-2]
							%\arrow[from=2-3, to=3-2]
						\end{tikzcd} 
					}
					\caption{The asymptotic map $h^{L, \mathcal A}_\alpha$ from line \eqref{eq:localhomoA} and the $C^*$ homomorphism $h_{\maximal, L}$ from line \eqref{eq:localhomo}} 
					\label{fig:diagram4}
				\end{figure}
				
				The following proposition is one of the important ingredients for the proof of our main theorem. 
				
				\begin{prop}\label{prop:bottinjective}
					If $G$ has a (rational) $\gamma$-element, then the relative Bott map
					$$
					(\beta_{L}^\mathcal{A})_\ast\colon  K_{i,\gamma}^{G,\Gamma}(\underline{E}G,\underline{E}\Gamma) \to K_{i,\alpha}^{G,\Gamma}(\underline{E}G,\underline{E}\Gamma,\mathcal{A})
					$$
					is (rationally) injective.
				\end{prop}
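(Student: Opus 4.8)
The plan is to exhibit a one-sided inverse to $\beta^{\mathcal{A}}_{L,*}$ up to the $\gamma$-element, using the dual asymptotic morphism $\alpha$, and then to invoke the five lemma together with the fact---already exploited in the proof of Proposition \ref{samehomology}---that $(\gamma_{_G})_*$ is a (rational) isomorphism. The starting observation is that $\alpha\circ\beta$ is, after reparametrization, asymptotically equivalent to $\gamma_{_G}$ (see Section \ref{sec:gamma-groupalgebra}), so composing the Bott map with a suitable ``relative Dirac map'' built from $\alpha$ should recover the $\gamma_{_G}$-induced relative map.

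First I would construct a relative Dirac map $\alpha^{\mathcal{A}}_{L}$ at the level of mapping cones, mirroring the construction of the Bott map in Definition \ref{relative-Bott-map}. The key point is that the $\alpha$-relative cone $C_{L,\alpha}^{*}(P_sG,P_{s}\Gamma,\mathcal{A})^{G,\Gamma}$ and the standard relative cone (the mapping cone of $\widetilde{h_{_{L}}}\otimes \id$, whose $K$-theory is $K_{*}^{G,\Gamma}(\underline{E}G,\underline{E}\Gamma)$) share the same $\Gamma$-side target $C_{L}^*(P_{s}\Gamma,\mathcal{K}(H))^\Gamma$. Using the asymptotic morphism $\alpha\colon C_{L}^*(P_sG,\Sigma\mathcal{A}\otimes\mathcal{K}(H))^G \dasharrow C_{L}^*(P_sG,\mathcal{K}(H))^G$ on the source together with the identity on the target, and the same three-piece reparametrized homotopy $\widetilde{l}$ as in Definition \ref{relative-Bott-map}, I obtain a homomorphism
$$
\alpha^{\mathcal{A}}_{L,*}\colon K_{*,\alpha}^{G,\Gamma}(\underline{E}G,\underline{E}\Gamma,\mathcal{A}) \longrightarrow K_{*}^{G,\Gamma}(\underline{E}G,\underline{E}\Gamma),
$$
compatible with the inductive system over $s$.

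Next I would show that $\alpha^{\mathcal{A}}_{L,*}\circ \beta^{\mathcal{A}}_{L,*}$ coincides, up to homotopy, with the relative map induced by $\gamma_{_G}$ on the source component and the identity on the target component. Applying the mapping-cone long exact sequence of Lemma \ref{tiangeng1} to the $\gamma$-cone and to the standard relative cone produces a commuting ladder in which the source-component vertical maps are $(\gamma_{_G})_*$ and the target-component vertical maps are identities; this is exactly the ladder appearing in the proof of Proposition \ref{samehomology}. Consequently $\alpha^{\mathcal{A}}_{L,*}\circ \beta^{\mathcal{A}}_{L,*}$ may be identified with the map $\sigma_*$ of that proposition. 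Since $(\gamma_{_G})_*$ is a (rational) isomorphism and the identity is trivially one, the five lemma applied to this ladder shows that $\alpha^{\mathcal{A}}_{L,*}\circ \beta^{\mathcal{A}}_{L,*}$ is a (rational) isomorphism, and in particular (rationally) injective. Therefore $\beta^{\mathcal{A}}_{L,*}$ is (rationally) injective.

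I expect the main obstacle to be the identification in the previous step: verifying rigorously that $\alpha^{\mathcal{A}}_{L,*}\circ \beta^{\mathcal{A}}_{L,*}$ is homotopic to the $\gamma_{_G}$-induced relative map. This requires composing the two three-piece homotopies coming from the definitions of $\beta^{\mathcal{A}}_{L}$ and $\alpha^{\mathcal{A}}_{L}$ and reparametrizing so that the result is a genuine asymptotic morphism of mapping cones asymptotically equivalent to the one built directly from $\gamma_{_G}=\alpha\circ\beta$. Keeping the mapping-cone structure coherent through the reparametrizations, and checking that all these identifications are compatible with the structure maps $P_r(G)\to P_s(G)$ so that the conclusion descends to the inductive limit, is the technical heart of the argument; the five-lemma step itself is then formal.
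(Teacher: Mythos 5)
Your proposal is correct and is essentially the paper's own argument: the paper likewise constructs a Dirac-type map $\alpha_{L,*}$ from the $\alpha$-cone $C_{L,\alpha}^{*}(P_sG,P_{s}\Gamma,\mathcal{A})^{G,\Gamma}$ to the standard relative cone, and applies the five lemma to the ladder of mapping-cone long exact sequences for the composition $\alpha_{L,*}\circ\beta_{L,*}^{\mathcal{A}}$, whose outer vertical maps are $(\alpha\circ\beta)_*=(\gamma_{_G})_*$ (a rational isomorphism) and identities. The only difference is cosmetic: you explicitly identify the composition with the map $\sigma_*$ of Proposition \ref{samehomology}, whereas the paper works at each fixed scale $s$ and applies the five lemma directly to the composed diagram without naming this identification.
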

				\begin{proof} It suffices to prove that 
					$$
					(\beta_{L}^\mathcal{A})_*\colon  K_i(C_{L,\gamma}^{*}(P_sG,P_{s}\Gamma)^{G, \Gamma})\rightarrow K_i(C_{L,\alpha}^{*}(P_sG,P_{s}\Gamma,\mathcal{A})^{G,\Gamma} ).
					$$
					is a (rational) injection for each $s>0$.
					
					Recall that for  a given uniformly continuous  asymptotic morphism  $\varphi\colon A\dasharrow B$ between  separable $C^*$-algebras, we denote  the induced homomorphism $A\to  \asymalg(B)$ still by $\varphi$. We denote by 
					$C_\varphi$ the mapping cone associated to $\varphi \colon A\to  \asymalg(B)$, that is, 
					\[ C_\varphi = \{(a, f) \in A\oplus C_0([0, 1), \asymalg(B)) \mid \varphi (a) = f(0)\}. \]
					In this case, we have the natural injection  $\imath\colon  \Sigma  B\hookrightarrow C_\varphi$   and the natural restriction map  $r\colon C_\varphi \rightarrow A$, cf. Lemma \ref{lm:exact}.
					
					By the definition of the asymptotic morphism $$\beta_{L}^\mathcal{A}\colon  C_{L, \gamma}^{*}(P_sG,P_{s}\Gamma)^{G,\Gamma}
					\dasharrow   C_{L,\alpha}^{*}(P_sG,P_{s}\Gamma,\mathcal{A})^{G,\Gamma}, $$
					we have the following commutative diagram: 
					
					\[\begin{tikzcd}
						K_{i+1}(C_{L}^*(P_sG, \Sigma  ^2)^G)  & K_{i+1}(C_{L}^*(P_sG, \Sigma  \mathcal{A}\otimes \mathcal{K}(H))^G) & K_{i+1}(C_{\maximal, L}^*( P_sG)^G) \\
						K_{i+1}(C_{L}^*(P_{s}\Gamma)^\Gamma) & K_{i+1}(C_{L}^*(P_{s}\Gamma)^\Gamma) & K_{i+1}(C_{L}^*(P_{s}\Gamma)^\Gamma)  \\
						K_i(C_{L,\gamma}^{*}(P_sG,P_{s}\Gamma)^{G, \Gamma}) & K_*(C_{L,\alpha}^{*}(P_sG,P_{s}\Gamma,\mathcal{A})^{G,\Gamma}  ) & K_*(C_{L}^{*}(P_sG,P_{s}\Gamma)^{G,\Gamma}  ) \\
						K_{i}(C_{L}^*(P_sG, \Sigma  ^2)^G ) & K_{i}(C_{L}^*(P_sG, \Sigma  \mathcal{A}\otimes \mathcal{K}(H))^G) & K_{i}(C_{\maximal, L}^*( P_sG)^G)  \\
						K_{i}(C_{L}^*(P_{s}\Gamma)^\Gamma) & 	K_{i}(C_{L}^*(P_{s}\Gamma)^\Gamma) & 	K_{i}(C_{L}^*(P_{s}\Gamma)^\Gamma) 
						\arrow["\beta_*", from=1-1, to=1-2]
						\arrow["\alpha_*", from=1-2, to=1-3]
						\arrow["(h^L_\gamma)_*", from=1-1, to=2-1]
						\arrow["(h^{L, \mathcal A}_\alpha)_*", from=1-2, to=2-2]
						\arrow["(h_{\maximal, L})_*", from=1-3, to=2-3]
						\arrow["=", from=2-1, to=2-2]
						\arrow["=", from=2-2, to=2-3]
						\arrow["\iota_\ast", from=2-1, to=3-1]
						\arrow["\iota_\ast", from=2-2, to=3-2]
						\arrow["\iota_\ast", from=2-3, to=3-3]
						\arrow["(\beta_L^{\mathcal A})_*", from=3-1, to=3-2]
						\arrow["(\alpha_L)_\ast", from=3-2, to=3-3]
						\arrow["r_*", from=3-1, to=4-1]
						\arrow["r_*",from=3-2, to=4-2]
						\arrow["r_*", from=3-3, to=4-3]
						\arrow["\beta_*",from=4-1, to=4-2]
						\arrow["\alpha_*",from=4-2, to=4-3]
						\arrow["(h^L_\gamma)_*", from=4-1, to=5-1]
						\arrow["(h^{L, \mathcal A}_\alpha)_*",from=4-2, to=5-2]
						\arrow["(h_{\maximal, L})_*", from=4-3, to=5-3]
						\arrow["=", from=5-1, to=5-2]
						\arrow["=",from=5-2, to=5-3]
					\end{tikzcd}\]
					where each $\iota_\ast$ is induced by  $\iota$ in the correpsonding mapping cone and similarly  $r_\ast$ is induced by $r$ in the corresponding mapping cone.

					Since by assumption the compositions $\alpha_*\circ\beta_*$ in the first and forth rows  are (rational) isomorphisms, it follows from the five lemma that the composition  $(\alpha_{L})_*\circ (\beta_{L}^\mathcal{A})_*$ from the third row  is also a (rational) isomorphism. Therefore,  $(\beta_{L}^\mathcal{A})_*$ is (rationally) injective. This finishes the proof. 
				\end{proof}

				\section{Main results}\label{sec:main}
				
				In this section, we prove the main results of the paper. 
				
				\begin{thm}\label{main-technical-result}
					Let  $h\colon G\rightarrow \Gamma$ be a group homomorphism between finitely generated groups. 
					If $G$ has a $\gamma$-element (Definition \ref{def:gammaelement}) and $\Gamma$ satisfies the strong Novikov conjecture, then the strong relative Novikov conjecture holds for $h\colon G\rightarrow \Gamma$, i.e.,  the reduced relative Baum-Connes assembly map
					$$
					\mu_\gamma \colon  K_{i}^{G,\Gamma}(\underline{E}G,\underline{E}\Gamma)\rightarrow K_i(C_{\gamma}^*(G,\Gamma))
					$$
					is  injective for $i=0, 1$.
				\end{thm}

				\begin{thm}\label{main-technical-result-rational}
					Let  $h\colon G\rightarrow \Gamma$ be a group homomorphism between finitely generated groups. 
					If $G$ has a rational $\gamma$-element (Definition \ref{def:rationalgamma}) and $\Gamma$ satisfies the rational strong Novikov conjecture,
					then the rational strong relative Novikov conjecture holds, i.e. the reduced relative Baum-Connes assembly map
					$$
					\mu_\gamma \colon  K_{i}^{G,\Gamma}(\underline{E}G,\underline{E}\Gamma)\otimes \mathbb{Q} \rightarrow K_i(C_{\gamma}^*(G,\Gamma))\otimes \mathbb{Q} 
					$$
					is injective for $i=0, 1$. 
				\end{thm}
				
				Tu showed that if  a group $G$ is coarsely embeddable into Hilbert space, then $G$ has a $\gamma$ element \cite{Tu_gamma}. Therefore,  we have the following immediate consequence of Theorem \ref{main-technical-result} and Theorem  \ref{main-technical-result-rational}. 
				
				\begin{thm}\label{main-theorem-body}
					Let  $h\colon G\rightarrow \Gamma$ be a group homomorphism between two countable discrete groups.  If both $G$ and $\Gamma$ are coarsely embeddable  into Hilbert space,
					then the  strong relative Novikov conjecture holds for $(G, \Gamma, h)$, that is, the relative Baum--Connes assembly map
					$$
					\mu_\gamma \colon K_{i}^{G,\Gamma}(\underline{E}G,\underline{E}\Gamma) \rightarrow K_i(C_{\gamma}^{*}(G,\Gamma)) 
					$$
					is  injective for $i=0, 1$. 
				\end{thm}

				Now let us prove Theorem \ref{main-technical-result}. 
				\begin{proof}[Proof of Theorem \ref{main-technical-result}]
					Let us retain the same notation from Section \ref{sec:gamma-groupalgebra}, in particular, Definition \ref{def:gammaelement}. Since $G$ has a $\gamma$-element, it follows from the discussion in the previous sections that we have the following commutative diagram:
					
					\[\begin{tikzcd}
						K_{i}^{G,\Gamma}(\underline{E}G,\underline{E}\Gamma) & K_{i,\gamma}^{G,\Gamma}(\underline{E}G,\underline{E}\Gamma) & K_i(C_{\gamma}^*(G,\Gamma)) \\
						& K_{i,\alpha}^{G,\Gamma}(\underline{E}G,\underline{E}\Gamma; \mathcal{A}) & K_{i,\alpha}^{G,\Gamma}(\underline{E}G,\underline{E}\Gamma,\mathcal{A})
						\arrow["\cong", from=1-1, to=1-2]
						\arrow["\mu_\gamma", from=1-2, to=1-3]
						\arrow["(\beta_{L}^\mathcal{A})_*", from=1-2, to=2-2]
						\arrow["(\beta^\mathcal{A})_*", from=1-3, to=2-3]
						\arrow["\mu^{\mathcal A}", from=2-2, to=2-3]
						\arrow["\mu_\gamma", bend left=30, from=1-1, to=1-3]
					\end{tikzcd}\]
					where $\mu^{\mathcal A}$ is the assembly map from line \eqref{eq:assemblycoeff}. It follows from  Proposition \ref{prop:bottinjective} that $(\beta_{L}^\mathcal{A})_*$ is  injective. Now to show that $\mu$ is  injective, it suffices to show that $\mu^\mathcal{A}$ is  injective.

					Consider the following commutative diagram: 
					\begin{equation*}
						\begin{tikzcd}
							K_{i+1}^{G}(\underline{E}G; \Sigma  \mathcal{A}\otimes \mathcal{K}(H))  \arrow[d,"(h_{\alpha}^{L, \mathcal{A}})_\ast "]\arrow[r,"(\mu_G)^{\mathcal{A}}"]&  K_{i+1}(C_{\red}^*(G; \Sigma  \mathcal{A}\otimes \mathcal{K}(H)))  \arrow[d," (h_{\alpha}^\mathcal{A})_*"]\\
							K_{i+1}^{\Gamma}(\underline{E}\Gamma) \arrow[d,"\imath_*"]\arrow[r,"\mu_{_{\Gamma}}"]&  K_{i+1}(C_{\red}^*\Gamma)\arrow[d,"\imath_*"] \\
							K_{i,\alpha}^{G,\Gamma}(\underline{E}G,\underline{E}\Gamma; \mathcal{A})\arrow[d,"r_*"]\arrow[r,"\mu^{\mathcal{A}}"]  &K_i(C_{\alpha}^*(G,\Gamma;  \mathcal{A}))\arrow[d,"r_*"] \\
							K_{i}^{G}(\underline{E}G; \Sigma  \mathcal{A}\otimes \mathcal{K}(H))   \arrow[d,"(h_{\alpha}^{L, \mathcal{A}})_*"]\arrow[r,"(\mu_{G})^{\mathcal{A}}"]&K_{i}(C_{\red}^*(G; \Sigma  \mathcal{A}\otimes \mathcal{K}(H)))\arrow[d,"(h_{\alpha}^\mathcal{A})_*"] \\
							K_{*}^{\Gamma}(\underline{E}\Gamma)\arrow[r,"\mu_{_{\Gamma}}"]&  K_{*}(C_{\red}^*\Gamma)
						\end{tikzcd}
					\end{equation*}
					where $(\mu_G)^{\mathcal A}$ is the Baum-Connes assembly map for $G$ with coefficients in $\Sigma\mathcal A\otimes \mathcal K(H)$,  $\mu_\Gamma$ is the Baum-Connes assembly map for $\Gamma$, each $\iota_\ast$ is induced by  $\iota$ in the correpsonding mapping cone and similarly  $r_\ast$ is induced by $r$ in the corresponding mapping cone (cf. the proof of Proposition \ref{prop:bottinjective}).  Since $\mathcal A$ is a proper $G$-$C^\ast$ algebra, it follows that 
					the map  $(\mu_{G})^{\mathcal{A}}$ is an isomorphism (cf. Theorem \ref{thm:pGHT}).  Since $\Gamma$ is assumed to be coarsely embeddable into Hilbert space, it follows from a theorem of Yu that  $\mu_{_{\Gamma}}$ is injective \cite{Yu_coa_emb}. Now by the five lemma, the map $\mu^\mathcal{A}$ in the third row
					is injective. This finishes the proof. 
				\end{proof}
				
				The proof of Theorem \ref{main-technical-result-rational} is completely similar to that of Theorem \ref{main-technical-result}
				above, and will be omitted. 
				
				\nocite{MR4300553, tian-thesis, MR4170653, Weinberger-Xie-Yu-additivity-of-higher-rho, Xie-Yu-Higher-invariant-in-NCG, Yu_localization, Skan_Tu_Yu, Lesch-Moscovici-Pflaum_Connes-Chern-charactor, Lei-Lott-Piaz, MR3928083, MR4688091}
				\bibliographystyle{plain}
				\bibliography{ref}

			\end{document}